\newcommand{\bx}{\mathbf{x}}
\newcommand{\md}{\mathrm{d}}
\title{Parallel energy-stable solver for a coupled
Allen--Cahn and Cahn--Hilliard system\thanks{Data: July 14, 2020. Corresponding author:
Chao Yang ({\tt chao\_yang@pku.edu.cn}).} }
\author{Jizu Huang\thanks{LSEC, Academy of Mathematics and Systems Science, Chinese Academy of Sciences, Beijing 100190, China
    and School of Mathematical Sciences, University of Chinese Academy of Sciences, Beijing 100049, China  ({\tt huangjz@lsec.cc.ac.cn}).}
  \and Chao Yang\thanks{School of Mathematical Sciences, Peking University, Beijing 100871, China ({\tt chao\_yang@pku.edu.cn}). }
   \and Ying Wei\thanks{Institute of Software, Chinese Academy of Sciences, Beijing 100190,  China ({\tt weiying14@iscas.ac.cn}).} }
\begin{document}

\maketitle
\begin{abstract}
{ In this paper, we study numerical methods for solving the coupled Allen--Cahn/Cahn--Hilliard system associated with a free energy functional of  logarithmic type. To tackle the challenge posed by the special free energy functional, we propose  a method to approximate the  discrete variational derivatives in polynomial forms, such that the corresponding finite difference scheme is  unconditionally energy stable and the energy dissipation law is maintained. To further improve the performance of the algorithm, a modified adaptive time stepping strategy is adopted such that the time step size  can be flexibly controlled  based on the dynamical evolution of the problem. To achieve high performance on parallel computers, we introduce a domain decomposition based, parallel Newton--Krylov--Schwarz method to solve the nonlinear algebraic system constructed from the discretization at each time step.  Numerical experiments  show that the proposed algorithm is second-order accurate in both space and time,  energy stable with large time steps, and  highly scalable to over ten thousands  processor cores on the Sunway TaihuLight supercomputer.
}
\end{abstract}

\textbf{Key words.}\, Coupled Allen--Cahn/Cahn--Hilliard system, discrete variational derivative
method, unconditionally energy stable scheme, Newton--Krylov--Schwarz, domain
decomposition method

{\bf\indent AMS Subject Classifications:} \,\,74S20, 65Y05

\section{Introduction}

To describe the evolution of coexistent phases in binary alloy systems 
that exhibit simultaneous phase separation and one or
more order-disorder transitions,  it is often of great interest to study the solution of an   Allen--Cahn/Cahn--Hilliard (AC/CH)  system, originally introduced 
by Cahn and Novick--Cohen \cite{Phys94.15}.
The AC/CH system can be obtained by taking the quasi-continuum limits for the free energy defined on the lattice,
\color{black} so that the dynamics of a binary alloy \color{black}  is  reduced to a gradient flow    as follows:
\begin{equation}
\label{eq:1}
\left\{
\begin{aligned}
\frac{\partial {u}}{\partial t} &= \nabla \cdot c(u,v)\nabla\frac{\delta {\cal E}}{\delta {u}},\\
\frac{\partial {v}}{\partial t} &=-\frac{c(u,v)}{\rho}\frac{\delta {\cal E}}{\delta {v}},
\end{aligned}
\right.
\end{equation}
where $u$, $v$ are functions on  $(\mathbf{x},t)\in\Omega\times[0,{\cal T}]$, and $\frac{\delta {\cal E}}{\delta {u}}$, $\frac{\delta {\cal E}}{\delta {v}}$ are the variational 
derivatives \color{black}(in the $L^2$ inner product) \color{black}  of the total free energy functional ${\cal E}$. 
The first equation in \eqref{eq:1} is the Cahn--Hilliard equation \cite{cahn-hilliard}, in which $u$ represents a conserved 
concentration field for the phase separation. The second equation in \eqref{eq:1} is the Allen--Cahn 
equation \cite{allen-cahn}, in which $v$ denotes a non-conserved order parameter for the anti-phase coarsening. 
In the AC/CH system, $c(u,v) = u(1-u)(1/4-v^2)$ is the mobility, which is degenerate at pure phases, and the density~$\rho$ is a positive constant. The total free energy functional can be formulated as
\begin{equation}
\begin{aligned}\label{eq:2}
{\cal E}(u,v)= \int_{\Omega}e(u,v)\md\bx,
\end{aligned}
\end{equation}
where the local free energy is
$$e(u,v)=\frac{\alpha}{2}u(1-u)-\frac{\beta}{2}v^2 
          + \theta(\Phi(u+v)+\Phi(u-v)) + \frac{\gamma}{2}(\lvert \nabla u \rvert^2 + \lvert \nabla v \rvert^2)$$
   and $\Phi(z)=z\textnormal{ln}z + (1-z)\textnormal{ln}(1-z)$. Here positive constants $\gamma$, $\theta$, $\alpha$, 
$\beta$ are  corresponding to the coefficients of the gradient energy, the entropy, 
the nearest neighbors pairwise energetic interactions,
and next-nearest neighbors pairwise energetic interactions, respectively. 
\color{black} We consider periodic 
boundary conditions or the following homogeneous Neumann boundary conditions \cite{Phys94.15}
$$ \mathbf{n}\cdot \nabla u |_{\partial \Omega} =  \mathbf{n}\cdot\nabla v|_{\partial \Omega} = \mathbf{n} \cdot c \nabla \frac{\delta {\cal E}}{\delta u} \Big|_{\partial \Omega} = 0, $$
where $\mathbf{n}$ is the outward normal of $\partial \Omega$.
\color{black}

To solve the AC/CH system (1), many early works  \cite{Barrett1, Barrett2, Millett1, Rokkam1, Xia1} made use of explicit time stepping schemes,
in which very small time steps were usually used 
due to the severe stability  restriction,  
thus making
the explicit methods impractical,
especially for long time simulations. To relax the restriction of time step size, 
an implicit finite element framework together with Jacobian-free Newton Krylov method were introduced to solve the coupled AC/CH system in \cite{Tonks1, Wang1},
in which the free energy defined in \eqref{eq:2} was replaced by simplified  ones, such as the double well free energy functional.
In \cite{yangcai12dd21}, a fully implicit method based on Newton--Krylov--Schwarz (NKS) algorithm was employed to solve the coupled AC/CH system
with the  free energy defined in \eqref{eq:2}.
Those implicit methods can use relatively larger time steps, but no energy stability analysis 
was provided. Therefore, the choice of time step size could easily violate the free-energy dissipation law of 
the AC/CH systems. It is worth pointing out that only one and two-dimensional numerical experiments were conducted 
in the aforementioned works, except for  \cite{Tonks1}. 
Therefore, it is of great interest to study how to efficiently solve the three-dimensional AC/CH system, with a  scheme
that  is energy stable and obeys the free-energy dissipation  law.

As is well known, the AC/CH system can be viewed as a gradient flow driven by a free energy. For gradient flows,
there exist several popular approaches, such as  convex splitting \cite{Baskaran, Elliott, Eyre, Shen1},  stabilization \cite{Jzhu, Shen2}, 
exponential time differencing \cite{LJu},  invariant energy quadratization \cite{XYang, JZhao}, scalar auxiliary variable \cite{Shen3}, and so on.
However, due to the existence of the logarithmic function in the free energy of the AC/CH system, it is quite difficult to extend the above approaches 
to the AC/CH system. Recently, a new approach for gradient flow problems, namely the discrete variational derivative (DVD) method, has been proposed and 
 successfully applied to Cahn--Hilliard, Allen--Cahn, and phase field crystal equations \cite{DVDM, PFCDVD}.
The most important advantage of DVD method is that the numerical scheme, by careful design, is able to keep several important properties 
of the original system, for instance, free-energy dissipation, free-energy conservation, mass conservation, etc.

However,
with the existence of the logarithmic term in the free energy functional, the discrete variational derivatives obtained by
a DVD method will introduce terms in rational forms, of which the denominators are usually very close to zero.  
Due to this difficulty, numerical calculation of the discrete variational derivatives is often unstable and inaccurate. 
As a result, the DVD method is not directly applicable to the AC/CH system.  
In this work, we propose a method to approximate the discrete variational derivatives of the AC/CH system 
so that the numerical instability of the DVD discretization is avoided.
With the approximation, an implicit scheme is constructed, which is unconditionally energy stable and therefore obeys the energy dissipative law. 
\color{black} Here, unconditionally energy stability means that the free energy is nonincreasing in time, regardless of the time step size \cite{BPVollmayr, SMWise}. \color{black}
To solve the large sparse
nonlinear algebraic system arising  at every implicit time step, we present a parallel, highly scalable,
NKS algorithm \cite{NKS}. 
Due to the multiple time scales exhibited by the AC/CH system, using fixed time step size is no longer practical. To that end, we
 propose an adaptive time stepping strategy modified from \cite{PFCDVD, zhang2013}, and verify the efficiency  of the proposed method by a series of experiments.

The remainder of this paper is organized as follows. 
In Sec. 2, an implicit unconditionally energy stable scheme for the AC/CH is constructed. 
In Sec. 3, we introduce the NKS algorithm together with the adaptive time stepping strategy to solve the 
nonlinear system. Experiment results on several two or three dimensional test cases are reported in Sec. 4 
and some concluding remarks are given in Sec. 5.

\section{Discretization for the Allen--Cahn/Cahn--Hilliard system}
First, we rewrite the AC/CH system as
\begin{equation}
\label{system-1}
\frac{\partial \mathbf{U}}{\partial t} = -{\cal A}\mathbf{G},
\end{equation}
 in  which 
$\mathbf{U}=(u,\,v)^T$,  ${\cal A}=\textnormal{diag}(-\nabla \cdot c(u,v)\nabla,{c(u,v)}/{\rho})$,
and $\mathbf{G}:=\frac{\delta {\cal E}}{\delta \mathbf{U}}$ represents the variational derivative of the free 
energy functional ${\cal E}$ with respect to $\mathbf{U}$. 
The local free energy $e(u,v)$ is decomposed into three parts, which are  
\begin{equation}
\label{neq:01}
\begin{aligned}
& e_1(\mathbf{U})= \frac{\alpha}{2}u(1-u)-\frac{\beta}{2}v^2,\\
& e_2 (\mathbf{U})= \theta(\Phi(u+v)+\Phi(u-v)),\\
& e_3 (\nabla \mathbf{U}^T)= \frac{\gamma}{2}(\lvert \nabla u \rvert^2 + \lvert \nabla v \rvert^2).
\end{aligned}
\end{equation}
In \eqref{neq:01}, $e_1+ e_2$ and $e_3$ represent the bulk energy and the interfacial energy, respectively.
The existence of the logarithmic term in $e_2$ is due to an ideal mixing of entropy for binary alloy \cite{Phys94.15}.
Here $u$ and $v$ should satisfy the following 
restrictions: $u\in(0,1)$, $v\in (-\frac12, \frac12)$, and $(u\pm v)\in (0,1)$. 
In the AC/CH system, $\mathbf{G}:=\frac{\delta {\cal E}}{\delta \mathbf{U}}=
\left(\frac{\delta {\cal E}}{\delta u},\,\frac{\delta {\cal E}}{\delta v}\right)^T$ is defined as 
\begin{equation}
\begin{aligned}
&~\frac{\delta {\cal E}}{\delta u} = \frac{\partial e}{\partial u} - \nabla \cdot \frac{\partial e}{\partial (\nabla u)}= -\alpha(u-1/2) + \theta \Phi'(u+v) +\theta \Phi'(u-v) -\gamma \Delta u,\\
&~\frac{\delta {\cal E}}{\delta v} = \frac{\partial e}{\partial v} -\nabla \cdot \frac{\partial e}{\partial (\nabla v)}= - \beta v + \theta \Phi'(u+v) -\theta \Phi'(u-v) -\gamma \Delta v.
\end{aligned}
\end{equation}
It is easy to check that ${\cal A}$ is a semi-positive operator for $u\in(0,1)$, $v\in (-\frac12, \frac12)$. 

By denoting $e_\delta:=e(\mathbf{U}+\delta \mathbf{U},\nabla \mathbf{U}^T + \delta \nabla \mathbf{U}^T)$, 
the following integral relationship between the variational derivative and the free energy holds
\begin{equation}
\label{condG}
\begin{aligned}
&{\cal E}(\mathbf{U}+\delta \mathbf{U},\nabla \mathbf{U}^T + \delta \nabla \mathbf{U}^T) - {\cal E}(\mathbf{U},\nabla \mathbf{U}^T) = \int_\Omega e_\delta - e(\mathbf{U},\nabla \mathbf{U}^T) \md\bx\\ 
&\approx \int_\Omega \left(\frac{\partial e_1}{\partial \mathbf{U}} \cdot \delta \mathbf{U} + \frac{\partial e_2}{\partial \mathbf{U}} \cdot \delta \mathbf{U} + 
 \left(\frac{\partial e_3}{\partial \nabla u}\cdot \delta \nabla u + \frac{\partial e_3}{\partial \nabla v}\cdot \delta \nabla v\right)\right)\md\bx \\
&=\int_\Omega \left(\frac{\partial e_1}{\partial \mathbf{U}} + \frac{\partial e_2}{\partial \mathbf{U}} 
- \gamma(\Delta u,\,\Delta v)^T\right)  \cdot \delta \mathbf{U} \md\bx + B \\
&= \int_\Omega\mathbf{G} \cdot \delta \mathbf{U}\md\bx+B.
\end{aligned}
\end{equation}
The first equality is obtained by using Taylor expansion, and the second equality comes from 
the integration-by-parts formula. Here $B$ represents the boundary terms from the integration-by-parts formula
and equals zero due to the given boundary conditions. Equation~\eqref{condG} shows the connection between the variational derivative 
and the free energy, and  plays an important role in the construction of the energy-stable numerical scheme for the AC/CH system.

The free energy of the AC/CH system satisfies the following equation  
\begin{equation}
\label{DIS1}
\begin{aligned}
  \frac{d}{dt}{\cal E}(\mathbf{U}, \nabla \mathbf{U}^T) &= \frac{d}{dt}\int_\Omega e (\mathbf{U}, \nabla \mathbf{U}^T)\md\bx \\
  &= \int_\Omega \left(\frac{\partial e}{\partial \mathbf{U}}\cdot \frac{\partial \mathbf{U}}{\partial t}+
 \frac{\partial e}{\partial (\nabla u)}\cdot \frac{\partial (\nabla u)}{\partial t}+
\frac{\partial e}{\partial (\nabla v)}\cdot \frac{\partial (\nabla v)}{\partial t}
  \right)\md\bx \\
  &= \int_\Omega\mathbf{G}\cdot\frac{\partial \mathbf{U}}{\partial t} \md\bx + B_1 = - \int_\Omega\mathbf{G}\cdot{\cal A} \mathbf{G}\md\bx + B_1,
\end{aligned}
\end{equation}
where $B_1$ is the boundary terms due to integration-by-parts, which vanishes with the given boundary conditions. 
Since ${\cal A}$ is a semi-positive operator,  it follows that
 $ \int_\Omega\mathbf{G}\cdot{\cal A} \mathbf{G}\md\bx \geq 0$. Thus we have
\begin{equation}
\frac{d}{dt}{\cal E} (\mathbf{U}, \nabla \mathbf{U}^T) \leq 0,
\end{equation}
which demonstrates the energy dissipation of the AC/CH system.

Without loss of generality, consider discretizing the AC/CH system on $\Omega=[0,L_x]\times[0,L_y]$, which is covered by a uniform mesh with mesh sizes 
$\Delta x = L_x/N_x$ and $\Delta y = L_y/N_y$.
The temporal interval $[0, {\cal T}]$ is split by a set of nonuniform points $\{t_n\}_0^{N_{\cal T}}$
with time steps $\Delta t_n=t_{n+1}-t_n$. Denote 
$\tilde{\mathbf{U}}^n_{i,j}\approx \mathbf{U}(x_i, y_j, t_n)$ as the approximate solution of AC/CH system
at the $n$-th time step, where
$(x_i, y_j) = \left((i-\frac{1}{2})\Delta x, (j-\frac{1}{2})\Delta y\right)$, $1\leq i \leq N_x, 1\leq j \leq N_y$. 
Throughout the paper, notations with tilde are the corresponding approximate solutions or functions at the discrete level.
 Let us introduce some useful notations as following 
$$[D_x^+\tilde\phi]^n_{i,j}=\frac{\tilde\phi^n_{i+1,j}-\tilde\phi^n_{i,j}}{\Delta x},\ \ [D_x^-\tilde\phi]^n_{i,j}=\frac{\tilde\phi^n_{i,j}-\tilde\phi^n_{i-1,j}}{\Delta x},
$$
 $$[D_x \tilde\phi]^n_{i,j}=\frac{\tilde\phi^n_{i+\frac{1}{2},j}-\tilde\phi^n_{i-\frac{1}{2},j}}{\Delta x}, \ \ 
 \left[(D_x^\pm\tilde\phi)^2\right]^n_{i,j}=\frac{\left([D_x^+\tilde\phi]^n_{i,j}\right)^2+\left([D_x^-\tilde\phi]^n_{i,j}\right)^2}{2}.$$
$[D_y^+\tilde\phi]^n_{i,j}, [D_y^-\tilde\phi]_{i,j}^n, 
 [D_y\tilde\phi]_{i,j}^n, \left[(D_y^\pm\tilde\phi)^2\right]^n_{i,j}$ are defined similarly. 
\color{black} In what follows, let us denote $\tilde \phi$ and $\tilde \varphi$ as the approximations of scalar value functions $\phi$ and $\varphi$, respectively. \color{black}
We denote the discrete gradient operator as $[\nabla\tilde\phi]_{i,j}^n=([D_x\tilde\phi]_{i,j}^n, [D_y\tilde\phi]_{i,j}^n)^T$.
The Laplacian operator $\Delta$ is discretized by
$[\Delta\tilde\phi]_{i,j}^n=[\nabla \cdot \nabla\tilde\phi]_{i,j}^n$.
The operator $\nabla \cdot c(u,v) \nabla$ in $\cal A$ is discretized by
$[(\nabla \cdot c(u,v) \nabla)\tilde\phi]_{i,j}^n =  [D_x \tilde c D_x \tilde\phi]_{i,j}^n
+ [D_y \tilde c D_y\tilde\phi]_{i,j}^n,$
where the approximated value of the mobility $c$ 
is calculated as
$$\tilde c^n_{i+\frac{1}{2},j} = \frac{c(\tilde u^n_{i+1,j}, \tilde v^n_{i+1,j}) + c(\tilde u^n_{i,j},\tilde v^n_{i,j})}{2}.$$
The discrete operator $\tilde{{\cal A}}$ for the AC/CH system is then defined as $\tilde{{\cal A}}(\tilde\varphi_{i,j}^n,\tilde\phi_{i,j}^n)^T
=(-[\nabla \cdot c(u,v)\nabla\tilde\varphi]_{i,j}^n,\frac{c(u,v)}{\rho}\tilde\phi_{i,j}^n)^T$.
With the periodic boundary conditions or the homogeneous Neumann boundary conditions, 
we present two vital formulas
\begin{equation}
\label{summation}
\begin{aligned}
&-\sum_{i=1}^{N_x}\tilde \varphi^n_{i,j}\left[(D_x \tilde c D_x)\tilde \phi\right]^{n}_{i,j}\color{black}
= \frac1 2\sum_{i=1}^{N_x}\tilde c^n_{i+\frac 12,j}[D_x^+\tilde \varphi]^{n}_{i,j} [D_x^+\tilde \phi]^{n}_{i,j}+\tilde c^n_{i-\frac 12,j}
[D_x^-\tilde \varphi]^{n}_{i,j} [D_x^-\tilde \phi]^{n}_{i,j},\\
&-\sum_{i=1}^{N_y}\tilde \varphi^n_{i,j}\left[(D_y \tilde c D_y)\tilde \phi\right]^{n}_{i,j}\color{black}
=  \frac1 2\sum_{i=1}^{N_y}\tilde c^n_{i,j+\frac 12}[D_y^+\tilde \varphi]^{n}_{i,j} [D_y^+\tilde \phi]^{n}_{i,j}+\tilde c^n_{i,j-\frac 12}
[D_y^-\tilde \varphi]^{n}_{i,j} [D_y^-\tilde \phi]^{n}_{i,j}.
   \end{aligned}
\end{equation}
\color{black}
Next, we prove \eqref{summation} in the case of homogeneous Neumann boundary conditions.
The proof  with periodic boundary conditions can be obtained similarly. 
Let us denote 
$(x_0, y_j) = \left(-\frac{1}{2}\Delta x, (j-\frac{1}{2})\Delta y\right)$ and
$(x_{N_x+1}, y_j) = ((N_x$ $+\frac{1}{2})\Delta x, (j-\frac{1}{2})\Delta y)$ be ghost points outside the computational domain. 
Assume
$\tilde \phi^n_{0,j}\approx \phi(x_0,y_i,t_n)$, $\tilde \phi^n_{N_x+1,j}\approx \phi(x_{N_x+1},y_i,t_n)$.
 The homogeneous Neumann boundary conditions in the $x$-direction are discretized as 
 \begin{equation*}
\begin{aligned}
\tilde \phi^{n}_{1,j}-\tilde \phi^{n}_{0,j}=0,\qquad
\tilde \phi^{n}_{N_x+1,j}-\tilde \phi^{n}_{N_x,j}=0.
   \end{aligned}
\end{equation*}
The first equation of \eqref{summation}  is derived as follows
\begin{equation*}
\begin{aligned}
&-\sum_{i=1}^{N_x}\tilde \phi^n_{i,j}\left[(D_x \tilde c D_x)\tilde \phi\right]^{n}_{i,j}
= 
-\sum_{i=1}^{N_x}\tilde \phi^n_{i,j}\frac{\tilde c^{n}_{i+\frac 1 2,j} \left(\tilde \phi^{n}_{i+ 1 ,j}-\tilde \phi^{n}_{i,j}\right)-
 \tilde c ^{n}_{i-\frac 1 2,j}\left(\tilde \phi^{n}_{i,j}-\tilde \phi^{n}_{i-1,j}\right)
 }{(\Delta x)^2}\\
 &\qquad=\tilde \phi^n_{1,j}\frac{
 \tilde c ^{n}_{\frac 1 2,j}\left(\tilde \phi^{n}_{1,j}-\tilde \phi^{n}_{0,j}\right)
 }{(\Delta x)^2}-\tilde \phi^n_{N_x,j}
 \frac{
 \tilde c ^{n}_{N_x+\frac 1 2,j}\left(\tilde \phi^{n}_{N_x+1,j}-\tilde \phi^{n}_{N_x,j}\right)
 }{(\Delta x)^2}\\
 &\qquad\quad
+\frac 1 2\sum_{i=1}^{N_x-1}\frac{\tilde c^{n}_{i+\frac 1 2,j} \left(\tilde \phi^{n}_{i+ 1 ,j}-\tilde \phi^{n}_{i,j}\right)^2
 }{(\Delta x)^2}+\frac 1 2\sum_{i=2}^{N_x}\frac{\tilde c^{n}_{i-\frac 1 2,j} \left(\tilde \phi^{n}_{i ,j}-\tilde \phi^{n}_{i-1,j}\right)^2
 }{(\Delta x)^2}
\\
&\qquad=\frac1 2\sum_{i=1}^{N_x}\tilde c^n_{i+\frac 12,j}[D_x^+\tilde \phi]^{n}_{i,j} [D_x^+\tilde \phi]^{n}_{i,j}+\tilde c^n_{i-\frac 12,j}
[D_x^-\tilde \phi]^{n}_{i,j} [D_x^-\tilde \phi]^{n}_{i,j}.
   \end{aligned}
\end{equation*}
The summation-by-parts formula in $y$-direction can be obtained similarly. 
\color{black}

\color{black}
It follows from the Cauchy--Schwarz inequality  and \eqref{summation} that
\color{black}
\begin{equation}
\label{neq:02}
\left\{
\begin{aligned}
&\left<\tilde \phi^n_{i,j}[{A}_{11}\tilde\phi]_{i,j}^n\right>\geq 0,\\
&\left<\tilde \varphi^n_{i,j}[{A}_{11}\tilde\phi]_{i,j}^n\right>
=\left<\tilde \phi^n_{i,j}[{A}_{11}\tilde\varphi]_{i,j}^n\right>,
\\
&\left<\tilde \varphi^n_{i,j}[{A}_{11}\tilde\phi]_{i,j}^n\right>
\leq \left< \tilde \phi^n_{i,j}[{A}_{11}\tilde\phi]_{i,j}^n\right>^{1/2}\left< 
\tilde \varphi^n_{i,j}[{A}_{11} \tilde\varphi]_{i,j}^n\right>^{1/2},
\end{aligned}
\right.
\end{equation}
where $A_{11}=-\nabla \cdot c(u,v) \nabla$.
Thus, 
we conclude that the discrete operator $\tilde{\cal A}$ for the AC/CH system is semi-positive.

With the aforementioned notations, the discretizations of the local free energy $e$ and the free energy ${\cal E}$ at time $t_n$ are respectively defined as
\begin{equation}
\label{relations}
\begin{aligned}
\left[\tilde{e}\right]_{i,j}^{n}= &e_1(\tilde u_{i,j}^n,\tilde v_{i,j}^n)+e_2(\tilde u_{i,j}^n,\tilde v_{i,j}^n) +\frac{\gamma}{2}\left(\left[(D_x^\pm\tilde{u})^2\right]^n_{i,j}+\left[(D_y^\pm\tilde{u})^2\right]^n_{i,j}\right)\\
          &+\frac{\gamma}{2}\left(\left[(D_x^\pm\tilde{v})^2\right]^n_{i,j}+\left[(D_y^\pm\tilde{v})^2\right]^n_{i,j}\right).
   \end{aligned}
\end{equation}
and
\begin{equation}
\label{freeenergy}
\begin{aligned}
\tilde{{\cal E}}^{n} :=\left<[\tilde{e}]^n_{i,j} \right> =\sum_{i=1}^{N_x}\sum_{j=1}^{N_y}[\tilde{e}]^n_{i,j} \Delta x \Delta y,
\end{aligned}
\end{equation}
where $\left<\Box_{i,j} \right>$ is defined as $\left<\Box \right>:=\sum_{i=1}^{N_x}\sum_{j=1}^{N_y}\Box_{i,j} \Delta x \Delta y$.

To obtain a full discretization scheme for the AC/CH system,  a discrete form of the variational derivative ${\mathbf{G}}$ is needed. 
By taking $\mathbf{U}(x_i, y_j, t_n) := \tilde{\mathbf{U}}^{n}_{i,j}$ and $\delta \mathbf{U}\big|_{(x_i, y_j)}:= \tilde{\mathbf{U}}^{n+1}_{i,j} - \tilde{\mathbf{U}}^{n}_{i,j}$,  
we obtain  
$
\big(e_\delta - e(\mathbf{U},$ $ \nabla \mathbf{U}^T)\big)\big|_{(x_i, y_j)} :=  [\tilde e]_{i,j}^{n+1}-
[\tilde e]_{i,j}^{n}
$.
We then choose the discrete variational derivative such that the following summation formula exactly holds
\begin{eqnarray}
\label{DVD}
\begin{aligned}
\tilde{{\cal E}}^{n+1}-\tilde{{\cal E}}^{n}&=  \left<[\tilde e]_{i,j}^{n+1}-
[\tilde e]_{i,j}^{n} \right>=\left<  \tilde{\mathbf{G}}_{i,j}  
\cdot \left(\tilde{\mathbf{U}}_{i,j}^{n+1}-\tilde{\mathbf{U}}_{i,j}^{n}\right) \right>,
\end{aligned}
\end{eqnarray}
where $\tilde{\mathbf{G}}_{i,j} $ expresses the discrete 
variational derivative. Equation (\ref{DVD})
can be viewed as a discrete form of  (\ref{condG}).
According to \eqref{neq:01} and \eqref{DVD}, we can finally reduce the discrete variational derivative
to the following form
\begin{equation*}
\tilde{\mathbf{G}} _{i,j} := \mathbf{G}_1(\tilde{\mathbf{U}}_{i,j}^{n+1}, 
\tilde{\mathbf{U}}_{i,j}^{n}) + \mathbf{G}_2(\tilde{\mathbf{U}}_{i,j}^{n+1}, \tilde{\mathbf{U}}_{i,j}^{n}) + \mathbf{G}_3([\nabla \tilde{\mathbf{U}}^T]_{i,j}^{n+1}, 
[\nabla \tilde{\mathbf{U}}^T]_{i,j}^{n}),
\end{equation*}
where $\mathbf{G}_1, \mathbf{G}_3$ are derived from $e_1$ and $e_3$ 
in local free energy, respectively.  Since both $e_1$ and $e_3$ are polynomials,
we can obtain that $\mathbf{G}_1$ and $ \mathbf{G}_3$ are also in polynomial forms as follows
\begin{equation}
\mathbf{G}_1(\tilde{\mathbf{U}}_{i,j}^{n+1}, 
\tilde{\mathbf{U}}_{i,j}^{n}) :=
\left(
\begin{array}{l}
\begin{aligned}
  -\frac\alpha 2\left(\tilde{u}_{i,j}^{n+1}+\tilde{u}_{i,j}^{n}-1\right)\\ 
  -\frac\beta 2\left(\tilde{v}_{i,j}^{n+1}+\tilde{v}_{i,j}^{n}\right)   \end{aligned}
\end{array}\right),
\label{dvds-2}
\end{equation}
\begin{equation}
\mathbf{G}_3([\nabla \tilde{\mathbf{U}}^T]_{i,j}^{n+1}, 
[\nabla \tilde{\mathbf{U}}^T]_{i,j}^{n}):=
\left(
\begin{array}{l}
\begin{aligned}
  -\frac{\gamma}{2} \left(\left[\Delta\tilde{u}\right]_{i,j}^{n+1}+\left[\Delta\tilde{u}\right]_{i,j}^{n}\right)
  \\ 
  -\frac{\gamma}{2} \left(\left[\Delta\tilde{v}\right]_{i,j}^{n+1}+\left[\Delta\tilde{v}\right]_{i,j}^{n}\right)
    \end{aligned}
\end{array}\right).
\label{dvds-1}
\end{equation}

However,  since $e_2$ is not a polynomial of $\mathbf{U}$, we \color{black} cannot  \color{black} choose $\mathbf{G}_2$  as a polynomial such that
\eqref{DVD} exactly holds.
Consider an alternative form of
$\mathbf{G}_2$ as 
\begin{equation}\label{D2}
    \mathbf{G}_2(\tilde{\mathbf{U}}_{i,j}^{n+1}, \tilde{\mathbf{U}}_{i,j}^{n})= \left(
 \begin{aligned}
   \theta\frac{\Phi(p^{n+1}_{ij})-\Phi(p^n_{ij})}{p_{ij}^{n+1}-p_{ij}^n} + \theta\frac{\Phi(q_{ij}^{n+1})-\Phi(q_{ij}^{n})}{q_{ij}^{n+1}
   -q_{ij}^{n}}\\
    \theta\frac{\Phi(p_{ij}^{n+1})-\Phi(p_{ij}^{n})}{p_{ij}^{n+1}-p_{ij}^{n}} - \theta\frac{\Phi(q_{ij}^{n+1})-\Phi(q_{ij}^{n})}{q_{ij}^{n+1}
   -q_{ij}^{n}}
   \end{aligned} \right)
\end{equation}
such that $\mathbf{G}_2(\tilde{\mathbf{U}}_{i,j}^{n+1}, \tilde{\mathbf{U}}_{i,j}^{n})\cdot\left(\tilde{\mathbf{U}}_{i,j}^{n+1}-\tilde{\mathbf{U}}_{i,j}^{n}\right)={e_2(\tilde{\mathbf{U}}_{i,j}^{n+1})-e_2(\tilde{\mathbf{U}}_{i,j}^{n})}$. 
Here $p_{ij}^{n}=\tilde{u}_{ij}^n+\tilde{v}_{ij}^n$ and $q_{ij}^n=\tilde{u}_{ij}^n-\tilde{v }_{ij}^n$.

Using the trapezoidal rule at the half-time level, 
we obtain fully discretized scheme for the AC/CH system as
\begin{equation}
\frac{\tilde{\mathbf{U}}_{i,j}^{n+1} - \tilde{\mathbf{U}}_{i,j}^{n}}{\Delta t_n} = -
\tilde{{\cal A}} \tilde{\mathbf{G}}_{i,j}.
\label{scheme-1}
\end{equation} 
\color{black}
The stability of the proposed scheme  \eqref{scheme-1} is given by the following theorem.
\color{black}
\begin{theorem}
For any given time step $\Delta t_n >0$, the numerical scheme \eqref{scheme-1}
is unconditionally energy stable and the solution 
of scheme~\eqref{scheme-1} satisfies the energy dissipative law
$
\tilde {\cal E}^{n+1}\leq \tilde {\cal E}^n.
$
\end{theorem}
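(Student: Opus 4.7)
The plan is to combine the discrete variational derivative identity \eqref{DVD} with the semi-positivity of $\tilde{\mathcal{A}}$ established through \eqref{summation} and \eqref{neq:02}. Substituting the update rule from the scheme \eqref{scheme-1}, namely $\tilde{\mathbf{U}}^{n+1}_{i,j} - \tilde{\mathbf{U}}^{n}_{i,j} = -\Delta t_n\,\tilde{\mathcal{A}}\tilde{\mathbf{G}}_{i,j}$, directly into the right-hand side of \eqref{DVD}, I would obtain
\begin{equation*}
\tilde{\mathcal{E}}^{n+1} - \tilde{\mathcal{E}}^n \;=\; \left\langle \tilde{\mathbf{G}}_{i,j}\cdot\bigl(\tilde{\mathbf{U}}^{n+1}_{i,j}-\tilde{\mathbf{U}}^{n}_{i,j}\bigr)\right\rangle \;=\; -\Delta t_n\, \left\langle \tilde{\mathbf{G}}_{i,j}\cdot \tilde{\mathcal{A}}\tilde{\mathbf{G}}_{i,j}\right\rangle .
\end{equation*}
The conclusion follows immediately once the inner product on the right is shown to be nonnegative.

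Before carrying out the substitution I would have to confirm that \eqref{DVD} holds exactly for the concrete splitting $\tilde{\mathbf{G}}=\mathbf{G}_1+\mathbf{G}_2+\mathbf{G}_3$. The contributions from $\mathbf{G}_1$ and $\mathbf{G}_2$ are pointwise algebraic identities: $\mathbf{G}_1$ in \eqref{dvds-2} reproduces $e_1(\tilde{\mathbf{U}}^{n+1}_{i,j})-e_1(\tilde{\mathbf{U}}^{n}_{i,j})$ by direct expansion of the quadratic $e_1$, while \eqref{D2} was tailor-made so that $\mathbf{G}_2\cdot(\tilde{\mathbf{U}}^{n+1}_{i,j}-\tilde{\mathbf{U}}^{n}_{i,j})=e_2(\tilde{\mathbf{U}}^{n+1}_{i,j})-e_2(\tilde{\mathbf{U}}^{n}_{i,j})$. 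The gradient term $\mathbf{G}_3$ requires discrete summation by parts against $\tilde{\mathbf{U}}^{n+1}-\tilde{\mathbf{U}}^{n}$, mirroring the continuous computation in \eqref{condG}; with the periodic or homogeneous Neumann data in play, the boundary terms cancel and one is left exactly with the Laplacian form \eqref{dvds-1}.

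Next I would verify that $\langle \tilde{\mathbf{G}}\cdot\tilde{\mathcal{A}}\tilde{\mathbf{G}}\rangle\geq 0$. Writing $\tilde{\mathbf{G}}=(\tilde G^u,\tilde G^v)^T$ and recalling $\tilde{\mathcal{A}}=\mathrm{diag}\!\left(-\nabla\!\cdot c\nabla,\;c/\rho\right)$, the quadratic form decouples into the Cahn--Hilliard piece $\langle \tilde G^u\,[A_{11}\tilde G^u]\rangle$ and the pointwise Allen--Cahn piece $\langle (c/\rho)(\tilde G^v)^2\rangle$. The first is nonnegative by the first line of \eqref{neq:02}, which itself rests on the summation-by-parts identity \eqref{summation} together with the nonnegativity of the edge mobilities; the second is nonnegative because $c(u,v)=u(1-u)(\tfrac14-v^2)\geq 0$ on the admissible range and $\rho>0$. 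Putting these two pieces back into the displayed equation gives $\tilde{\mathcal{E}}^{n+1}-\tilde{\mathcal{E}}^n\leq 0$ for every $\Delta t_n>0$, with no CFL-type restriction.

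The main obstacle I expect is the admissibility issue rather than any algebraic step. For \eqref{D2} to be well defined one needs $p_{ij}^{n+1}\neq p_{ij}^n$, $q_{ij}^{n+1}\neq q_{ij}^n$ and, more importantly, $\tilde u,\tilde u\pm\tilde v\in(0,1)$ and $\tilde v\in(-\tfrac12,\tfrac12)$ at both time levels so that the logarithmic differences are finite and the mobility remains nonnegative. The proof as planned treats this admissibility as a working hypothesis inherited from the construction of the scheme, and the degenerate limits $p\to 0,1$ (or $q\to 0,1$) would be handled by continuous extension using $\Phi'$, so that the identity in \eqref{DVD} and the positivity in \eqref{neq:02} are preserved.
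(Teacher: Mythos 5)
Your proposal is correct and takes essentially the same route as the paper's own proof: substitute the update rule \eqref{scheme-1} into the discrete variational identity \eqref{DVD} and conclude $\tilde {\cal E}^{n+1}-\tilde {\cal E}^n=-\Delta t_n\left<\tilde{\mathbf{G}}_{i,j}\cdot\tilde{{\cal A}}\tilde{\mathbf{G}}_{i,j}\right>\leq 0$ using the semi-positivity of $\tilde{{\cal A}}$ from \eqref{summation} and \eqref{neq:02}. The extra details you supply (the pointwise identities for $\mathbf{G}_1$ and $\mathbf{G}_2$, the discrete summation-by-parts for $\mathbf{G}_3$, and the admissibility caveat, which the paper likewise treats as a working hypothesis in its remark following the theorem) merely make explicit what the paper leaves implicit.
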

\color{black}
\begin{proof}
Similar to the proof of Theorem 2 in \cite{Shin}, according to \eqref{neq:02}, \eqref{DVD},  and \eqref{scheme-1}, 
the discrete free energy induced by scheme \eqref{scheme-1} satisfies
\begin{equation}
\begin{aligned}
   \frac{\tilde {\cal E}^{n+1}- \tilde {\cal E}^n}{\Delta t_n}
   =\left<
 \tilde{\mathbf{G}}_{i,j}  \cdot \frac{\tilde{\mathbf{U}}_{i,j}^{n+1} - \tilde{\mathbf{U}}_{i,j}^{n}}{\Delta t_n}\right>=-\left<
 \tilde{\mathbf{G}}_{i,j}  \cdot 
\tilde{{\cal A}} \tilde{\mathbf{G}}_{i,j}  \right>\leq 0. 
   \end{aligned}
   \label{DF}
\end{equation}
\end{proof}
\color{black}

\color{black}
Since the total free energy functional $\cal E$ is non-convex,
the existence and uniqueness of the solution $U^{n+1}$ for system \eqref{scheme-1} (especially for large values of $\Delta t$) are
not immediate. Nonetheless, the proof of Theorem 2.1 did not require a unique solution to \eqref{scheme-1}.
In fact, 
\color{black}
even if one could prove that scheme \eqref{scheme-1} is  unconditionally energy stable,
it is numerically unstable when applied  to  solve the AC/CH system.
The reason is that the numerical computation 
of fraction ${\xi_1}/{\xi_2}$ is unstable and inaccurate when 
$\xi_2$ is close to zero. Unfortunately, in the AC/CH system, $\delta p_{i,j}=p_{ij}^{n+1}-p_{ij}^n$  or $q_{ij}^{n+1}-q_{ij}^n$
is often close to zero,  which indicates that the numerical calculation of $\mathbf{G}_2$ by \eqref{D2}
is numerically unstable and inaccurate in this situation. As a result, the scheme \eqref{scheme-1} may lead to an inaccurate or non-physical solution 
with the existence of the complicated function $e_2$.
To overcome this difficulty, we propose an approach to calculate  $\mathbf{G}_2$, which is numerically stable and highly accurate. 
In this approach, 
$\Phi(p_{ij}^{n+1})$ and $\Phi(p_{ij}^n)$ are approximately calculated by Taylor expansion
at  the point $\bar {p}_{i,j}:= \frac{p_{i,j}^{n+1}+p_{i,j}^n }2$ for very small $\delta p_{i,j}$. 
Analogous methods can be applied to terms related to $\Phi(q_{ij}^{n+1})$ and $\Phi(q_{ij}^n)$.
The accuracy of the approximation is guaranteed  by the following \color{black} lemma, which can be verified directly from the Taylor expansion. \color{black} 
\begin{lemma}\label{lemma2}
Let us assume $f(x,\sigma) = \frac{(x+\sigma)ln(x+\sigma)-(x-\sigma)ln(x-\sigma)}{2\sigma}$. 
If $\left| \frac \sigma x\right|\ll 1$, we have the following expansion 
\begin{equation}
\label{taylor}
\begin{aligned}
f(x,\sigma) &= \textnormal{ln}(x) - \frac{1}{6} \left(\frac{\sigma}{x}\right)^2 - \cdots 
- \frac{1}{(2S+1)2S} \left(\frac{\sigma}{x}\right)^{2S}+ {\cal O}\left(\frac{1}{S^2}\left(\frac{\sigma}{x}\right)^{2S+2}\right)\\
& := f_S(x,\sigma) + {\cal R}_S.
\end{aligned}
\end{equation}
Here the truncation error ${\cal R}_S\rightarrow 0$ as $S\rightarrow \infty $.

\end{lemma}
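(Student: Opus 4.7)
The plan is to verify the expansion by direct manipulation of the Taylor series for the logarithm, as the lemma's own wording already suggests. First, I would factor out $x$ inside each logarithm: writing $x\pm\sigma=x(1\pm t)$ with $t:=\sigma/x$, one has $\ln(x\pm\sigma)=\ln x+\ln(1\pm t)$, and hence
\[
(x+\sigma)\ln(x+\sigma)-(x-\sigma)\ln(x-\sigma) = 2\sigma\,\ln x + x\bigl[(1+t)\ln(1+t)-(1-t)\ln(1-t)\bigr].
\]
Dividing by $2\sigma$ already isolates the leading $\ln x$ and reduces the lemma to expanding the bracketed quantity in powers of $t$.

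Second, I would substitute $\ln(1+t)=\sum_{k\geq 1}(-1)^{k+1}t^k/k$ into $(1+t)\ln(1+t)$ and read off the coefficient of $t^k$ for $k\geq 2$ as $(-1)^k/[k(k-1)]$. Replacing $t$ by $-t$ gives the analogous expansion of $(1-t)\ln(1-t)$. In the subtraction, the even-order coefficients coincide and cancel, while the odd-order coefficients double in magnitude, leaving
\[
(1+t)\ln(1+t)-(1-t)\ln(1-t) = 2t - 2\sum_{j=1}^{\infty}\frac{t^{2j+1}}{(2j+1)(2j)}.
\]
Multiplying by $x$, dividing by $2\sigma$, and using the identity $x\,t^{2j+1}/\sigma=(\sigma/x)^{2j}$ then reproduces precisely the claimed partial sum $f_S(x,\sigma)$.

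Third, the truncation error is the tail $\mathcal{R}_S=-\sum_{j\geq S+1}(\sigma/x)^{2j}/[(2j+1)(2j)]$. Its leading term is $-(\sigma/x)^{2S+2}/[(2S+3)(2S+2)]$, which is $\mathcal{O}\bigl(S^{-2}(\sigma/x)^{2S+2}\bigr)$; the remaining terms form a geometric-like series in $(\sigma/x)^2$ and, under the hypothesis $|\sigma/x|\ll 1$, are dominated by $(1-(\sigma/x)^2)^{-1}$ times the leading term and thus absorbed into the same $\mathcal{O}$-symbol. This yields $\mathcal{R}_S\to 0$ as $S\to\infty$ at the stated rate.

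I do not anticipate any substantial obstacle. The only point requiring care is the sign bookkeeping in the second step, specifically confirming that the even-order coefficients of $(1+t)\ln(1+t)$ and $(1-t)\ln(1-t)$ match exactly so that they cancel in the difference; the rest is routine term-by-term comparison and a standard geometric tail estimate.
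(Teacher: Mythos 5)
Your route is exactly the one the paper intends: the paper gives no written proof at all beyond the remark that the lemma ``can be verified directly from the Taylor expansion,'' and your first two steps --- factoring $x\pm\sigma=x(1\pm t)$ with $t=\sigma/x$, expanding $(1\pm t)\ln(1\pm t)$, and checking that the even-order coefficients cancel in the difference --- are carried out correctly, as is your geometric tail estimate. In particular the identity
\[
(1+t)\ln(1+t)-(1-t)\ln(1-t)=2t-2\sum_{j=1}^{\infty}\frac{t^{2j+1}}{(2j)(2j+1)}
\]
is right (it follows, e.g., by integrating $\frac{d}{dt}\bigl[(1+t)\ln(1+t)-(1-t)\ln(1-t)\bigr]=2+\ln(1-t^2)$).

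There is, however, a genuine error at the final assembly, which you assert rather than check. Multiplying your bracket by $x/(2\sigma)$ sends the leading term $2t$ to $xt/\sigma=1$, so your computation actually yields
\[
f(x,\sigma)=\ln x+1-\sum_{j=1}^{S}\frac{1}{2j(2j+1)}\left(\frac{\sigma}{x}\right)^{2j}+\mathcal{O}\!\left(\frac{1}{S^{2}}\left(\frac{\sigma}{x}\right)^{2S+2}\right),
\]
i.e., with an extra constant $1$ that is absent from \eqref{taylor}. This constant is not removable: $f(x,\sigma)$ is the centered difference quotient of $g(z)=z\ln z$, so $f(x,\sigma)\to g'(x)=\ln x+1$ as $\sigma\to0$; numerically, $f(1,0.1)\approx 0.99833$, whereas \eqref{taylor} as printed would predict roughly $-0.00167$. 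So the lemma in the paper drops the $+1$ (a misprint, harmless downstream only insofar as the constant contributions cancel in the combinations that assemble $\mathbf{G}_2^S$), and your claim that the algebra ``reproduces precisely the claimed partial sum'' is false as written. A correct proof must either record the expansion with the constant $1$ and flag the discrepancy with \eqref{taylor}, or explicitly correct the statement; silently identifying $\ln x+1-\sum(\cdot)$ with $\ln x-\sum(\cdot)$ is the one step in your argument that fails.
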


With the Taylor expansion \eqref{taylor}, 
$\mathbf{G}_2$ is approximated as
\begin{equation}\label{D2-1}
    \mathbf{G}^S_2(\tilde{\mathbf{U}}_{i,j}^{n+1}, \tilde{\mathbf{U}}_{i,j}^{n})= \left(
 \begin{aligned}
   \theta\phi_{i,j}^S + \theta\psi_{i,j}^S\\
   \theta\phi_{i,j}^S - \theta\psi_{i,j}^S
   \end{aligned} \right):= \mathbf{G}_2(\tilde{\mathbf{U}}_{i,j}^{n+1}, \tilde{\mathbf{U}}_{i,j}^{n})-{{\cal R}}^S_{i,j},
\end{equation}
where  ${\phi}_{i,j}^S$ and  ${\psi}_{i,j}^S$ are given as follows
\begin{equation}
\begin{array}{l}
\begin{aligned}
{ \phi}_{i,j}^S:= f_S(\zeta_p, \sigma_p) + f_S(1-\zeta_p, -\sigma_p),\\
{ \psi}_{i,j}^S:=f_S(\zeta_q, \sigma_q) + f_S(1-\zeta_q, -\sigma_q)  .
  \end{aligned}
\end{array}
\end{equation}
Here $\zeta_p=\frac{{ p}_{i,j}^{n+1}+{p}_{i,j}^{n}}{2}$, $\sigma_p=\frac{{ p}_{i,j}^{n+1}-{ p}_{i,j}^{n}}{2}$, $\zeta_q=\frac{{ q}_{i,j}^{n+1}+{ q}_{i,j}^{n}}{2}$, 
 and $\sigma_q=\frac{{ q}_{i,j}^{n+1}-{ q}_{i,j}^{n}}{2}$.
Since $\left|\frac{\sigma_p}{\zeta_p}\right|\ll 1$, $\left|\frac{\sigma_p}{1-\zeta_p}\right|\ll 1$, $\left|\frac{\sigma_q}{\zeta_q}\right|\ll 1$, and 
$\left|\frac{\sigma_q}{1-\zeta_q}\right|\ll1$, one can obtain ${{\cal R}}^S_{i,j}\rightarrow 0$ as $S\rightarrow\infty$ from Lemma \ref{lemma2}.

With the approximation, 
 scheme \eqref{scheme-1} for the AC/CH system is replaced with
\begin{equation}
\label{NSOA-1}
\frac{\tilde{\mathbf{U}}_{i,j}^{n+1} - \tilde{\mathbf{U}}_{i,j}^{n}}{\Delta t_n} = -
\tilde{{\cal A}} \tilde{\mathbf{G}}_{i,j}^S ,
\end{equation}
where the approximate discrete variational derivative is defined as
\begin{equation}
\tilde{\mathbf{G}}_{i,j}^S := \mathbf{G}_1(\tilde{\mathbf{U}}_{i,j}^{n+1}, 
\tilde{\mathbf{U}}_{i,j}^{n}) + \mathbf{G}^S_2(\tilde{\mathbf{U}}_{i,j}^{n+1}, \tilde{\mathbf{U}}_{i,j}^{n}) + \mathbf{G}_3([\nabla \tilde{\mathbf{U}}^T]_{i,j}^{n+1}, 
[\nabla \tilde{\mathbf{U}}^T]_{i,j}^{n}).
\end{equation}
It should be noted that the solution obtained by using scheme \eqref{NSOA-1} is usually different from the one by scheme \eqref{scheme-1} due to 
the different approximation made by the two schemes.
For  simplicity, we still denote  the solution  of scheme \eqref{NSOA-1} as $\tilde{\mathbf{U}}_{i,j}^{n+1}$.
According to \eqref{DVD} and \eqref{D2-1}, we have 
\begin{eqnarray}
\label{eq:19}
\tilde{{\cal E}}^{n+1}-\tilde{{\cal E}}^{n}=\left<  \tilde{\mathbf{G}}_{i,j}  
\cdot \left(\tilde{\mathbf{U}}_{i,j}^{n+1}-\tilde{\mathbf{U}}_{i,j}^{n}\right) \right>
=\left< \left( \tilde{\mathbf{G}}_{i,j}^{S} +{\cal R}^S_{i,j}\right) 
\cdot \left(\tilde{\mathbf{U}}_{i,j}^{n+1}-\tilde{\mathbf{U}}_{i,j}^{n}\right) \right>.~~~~
\end{eqnarray}
The stability of the proposed scheme  \eqref{NSOA-1} is given by the following theorem.

\begin{theorem}\label{theorem1}
Given 
 $\Delta t_n>0$, scheme~\eqref{NSOA-1} is unconditionally energy stable and the solution 
 satisfies the following energy dissipation relationship with a cut off error
\begin{equation}
\hat {\cal E}^{n+1}\leq \hat {\cal E}^n+\frac{\Delta t_n}{4}\left<
{\cal R}^S_{i,j} \cdot \tilde{{\cal A}} {\cal R}^S_{i,j}\right>.
\label{diss-1}
\end{equation}
Furthermore,  there exists an integer $S_0$ such that  
\begin{equation}
\hat {\cal E}^{n+1}\leq \hat {\cal E}^n,
\label{diss11}
\end{equation}
holds for any $S>S_0$, which implies that the solution of scheme~\eqref{NSOA-1} obeys the energy dissipative law.
\end{theorem}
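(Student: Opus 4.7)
The plan is to follow the same computational recipe as in the proof of Theorem 2.1, but carefully track the remainder ${\cal R}^S_{i,j}$ introduced by the Taylor truncation in \eqref{D2-1}. Starting from identity \eqref{eq:19} and substituting the scheme \eqref{NSOA-1} in the form $\tilde{\mathbf{U}}_{i,j}^{n+1}-\tilde{\mathbf{U}}_{i,j}^{n}=-\Delta t_n\tilde{{\cal A}}\tilde{\mathbf{G}}_{i,j}^{S}$, I would obtain the master identity
\begin{equation*}
\tilde{{\cal E}}^{n+1}-\tilde{{\cal E}}^{n} = -\Delta t_n\left<\bigl(\tilde{\mathbf{G}}_{i,j}^{S}+{\cal R}^S_{i,j}\bigr)\cdot\tilde{{\cal A}}\tilde{\mathbf{G}}_{i,j}^{S}\right>,
\end{equation*}
after which everything reduces to extracting sharp inequalities from the cross term $\left<{\cal R}^S\cdot\tilde{{\cal A}}\tilde{\mathbf{G}}^S\right>$.

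For the cut-off bound \eqref{diss-1}, my plan is to complete the square. Using the symmetry of $\tilde{{\cal A}}$ (which follows from \eqref{summation} together with the observation that the second diagonal block of $\tilde{{\cal A}}$ is pointwise multiplication by $c(u,v)/\rho$), one has
\begin{equation*}
\begin{aligned}
&\left<\tilde{\mathbf{G}}^S\cdot\tilde{{\cal A}}\tilde{\mathbf{G}}^S\right>+\left<{\cal R}^S\cdot\tilde{{\cal A}}\tilde{\mathbf{G}}^S\right>\\
&\qquad=\left<\bigl(\tilde{\mathbf{G}}^S+\tfrac12{\cal R}^S\bigr)\cdot\tilde{{\cal A}}\bigl(\tilde{\mathbf{G}}^S+\tfrac12{\cal R}^S\bigr)\right>-\tfrac14\left<{\cal R}^S\cdot\tilde{{\cal A}}{\cal R}^S\right>.
\end{aligned}
\end{equation*}
Dropping the non-negative squared term on the right, which is legitimate because $\tilde{{\cal A}}$ is semi-positive by \eqref{neq:02}, immediately yields \eqref{diss-1}.

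For the strict dissipation \eqref{diss11}, the idea is that the cut-off penalty can be made arbitrarily small. By Lemma \ref{lemma2}, ${\cal R}^S_{i,j}\to 0$ pointwise as $S\to\infty$, and since $c(u,v)$ is uniformly bounded on the admissible set, also $\left<{\cal R}^S\cdot\tilde{{\cal A}}{\cal R}^S\right>\to 0$. Rather than the symmetric completion above, I would apply Cauchy--Schwarz together with Young's inequality with a free parameter $\epsilon\in(0,1)$ to the cross term, obtaining
\begin{equation*}
\tilde{{\cal E}}^{n+1}-\tilde{{\cal E}}^{n}\le -\Delta t_n(1-\epsilon)\left<\tilde{\mathbf{G}}^S\cdot\tilde{{\cal A}}\tilde{\mathbf{G}}^S\right>+\frac{\Delta t_n}{4\epsilon}\left<{\cal R}^S\cdot\tilde{{\cal A}}{\cal R}^S\right>.
\end{equation*}
When $\tilde{\mathbf{G}}_{i,j}^{S}\equiv 0$ the scheme fixes the state and \eqref{diss11} is trivial; otherwise $\left<\tilde{\mathbf{G}}^S\cdot\tilde{{\cal A}}\tilde{\mathbf{G}}^S\right>$ is strictly positive, so for $S$ beyond some threshold $S_0$ the vanishing cut-off term is dominated and the right-hand side becomes non-positive.

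The hard part will be this final existence statement: $\tilde{\mathbf{G}}^S$ depends on $S$ implicitly through the solution $\tilde{\mathbf{U}}^{n+1}$, so a lower bound on $\left<\tilde{\mathbf{G}}^S\cdot\tilde{{\cal A}}\tilde{\mathbf{G}}^S\right>$ that is \emph{uniform in} $S$ is really needed to produce a single threshold $S_0$. I expect this requires a short compactness/continuity argument on the finite-dimensional implicit system, exploiting that scheme \eqref{NSOA-1} formally reduces to the DVD scheme \eqref{scheme-1} as $S\to\infty$.
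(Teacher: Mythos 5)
Your proposal follows essentially the same route as the paper's own proof: your master identity is the paper's \eqref{eq::27}, your completed square around $\tilde{\mathbf{G}}^S+\frac12{\cal R}^S$ is literally the paper's square around $\tilde{\mathbf{G}}-\frac12{\cal R}^S$ (since $\tilde{\mathbf{G}}=\tilde{\mathbf{G}}^S+{\cal R}^S$), and your Cauchy--Schwarz/Young absorption with the dichotomy on $\left<\tilde{\mathbf{G}}^S\cdot\tilde{{\cal A}}\tilde{\mathbf{G}}^S\right>$ is a reparametrized form of the paper's \eqref{eq::27-1}, which sets $\varrho=\left<\tilde{\mathbf{G}}\cdot\tilde{{\cal A}}\tilde{\mathbf{G}}\right>>0$ and chooses $S_0$ so that $\left<{\cal R}^S\cdot\tilde{{\cal A}}{\cal R}^S\right>\leq\varrho/2$ for $S>S_0$. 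The one point where you go beyond the paper is your closing caveat, and you are right to raise it: $\tilde{\mathbf{G}}^S$ and ${\cal R}^S$ depend on $S$ through the implicit solution $\tilde{\mathbf{U}}^{n+1}$, so a single threshold $S_0$ really requires a uniform-in-$S$ bound, whereas the paper's proof silently treats $\varrho$ as a fixed constant independent of $S$ --- so your attempt is no less complete than the published argument on this step, and your proposed compactness/continuity repair would in fact strengthen it.
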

\begin{proof}
According to  \eqref{NSOA-1}, and \eqref{eq:19},
the discrete free energy decided by scheme \eqref{NSOA-1} satisfies
\begin{equation}\label{eq::27}
\begin{aligned}
   &\frac{\tilde {\cal E}^{n+1}- \tilde {\cal E}^n}{\Delta t_n}
   = \left<
 \tilde{\mathbf{G}}_{i,j} \cdot \frac{\tilde{\mathbf{U}}_{i,j}^{n+1} - \tilde{\mathbf{U}}_{i,j}^{n}}{\Delta t_n} \right>=-\left<
\tilde{\mathbf{G}}_{i,j} \cdot 
\tilde{{\cal A}} \tilde{\mathbf{G}}^S_{i,j}\right>\\
&\qquad=-
\left< \left( \tilde{\mathbf{G}}_{i,j} -\frac 1 2{\cal R}^S_{i,j}\right)  \cdot 
\tilde{{\cal A}} \left(\tilde{\mathbf{G}}_{i,j} -\frac 1 2{\cal R}^S_{i,j}\right) -\frac{1}{4} {\cal R}^S_{i,j} \cdot \tilde{{\cal A}} {\cal R}^S_{i,j}\right>\\
&\qquad  \leq \left<
\frac{1}{4} {\cal R}^S_{i,j} \cdot \tilde{{\cal A}} {\cal R}^S_{i,j} \right>, 
   \end{aligned}
\end{equation}
which completes the proof of \eqref{diss-1}.
Furthermore, from \eqref{eq::27}, we have
\begin{equation}\label{eq::27-1}
\begin{aligned}
   &\frac{\tilde {\cal E}^{n+1}- \tilde {\cal E}^n}{\Delta t_n}
   =-\left<
\tilde{\mathbf{G}}_{i,j}  \cdot 
\tilde{{\cal A}} \tilde{\mathbf{G}}_{i,j}^S\right> =-
\left<
\tilde{\mathbf{G}}_{i,j}\cdot 
\tilde{{\cal A}} \tilde{\mathbf{G}}_{i,j} -
\tilde{\mathbf{G}}_{i,j} \cdot 
\tilde{{\cal A}} {\cal R}_{i,j}^S\right> \\
&\qquad \leq-\left<
\tilde{\mathbf{G}}_{i,j}  \cdot 
\tilde{{\cal A}}\tilde{\mathbf{G}}_{i,j}\right> +\left<
\tilde{\mathbf{G}}_{i,j}  \cdot 
\tilde{{\cal A}} \tilde{\mathbf{G}}_{i,j}\right>^{1/2}\left<
{\cal R}^S_{i,j} \cdot \tilde{{\cal A}} {\cal R}^S_{i,j} \right>^{1/2}.
   \end{aligned}
\end{equation}
If ~$\left<
\tilde{\mathbf{G}}_{i,j}  \cdot 
\tilde{{\cal A}} \tilde{\mathbf{G}}_{i,j}\right>=0$,
 we have $\tilde {\cal E}^{n+1}\leq \tilde {\cal E}^n$. Otherwise, we set
 $\left<
\tilde{\mathbf{G}}_{i,j}  \cdot 
\tilde{{\cal A}} \tilde{\mathbf{G}}_{i,j}\right>=\varrho>0$.
Since ${\cal R}^S_{i,j}\rightarrow 0$ as $S\rightarrow\infty$, there exists an integer $S_0>0$ such that
$\left<
{\cal R}^S_{i,j} \cdot \tilde{{\cal A}} {\cal R}^S_{i,j}\right> \leq \varrho/2$ holds for any $S>S_0$.
Thus, we have $\tilde {\cal E}^{n+1}\leq \tilde {\cal E}^n$, which completes the proof of \eqref{diss11}.
\end{proof}

It is worth noting that the method proposed here can be generalized to a much broader range of \color{black} phase field \color{black} equations,
which can be used to model a gradient flow or more general dissipation mechanism. Following the procedure of this section,
one can further prove that the constructed scheme for the general phase field system is again unconditionally energy
stable. The framework presented in this paper can deal with the local free energy with any complex formulations   
arising from the real applications. \color{black}
This provides an alternative approach to well-known methods such as convex splitting \color{black}
 \cite{Baskaran, Elliott, Eyre, Shen1},  invariant energy quadratization \cite{XYang, JZhao},  and scalar auxiliary variable \cite{Shen3}.

\section{Parallel domain decomposition solver}
\subsection{ Newton--Krylov--Schwarz solver}
Denote $\mathbf{X}^{n}=(u^{n}_{0,0}, v^{n}_{0,0},$ $ u^{n}_{1,0},$ $ v^{n}_{1,0},$ $  u^{n}_{2,0}, v^{n}_{2,0}, \cdots)^{T}$.
By discretizing the AC/CH system with the proposed energy stable scheme \eqref{NSOA-1}, 
a discrete nonlinear system ${\cal F}(\mathbf{X}^{n})=0$ is  
constructed and solved at each time step. We omit the superscript $n$ in the 
remainder of the subsection. We solve the nonlinear system on a parallel supercomputer 
by adopting a NKS type algorithm \cite{NKS}. The NKS algorithm consists of three 
important components: 1) an inexact Newton method as the outer iteration; 2) a Krylov method as an 
inner iteration for the linear Jacobian system at each Newton iteration; and 3) a Schwarz preconditioner 
to improve the convergence of the Krylov method.

At each time step, the solution of the previous time step is used as  the initial guess 
for the Newton iteration. At the $(m+1)$-th iteration of the inexact Newton method, the new solution 
$\mathbf{X}_{m+1}$ is obtained from the current solution $\mathbf{X}_{m}$ through
\begin{equation}
\label{newton}
\mathbf{X}_{m+1}=\mathbf{X}_{m} +\lambda_m\mathbf{S}_m,\ \ \ m=0,1,\cdots .
\end{equation}
 Here $\lambda_m$ is the step length determined by a line search procedure \cite{NMFU}, and 
$\mathbf{S}_m$ is the search direction obtained by solving a Jacobian system. 
The stopping condition for the Newton iteration \eqref{newton} is 
\begin{equation}
\|{\cal F}(\mathbf{X}_{m+1})\| \leq \max\{\varepsilon_r
\|{\cal F}(\mathbf{X}_0)\|, \varepsilon_a\},
\end{equation}
where $\varepsilon_r, \varepsilon_a \geq 0$ are the relative and absolute tolerances for the 
nonlinear iteration, respectively.

In \eqref{newton}, the search direction $\mathbf{S}_m$ is obtained by approximately solving 
the following right-preconditioned linear Jacobian system
\begin{equation}
\label{hjacobi}
J_mH_m^{-1}(H_m\mathbf{S}_m) = -{\cal F}(\mathbf{X}_m),
\end{equation}
where $J_m=\frac{\partial {\cal F}(\mathbf{X}_m)}{\partial \mathbf{X}_m}$ is the Jacobian matrix, 
and $H_m^{-1}$ is the additive Schwarz type preconditioner. 
In our study, a restarted Generalized Minimal Residual (GMRES) method \cite{gmres} is applied to approximately 
solve the right-preconditioned linear system \eqref{hjacobi}
until the linear residual $\mathbf{r}_m = J_m\mathbf{S}_m + {\cal F}(\mathbf{X}_m)$ satisfies 
the stopping condition
$$\|\mathbf{r}_m\| \leq  \textnormal{max} \{\xi_r\|{\cal F}(\mathbf{X}_m)\|, \xi_a\},$$
where $\xi_r, \xi_\alpha \geq 0$ are the relative and absolute tolerances for the linear iteration, respectively. 
In the GMRES method, the additive Schwarz type preconditioner $H_m^{-1}$ is the key to 
the success of the linear solver. To define $H_m^{-1}$, we first partition the computational domain $\Omega$ 
into $np$ non-overlapping 
subdomains $\Omega_k,\ (k = 1, 2, \cdots, np)$, then extend each subdomain by $\delta$ mesh 
layers to form an overlapping decomposition $\Omega =\cup_{k=1}^{np}\Omega_k^{\delta}$.

The classical additive Schwarz preconditioner 
\cite{DDA} is defined as
\begin{equation}
H_m^{-1}(\delta\delta)=\sum_{k=1}^{np}(R_k^{\delta})^T \textnormal{inv}(A^m_k)R_k^{\delta}.
\label{invM}
\end{equation}
Here the restriction matrix $R_k^{\delta}$ maps a vector to a new one that is defined in the 
subdomain $\Omega_k^{\delta}$, by discarding the components outside $\Omega_k^{\delta}$; 
the extension  matrix $(R_k^{\delta})^T$ maps a vector defined in the 
subdomain $\Omega_k^{\delta}$ to a new one that is defined in the whole domain, by putting zeros at the components 
outside $\Omega_k^{\delta}$. 
\color{black}In \eqref{invM}, \color{black} $A^m_k=R_k^{\delta}J_m(R_k^{\delta})^T$ is the subdomain matrix.
We calculate the matrix-vector multiplication with $\textnormal{inv}(A^m_k)$ by a sparse LU factorization or 
incomplete LU (ILU) factoriztion.

There are two popular modifications of the AS preconditioner that may have some
potential advantages,  the left restricted additive Schwarz (left-RAS, \cite{cai99sisc}) 
preconditioner and the right restricted  additive Schwarz (right-RAS, \cite{cai03sinum_ash}) 
preconditioner.
Compared to the classical AS preconditioner, the communication in the two restricted 
versions is reduced approximately by half because only one side of restriction or extension step requires 
communication. Many experiment results have shown  that the restricted
Schwarz preconditioners is generally superior to classic AS preconditioners \cite{cai99sisc, PFCDVD}. This may further improve 
the performance of the preconditioner.

\subsection{An adaptive time stepping strategy}

Theorem 2.3 shows the unconditional stability property of the implicit scheme \eqref{NSOA-1}. 
But an abrupt increase of the time step size is adverse for keeping the computational accuracy. 
Numerical experiments show that simulations with a large constant time step may produce nonphysical solutions 
\cite{zhang2013}. This is because the AC/CH system contains 
multiple time scales that may vary in orders of magnitude during the phase separation and order-disorder transitions. Therefore an adaptive control of the time step size is necessary, 
in which the time step size is selected based on the desired solution accuracy and the dynamic 
features of the system. 

To deal with the multiple time scales, we begin with the introduction of  the adaptive time stepping strategies described in \cite{PFCDVD, zhang2013}, in which 
the initial time step size $\Delta t_0$ is set as $\Delta t_{\min}$ and the time step size at the $(n+1)^{\mathrm{th}} $ time step is predicted to
\begin{equation}
\label{sencondt}
\tilde{\Delta} t_n= \max \left(\Delta t_{\min},\frac{\Delta t_{\max}}{\sqrt{1+\eta \mathbf{X}_d'(t_n)^2}} \right),
\end{equation}
where $\mathbf{X}_d'(t_n) =\frac{\|\mathbf{X}^{n}-\mathbf{X}^{n-1}\|}{\Delta t_{n-1}}$ 
corresponds to the change rate of numerical solutions on the two previous time steps, 
and $\eta$ is a positive pre-chosen parameter. 
Then we use the NKS algorithm  to solve the discrete system \eqref{NSOA-1} with the predicted time step size $\tilde{\Delta} t_n$. If the NKS solver diverges 
with the currently predicted time step size $\tilde{\Delta} t_n$, a smaller predicted time step size $\tilde{\Delta} t_n:=\tilde{\Delta} t_n/\sqrt 2 $ is chosen to restart the NKS algorithm.
The loop is broken down and the time step size $\Delta t_n$ is set to be $\tilde{\Delta} t_n$ until the NKS solver converges. 
In \eqref{sencondt}, $\Delta t_{\max}$ and $\Delta t_{\min}$  are defined as the upper and lower bounds of the time step size, 
namely $\Delta t_{\min}\leq\Delta t_n \leq \Delta t_{\max}$.
However, in the simulations of the AC/CH system, a directly using of the adaptive time stepping strategy \eqref{sencondt}
with a pre-chosen parameter $\eta$ will be low efficient. To improve, we
 initially set $\eta$ as a relatively small value, and then double the value of $\eta$  when the NKS solver diverges. 
Numerical simulations carried out in the Section 4 show the efficiency of the modification.

\section{Numerical experiments}
In this section, we investigate the numerical behavior and parallel performance of the proposed algorithm 
for the AC/CH system~\eqref{eq:1}. We carry out several two and three dimensional 
tests to validate the discretization 
of the proposed algorithm. Various performance related parameters in the NKS algorithm are 
 studied as well. 
We mainly focus on: 1) the verification of the accuracy of the proposed energy stable method, 2) a comparison of different preconditioners and subdomain solvers, 
3) the performance of the adaptive time stepping strategy, and 4) the parallel scalability of the proposed algorithm.

The numerical experiments are performed on the Sunway TaihuLight supercomputer, 
ranking the third place in the TOP--500 list as of November, 2019.
The computing power of TaihuLight is provided by a Chinese  homegrown many-core SW26010 CPU \cite{Fu2016}, 
in which we only enable one core per CPU for the current study. The algorithm for the AC/CH system is implemented on top of the Portable, Extensible Toolkits for 
Scientific computations (PETSc, \cite{petsc}) library. In the approximation scheme \eqref{taylor}, 
we set $S=10$  such that the error coming from the Taylor approximation can 
be ignored. The stopping conditions for the nonlinear and linear 
iterations are set follows. 
\begin{itemize}
\setlength{\itemsep}{0pt}
\item The relative tolerance for the nonlinear iteration: $\varepsilon_r = 1\times10^{-6}$.
\item The absolute tolerance for the nonlinear iteration: $\varepsilon_a = 1\times10^{-13}$.
\item The relative tolerance for the linear iteration: $\xi_r = 1\times10^{-8}$.
\item The absolute tolerance for the linear iteration: $\xi_a = 1\times10^{-14}$.
\end{itemize}

\subsection{Validation of the energy stable scheme}
~~

\vspace{0.2cm}
\noindent A. Two dimensional tests
\vspace{0.2cm}

In order to study the convergence behavior of the proposed energy stable scheme (2.20), we consider a two dimensional problem with periodic boundary conditions and the following initial conditions
 \begin{equation}
 \begin{aligned}
 &~u^{(0)}(x,y) = 0.4\left(\sin^2\left(2\pi x\right)+\cos^2\left(2\pi y\right)\right) + 0.1,\ \ (x,y)\in\Omega,\\
 &~v^{(0)}(x,y) = 0.4\left(\sin^2\left(2\pi x\right)-\cos^2\left(2\pi y\right)\right),\ \ (x,y)\in\Omega,
 \end{aligned}
 \end{equation}
where $\Omega=[0,1]^2$. The parameters are set as: $\alpha = 4$, $\beta = 2$, $\gamma=0.005$, $\theta = 0.1$, $\rho =0.001$. Since the exact solutions $u, v$ of the AC/CH system are unknown, the numerical solutions on a  
fine mesh $1,024 \times 1,024$ with small time step size $\Delta t = 5\times 10^{-5}$ are taken as the reference
solutions $\bar{u}^n_{i,j}, \bar{v}^n_{i,j}$. 
We define the relative $l_2$ error between the numerical solutions and the reference solutions as 
\begin{equation}
l_2 = \left(\frac{\sum_{i,j}\left((\tilde{u}^n_{i,j}-\bar{u}^n_{i,j})^2 + (\tilde v^n_{i,j}-\bar{v}^n_{i,j})^2\right)}{\sum_{i,j}((\bar{u}^n_{i,j})^2+(\bar{v}^n_{i,j})^2) }\right)^{\frac{1}{2}}.
\end{equation}
We plot the $l_2$ errors of the numerical solutions with respect to the changes of the spatial mesh resolution and the time step size in Fig.~\ref{ex1:accuracy} (a) and 
Fig.~\ref{ex1:accuracy} (b), respectively. As shown in the figures, the energy stable scheme~\eqref{NSOA-1} exhibits second-order accuracy in both space and time.

 \begin{figure}[!h]
\begin{center}
\qquad\scriptsize{(a)}\qquad\qquad\qquad\qquad\qquad\qquad\qquad\qquad\qquad
\qquad\qquad\qquad\scriptsize{(b)}\\
{\includegraphics[width=0.495\textwidth]{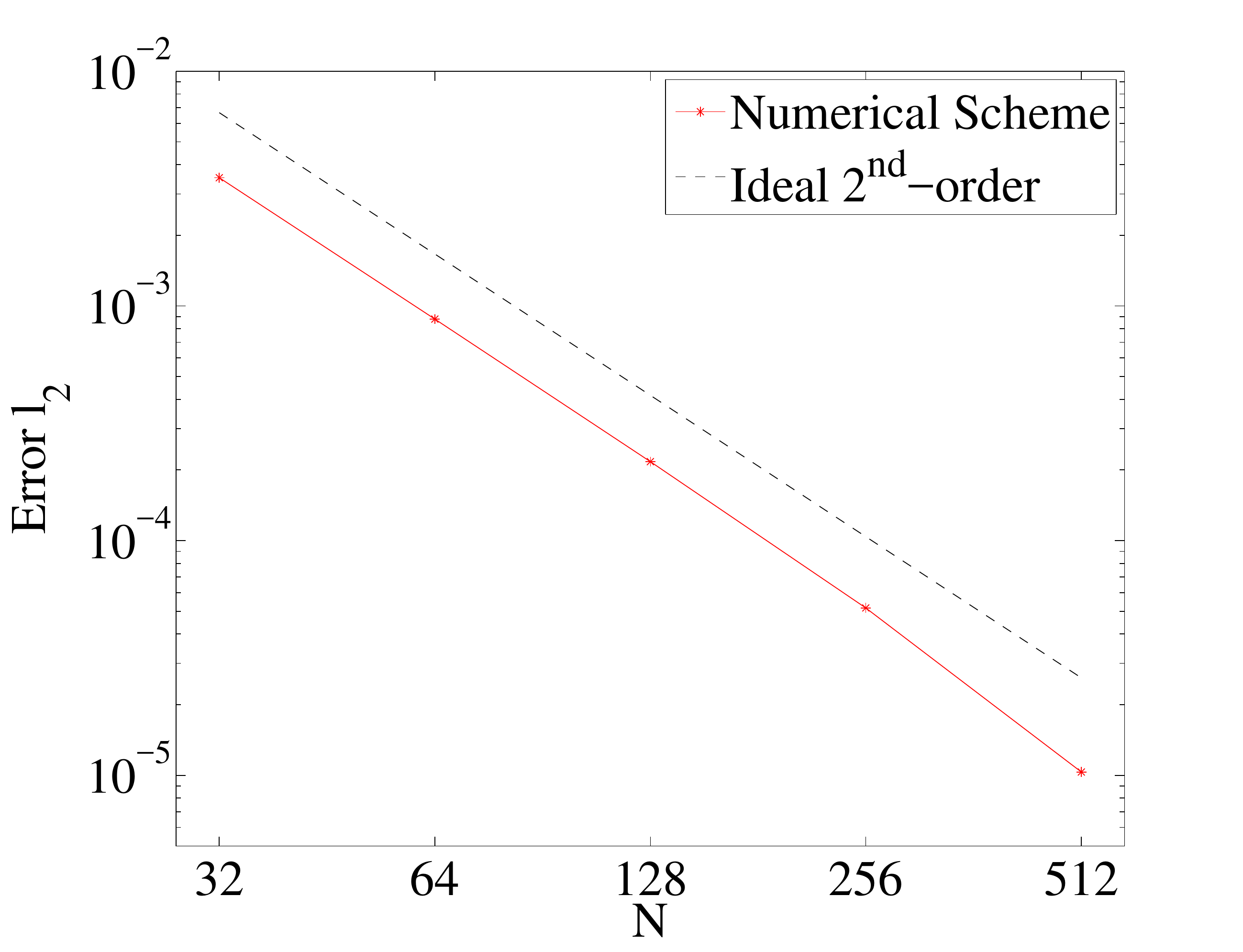}}
{\includegraphics[width=0.495\textwidth]{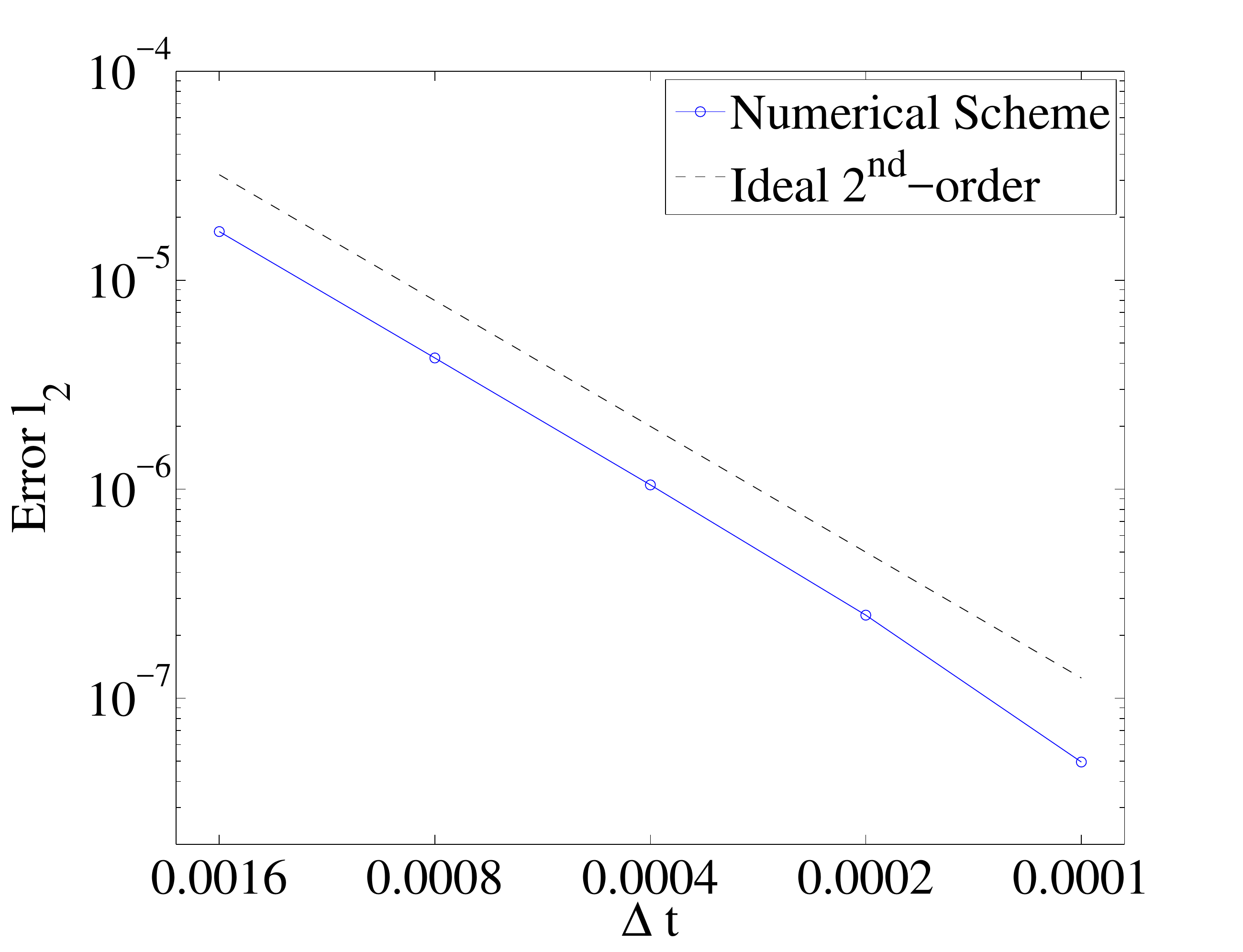}}
\end{center}
\caption{Convergence rates of the energy stable scheme in space (a) and time (b). The time step sizes $\Delta t$ of all simulations in (a) are fixed as $5\times 10^{-5}$, and
the spatial mesh $1,024\times 1,024$ is used in (b) for all tests. \color{black} In (a), we take $N_x=N_y=N$. \color{black} }
\label{ex1:accuracy} 
\end{figure}

Next we consider a two dimensional test case previously  studied in \cite{Xia1}. The initial condition 
is set as $(u^{(0)}, v^{(0)}) = (0.55 + \delta_u, \delta_v)$, 
where 􏰋$\delta_u$ and $\delta_v$ are uniform random distributions in $-0.05$ to $0.05$. 
The parameters are set to  $\alpha = 4$, $\beta = 2$, $\gamma=0.005$, $\theta = 0.1$, $\rho =0.001$, and $S=10$. 
The computational domain is $[0,1]^2$, and we run the test case on a $128\times 128$ mesh with an initial time step 
$\Delta t_1 = 10^{-4}$. The time step size is then adaptively controlled with $\eta=10^4$, $\Delta t_{\min}=10^{-4}$, and 
$\Delta t_{\max} = 10$. 

\begin{figure}[!t]
\begin{center}
\scriptsize{(a1) $t = 1$}\qquad\qquad\qquad\qquad\scriptsize{(a2) $t = 3$}
\qquad\qquad\qquad\qquad\scriptsize{(a3) $t = 6$}
\qquad\qquad\qquad\qquad\scriptsize{(a4) $t = 10,000$}\\
{\includegraphics[width=0.03\textwidth]{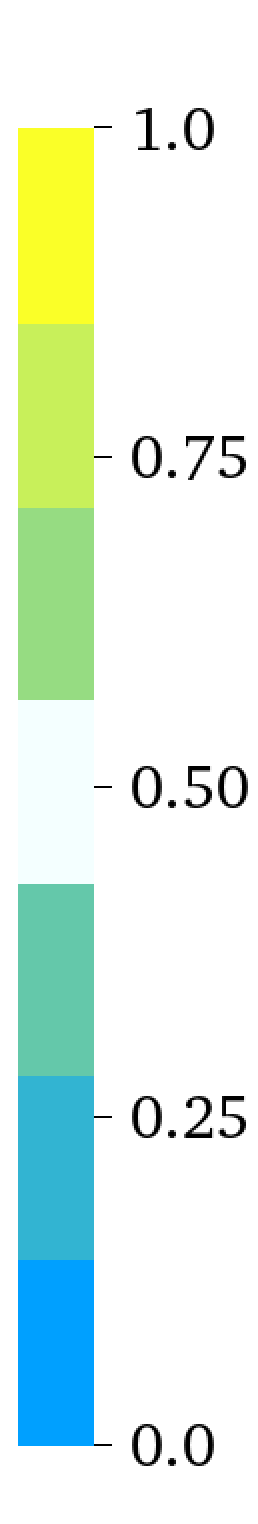}}
{\includegraphics[width=0.23\textwidth]{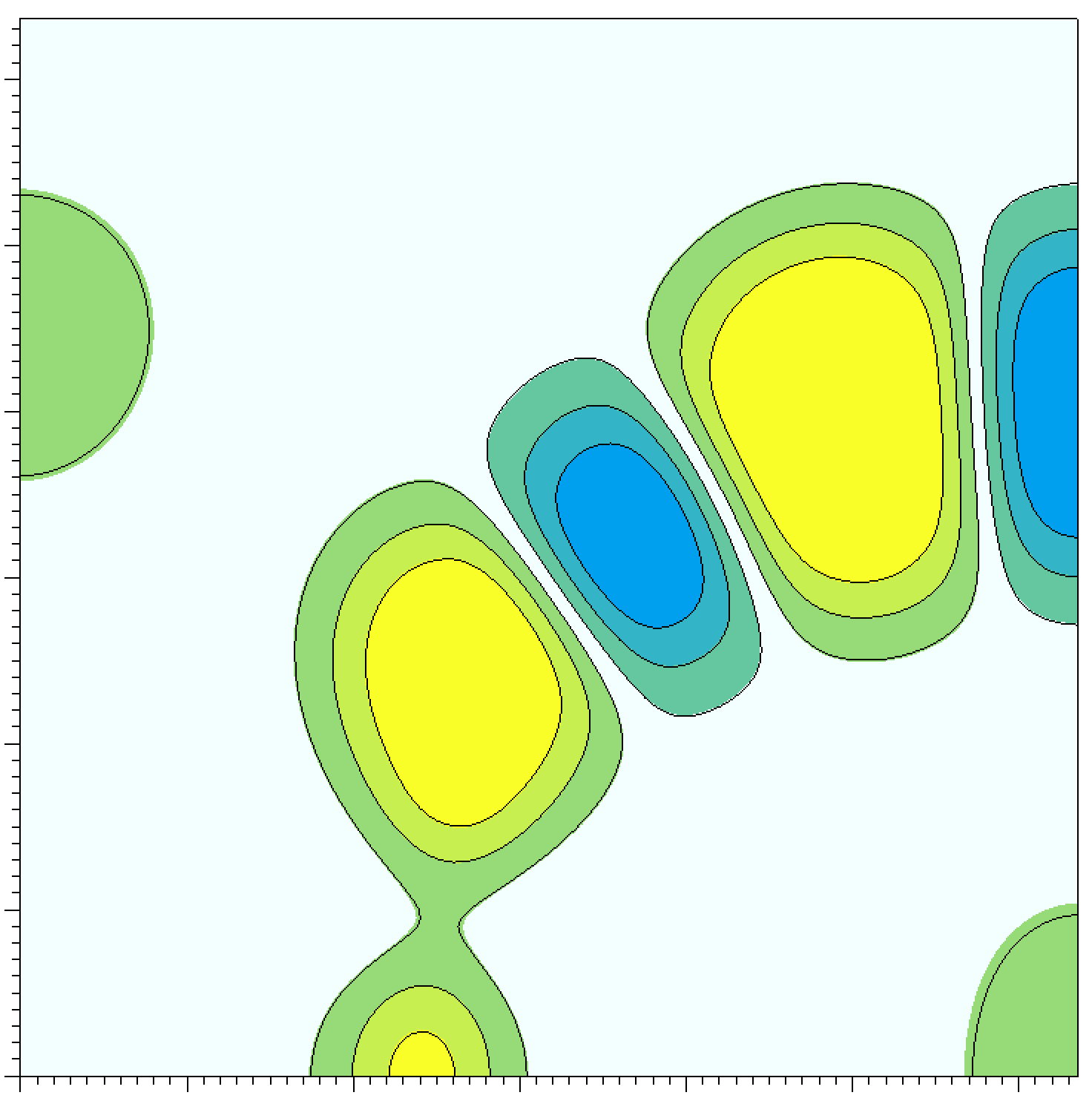}}
{\includegraphics[width=0.23\textwidth]{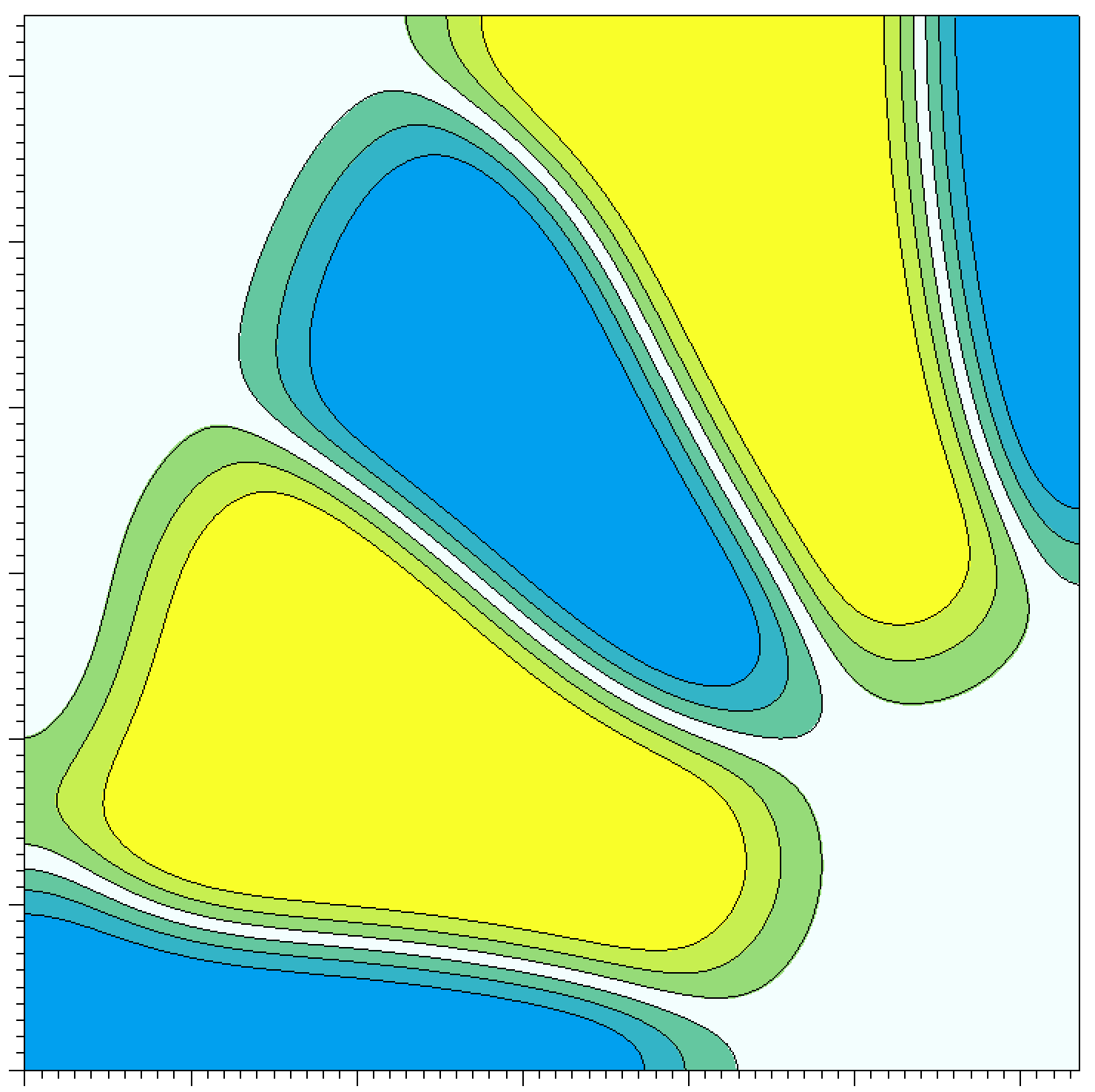}}
{\includegraphics[width=0.23\textwidth]{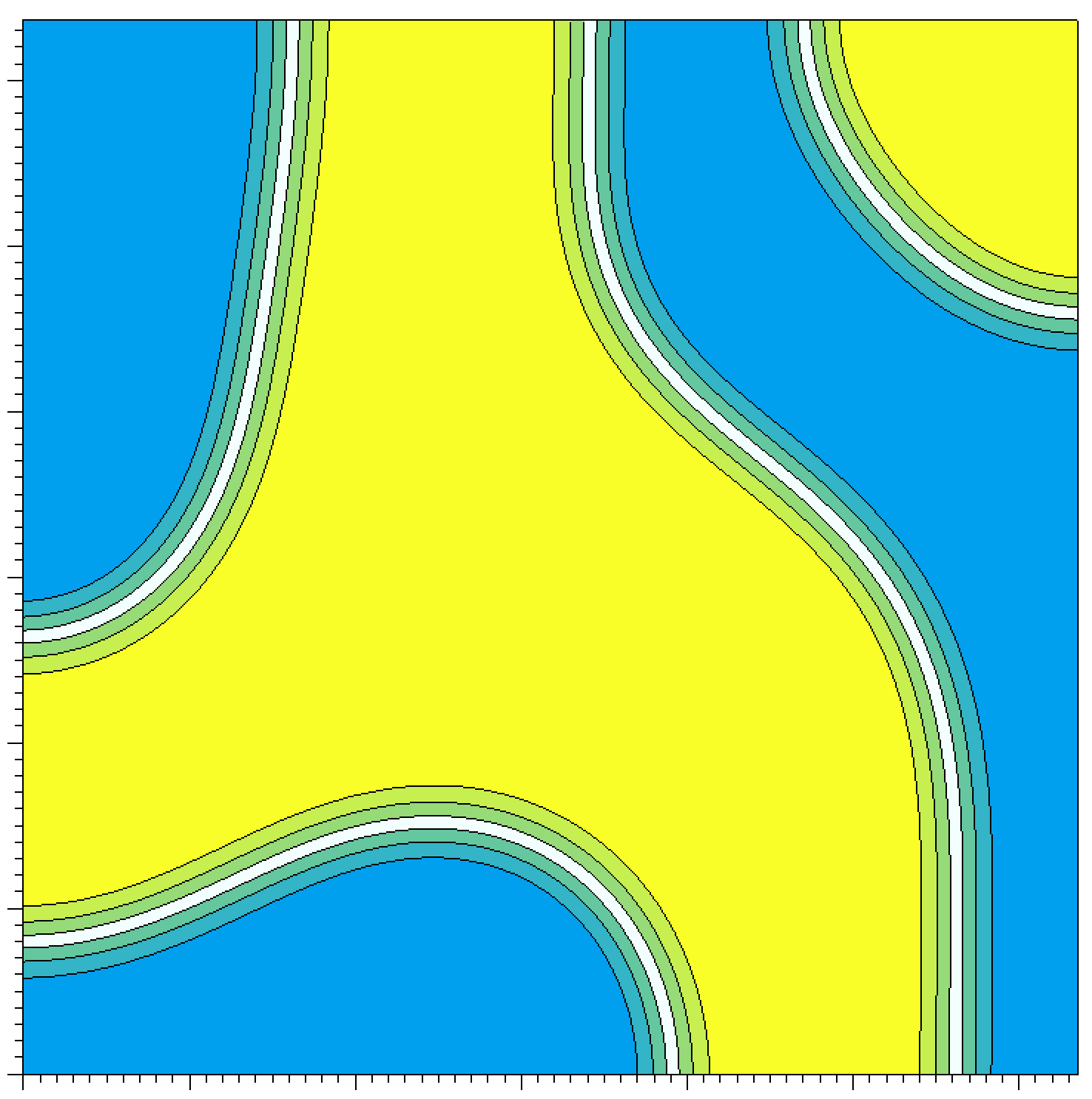}}
{\includegraphics[width=0.23\textwidth]{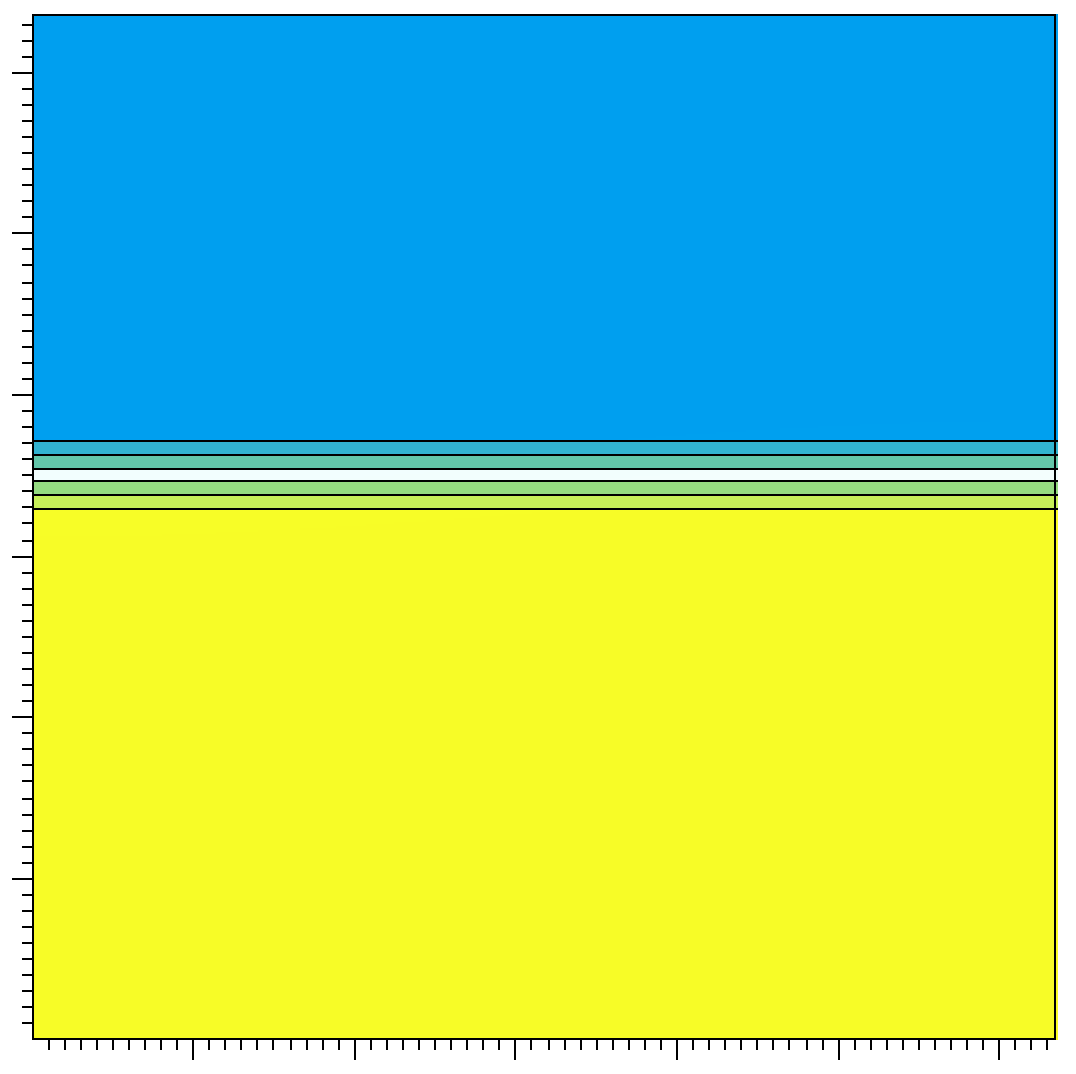}}
\\~~\\
\scriptsize{(a5) $t = 1$}\qquad\qquad\qquad\qquad\scriptsize{(a6) $t = 3$}
\qquad\qquad\qquad\qquad\scriptsize{(a7) $t = 6$}
\qquad\qquad\qquad\qquad\scriptsize{(a8) $t = 10,000$}\\
{\includegraphics[width=0.035\textwidth]{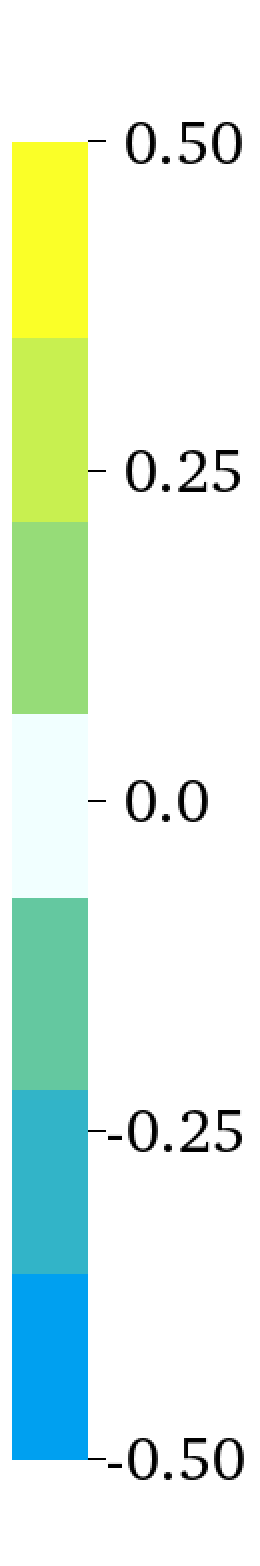}}
{\includegraphics[width=0.23\textwidth]{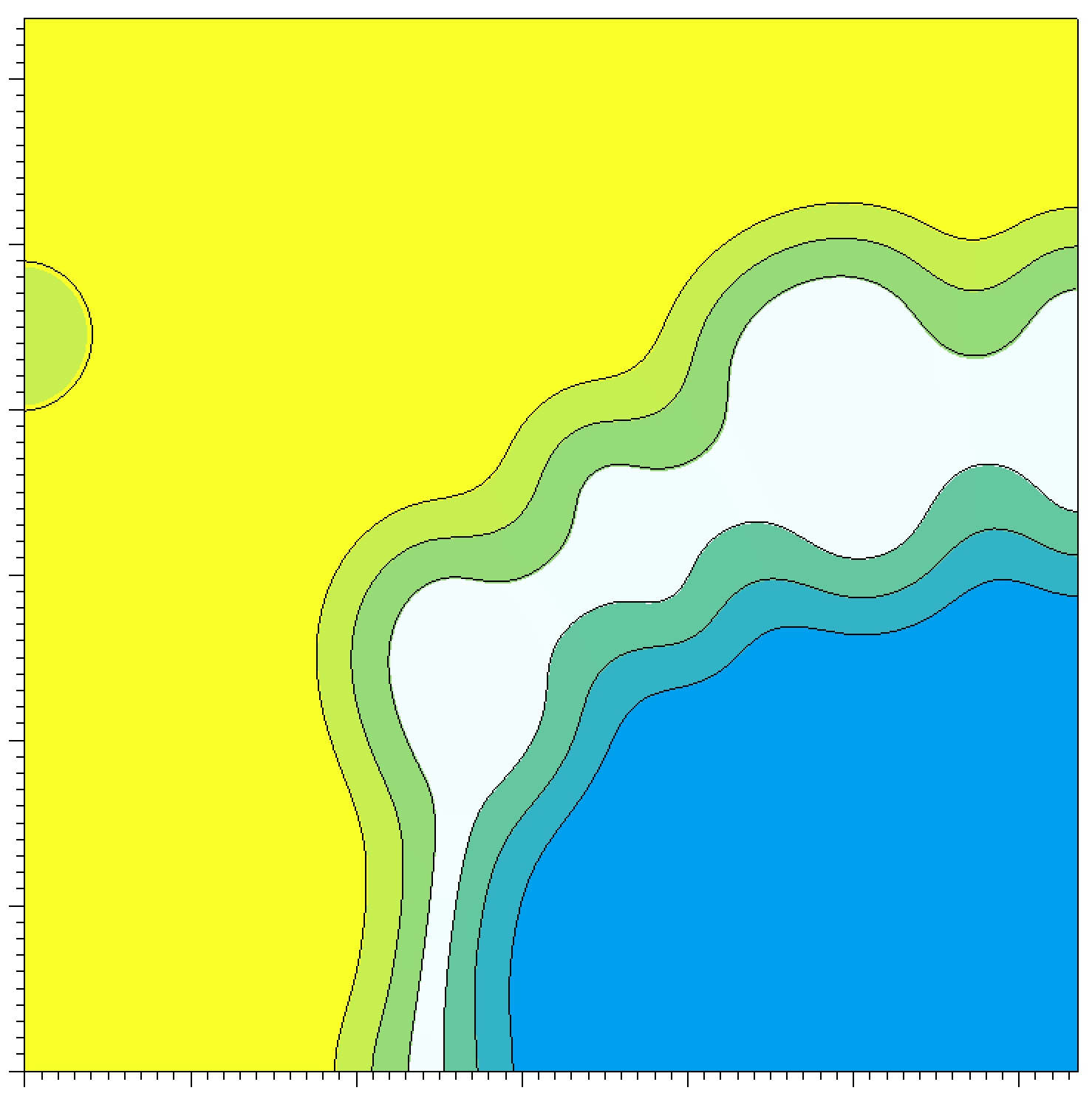}}
{\includegraphics[width=0.23\textwidth]{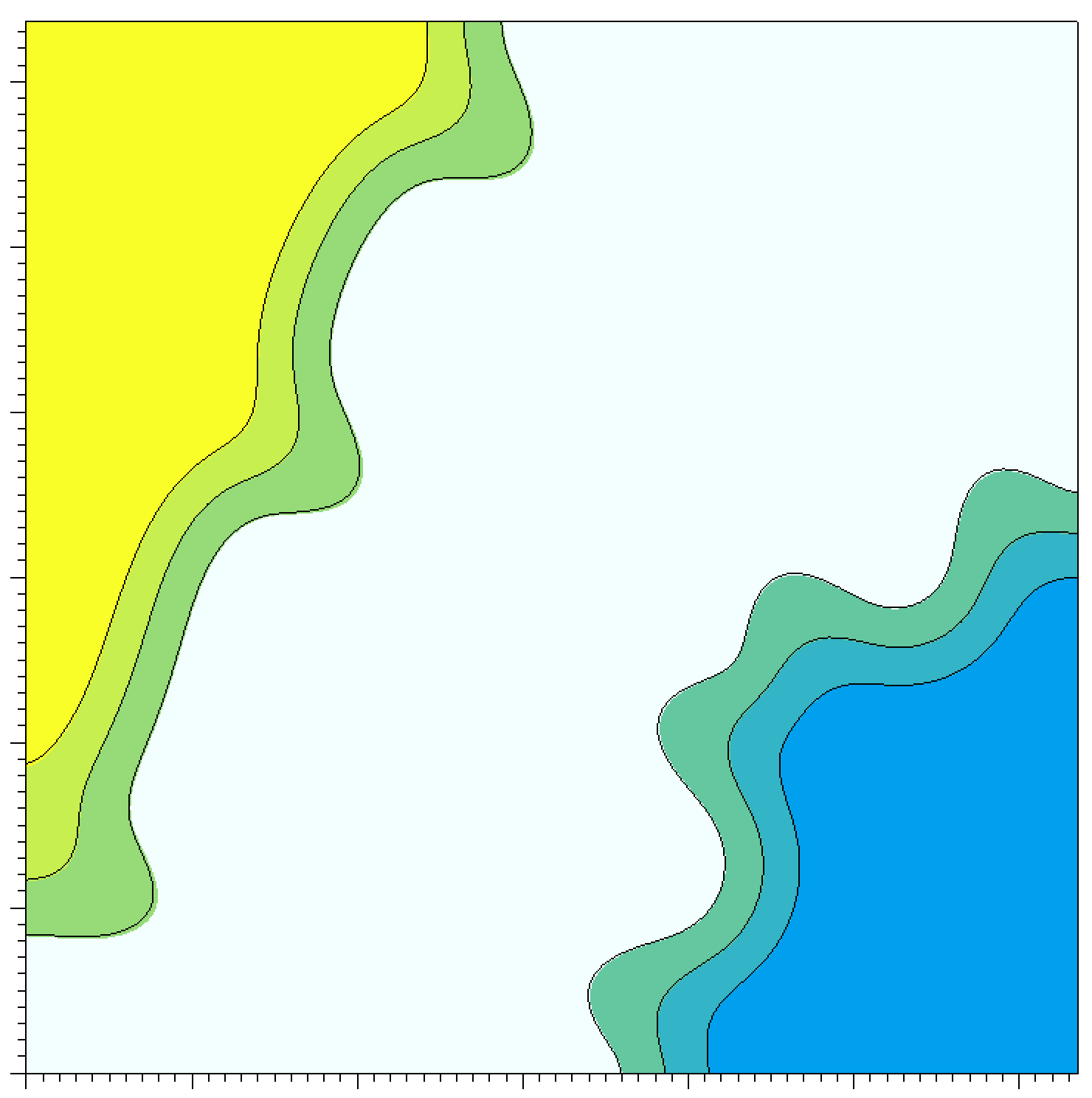}}
{\includegraphics[width=0.23\textwidth]{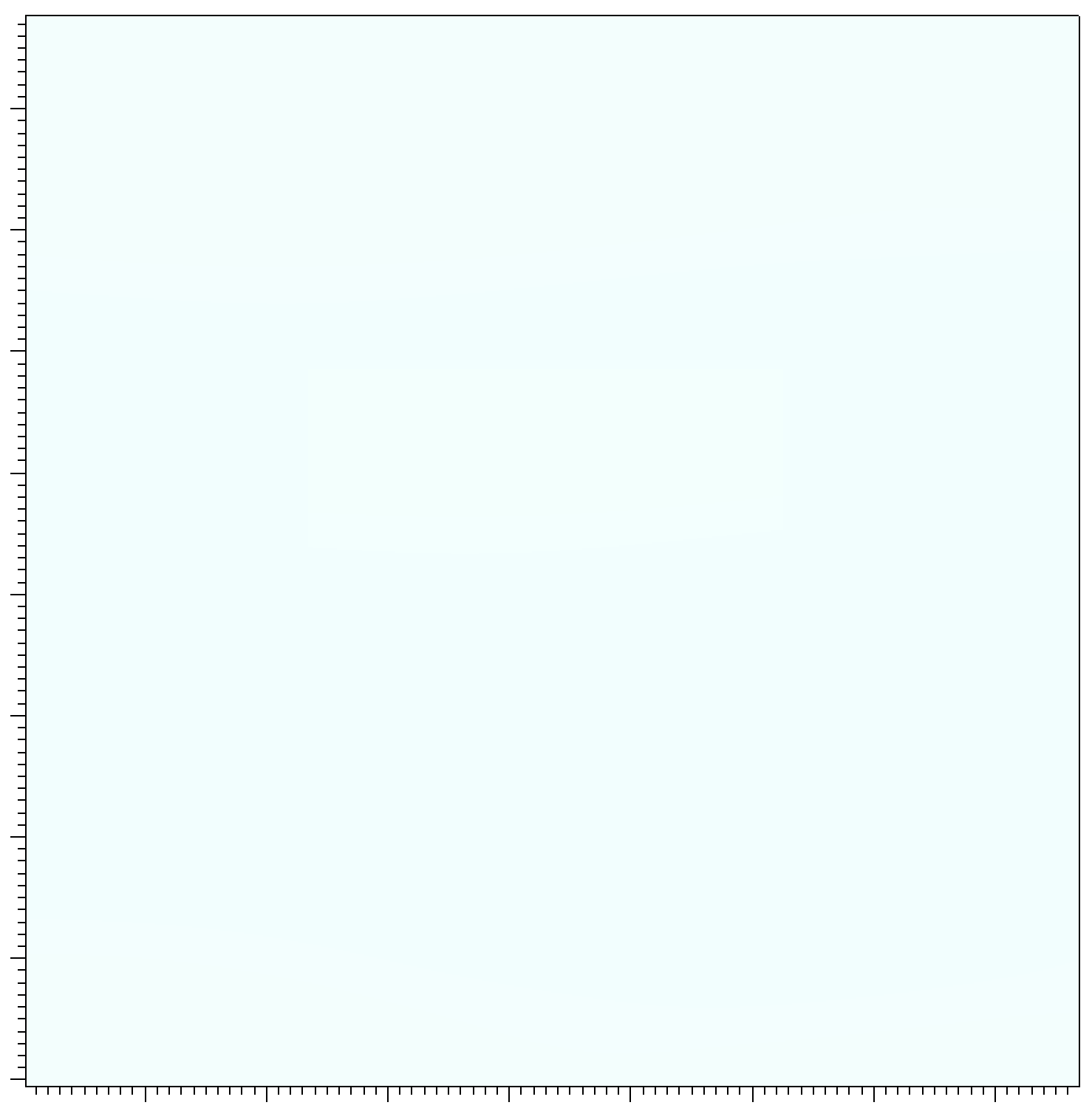}}
{\includegraphics[width=0.23\textwidth]{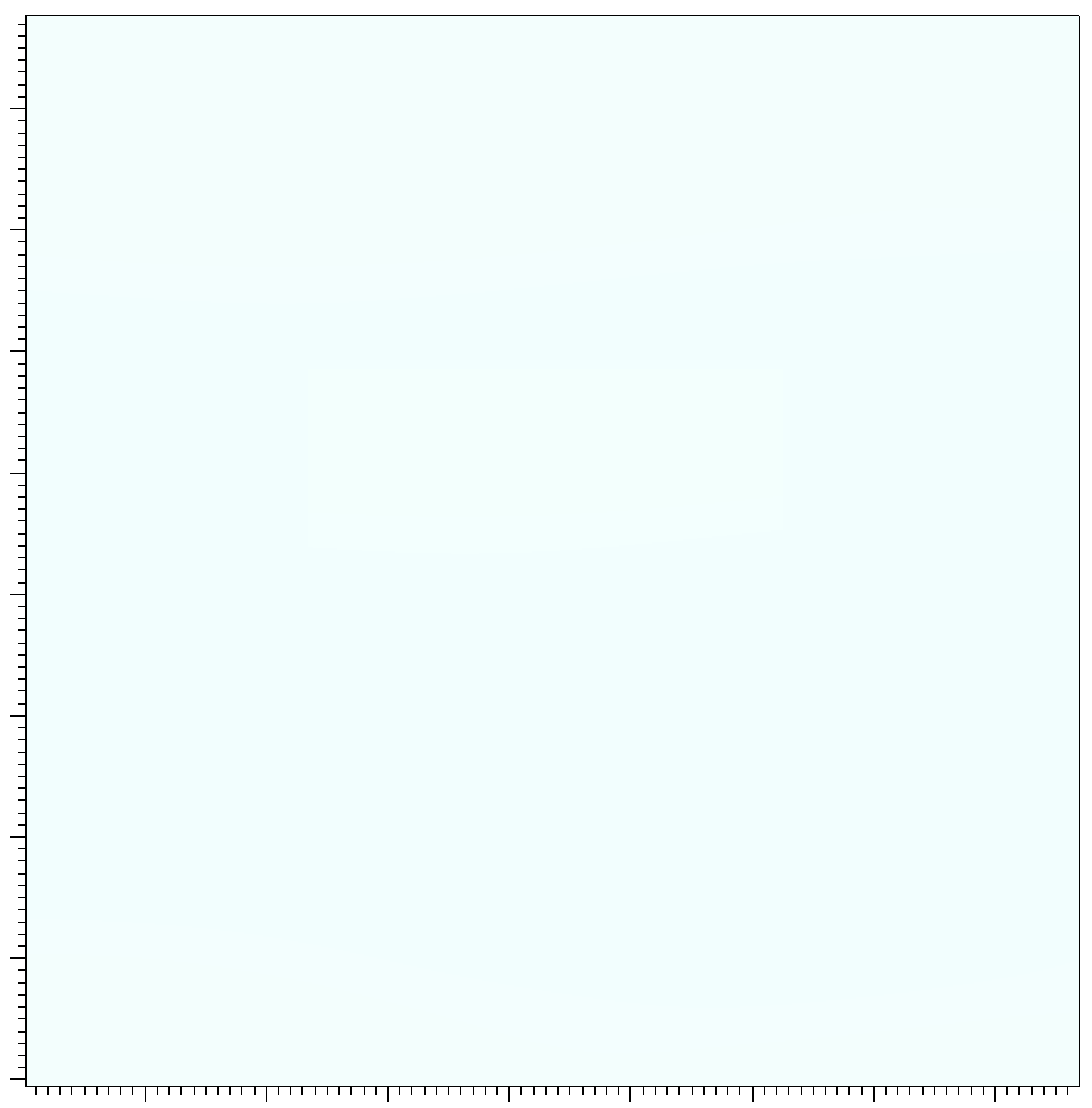}}
\end{center}
\caption{ Simulation a: homogeneous Neumann boundary conditions. The distribution of the concentration field $u$ (a1-a4) 
and order parameter $v$ (a5-a8) on the computational domain. \color{black} In Fig.~(a7) and (a8),  the order parameter $v$ in the total computational domain is very close to zero, which leads to empty contour plots.  \color{black}
 }
\label{ex2d:solu}
\end{figure}

\begin{figure}[!h]
\begin{center}
\scriptsize{(b1) $t = 0.4$}\qquad\qquad\qquad\qquad\scriptsize{(b2) $t = 1$}
\qquad\qquad\qquad\qquad\scriptsize{(b3) $t = 1.4$}
\qquad\qquad\qquad\qquad\scriptsize{(b4) $t = 10,000$}\\
{\includegraphics[width=0.03\textwidth]{draw/tab-u}}
{\includegraphics[width=0.23\textwidth]{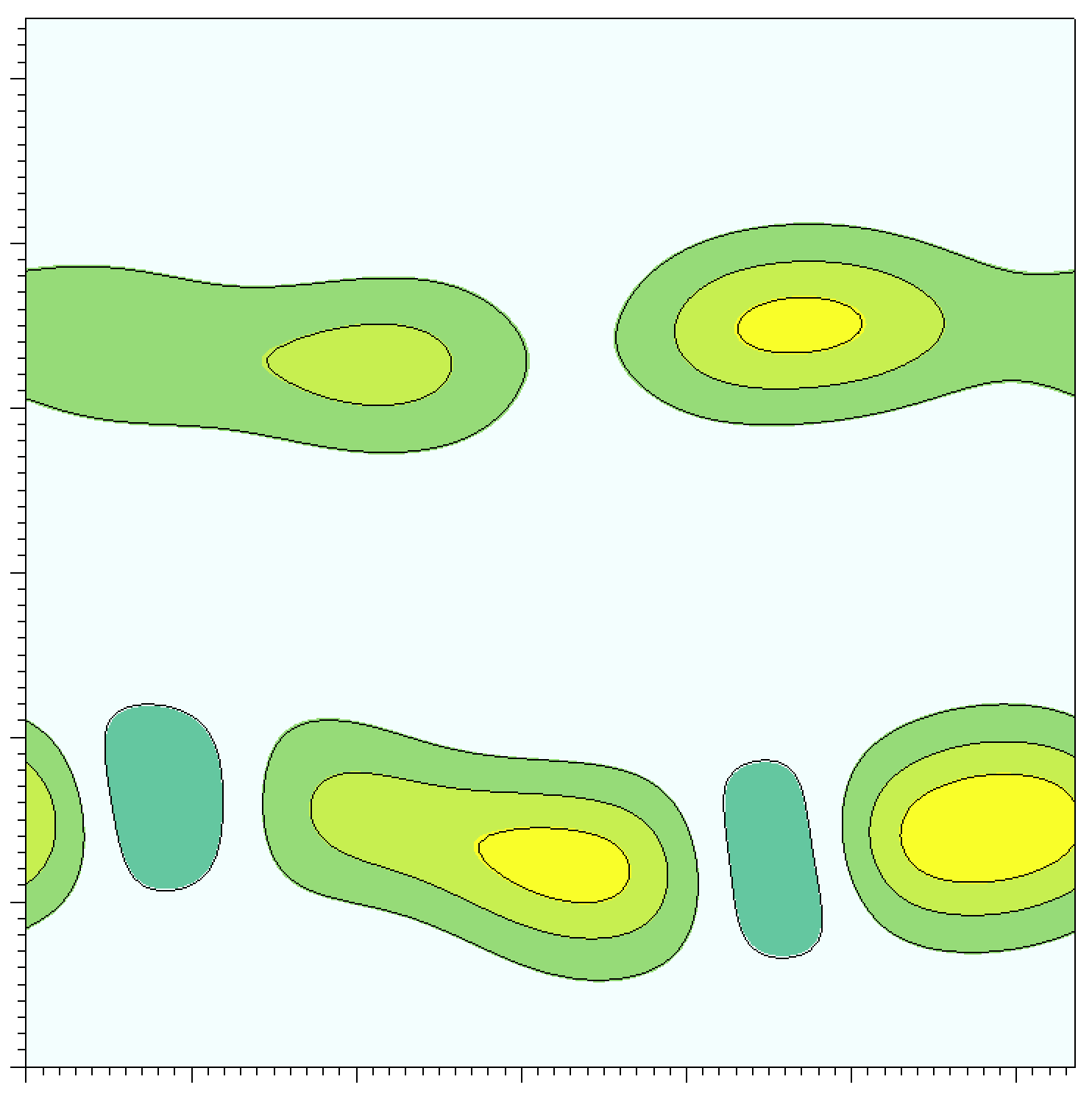}}
{\includegraphics[width=0.23\textwidth]{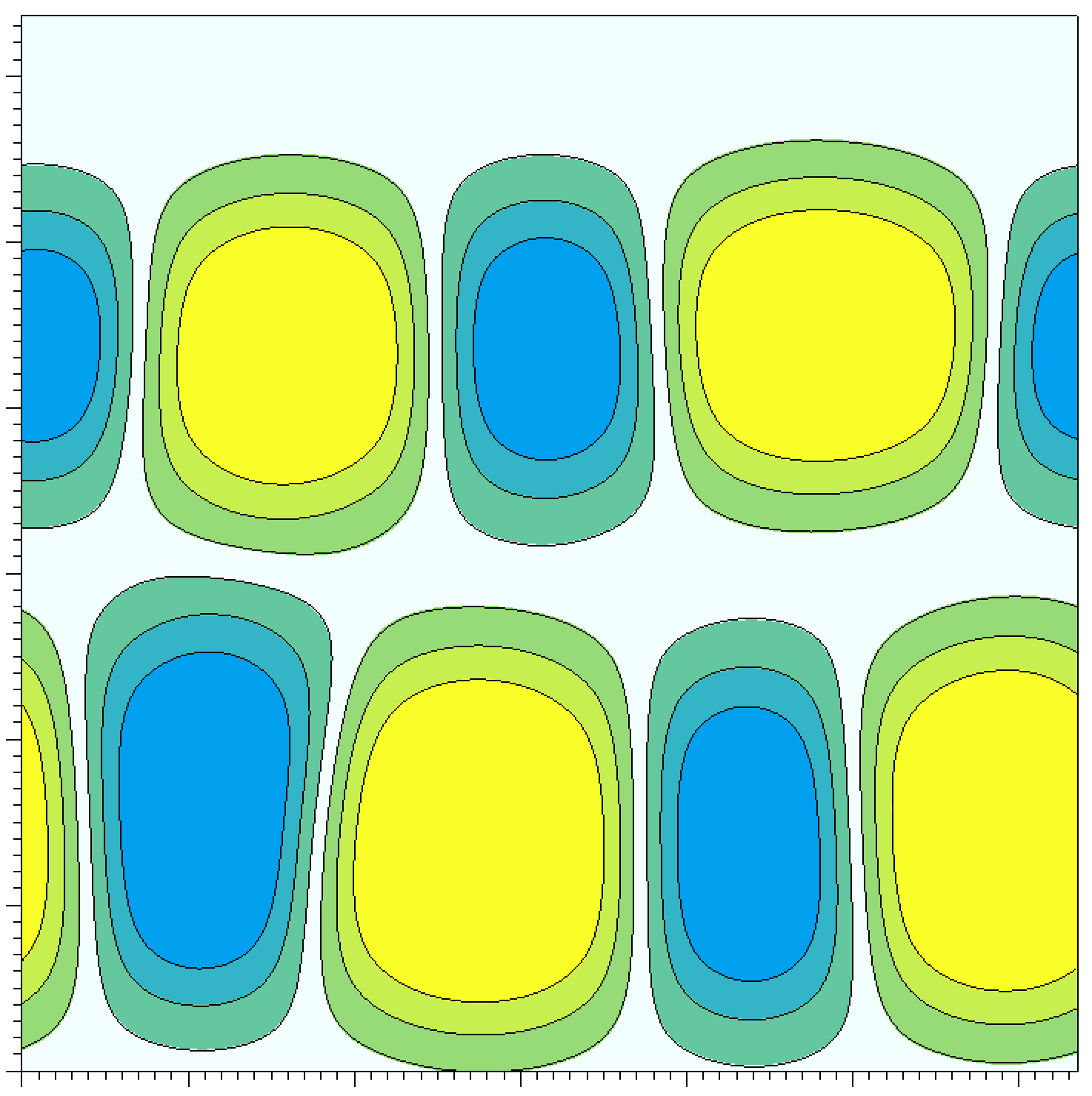}}
{\includegraphics[width=0.23\textwidth]{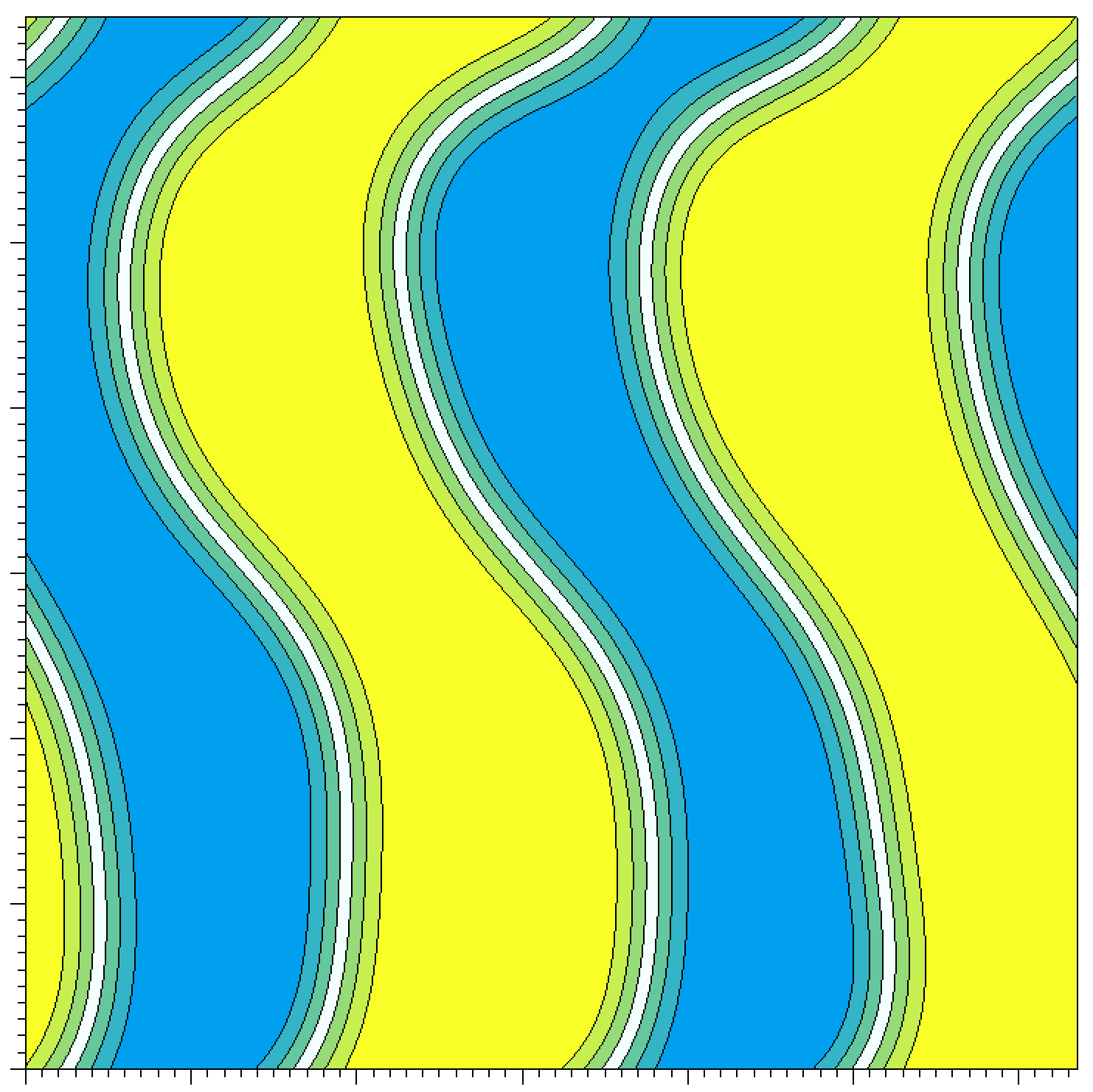}}
{\includegraphics[width=0.23\textwidth]{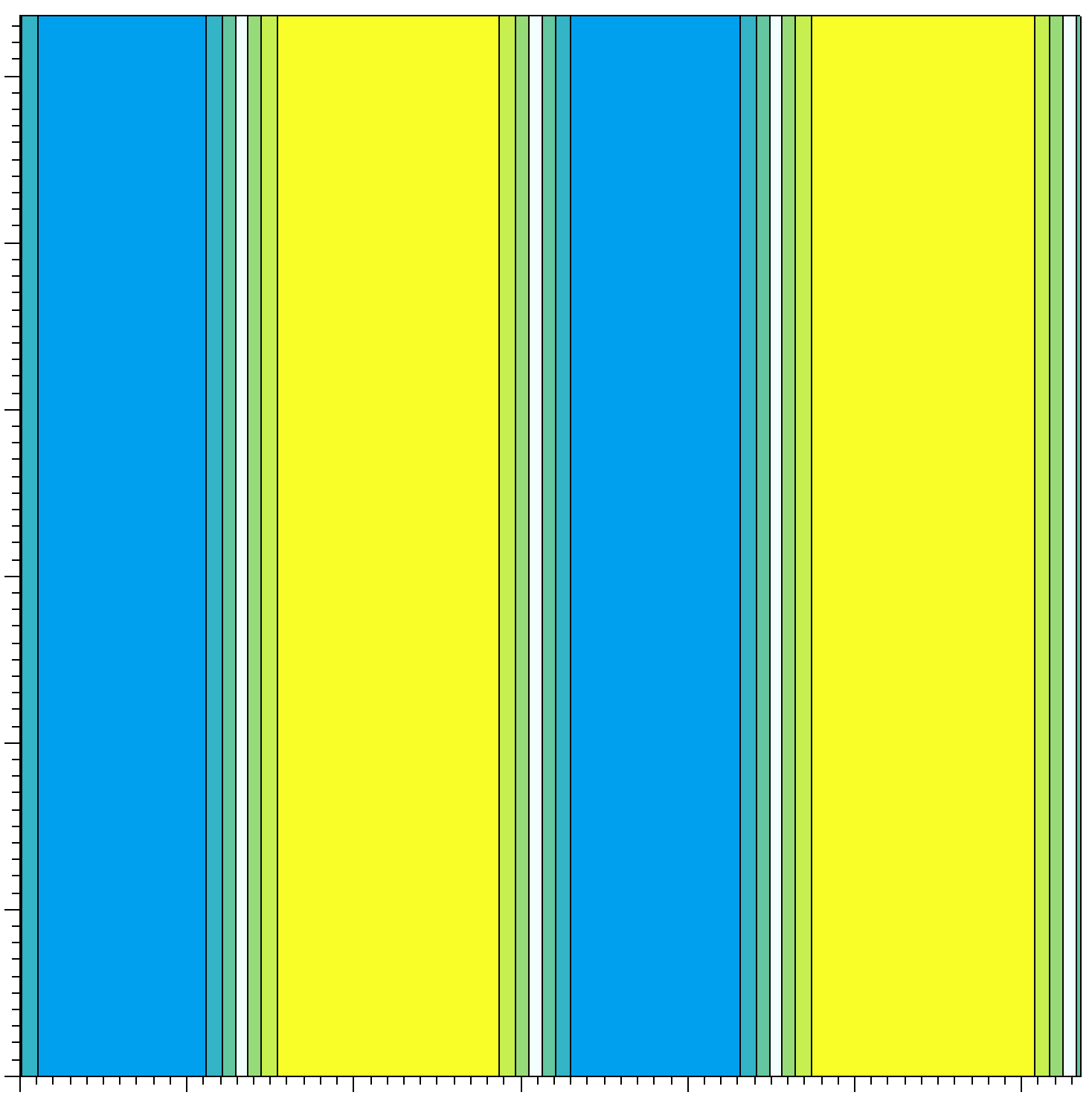}}
\\~~\\
\scriptsize{(b5) $t = 0.4$}\qquad\qquad\qquad\qquad\scriptsize{(b6) $t = 1$}
\qquad\qquad\qquad\qquad\scriptsize{(b7) $t = 1.4$}
\qquad\qquad\qquad\qquad\scriptsize{(b8) $t = 10,000$}\\
{\includegraphics[width=0.035\textwidth]{draw/tab-v}}
{\includegraphics[width=0.23\textwidth]{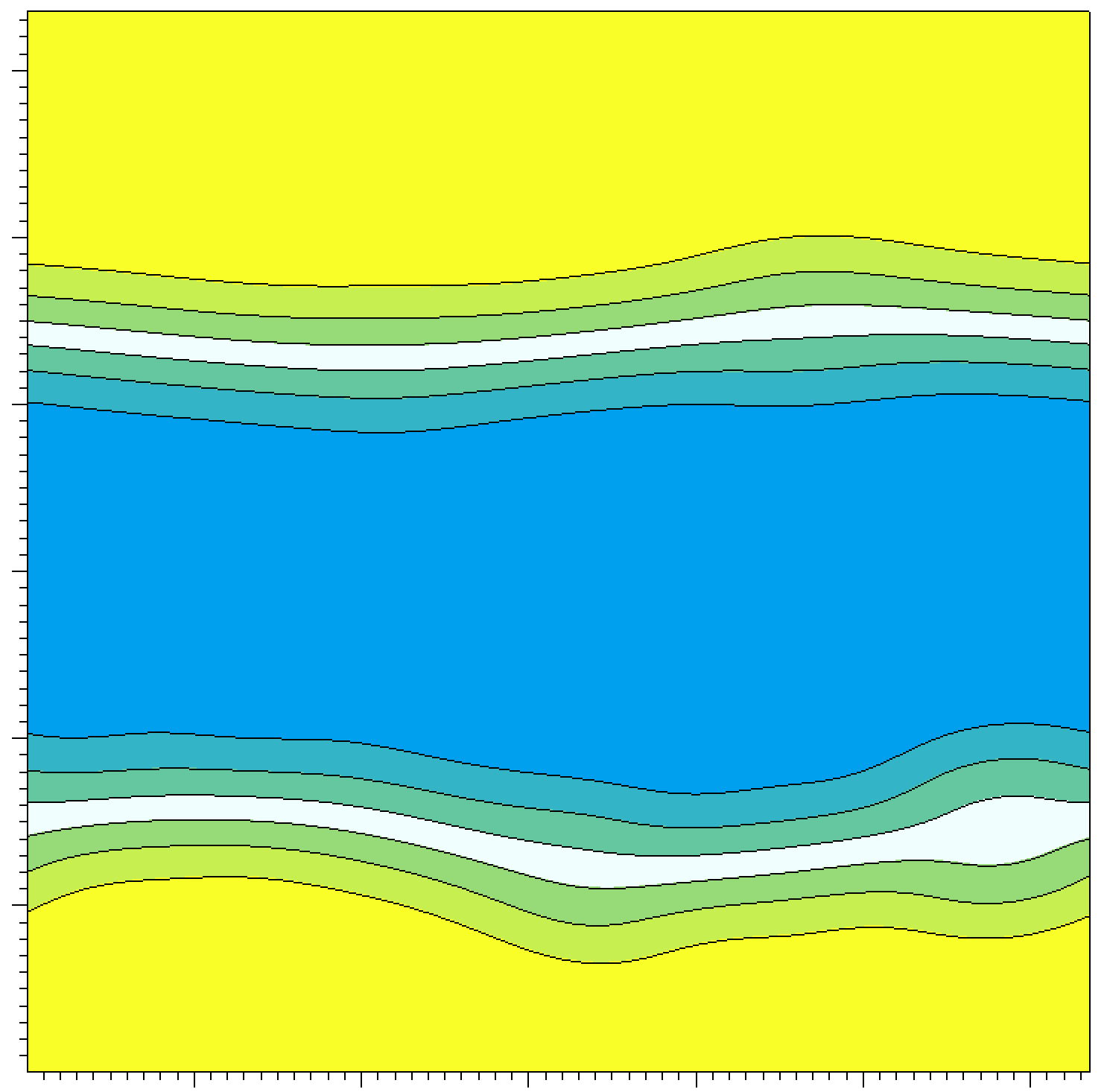}}
{\includegraphics[width=0.23\textwidth]{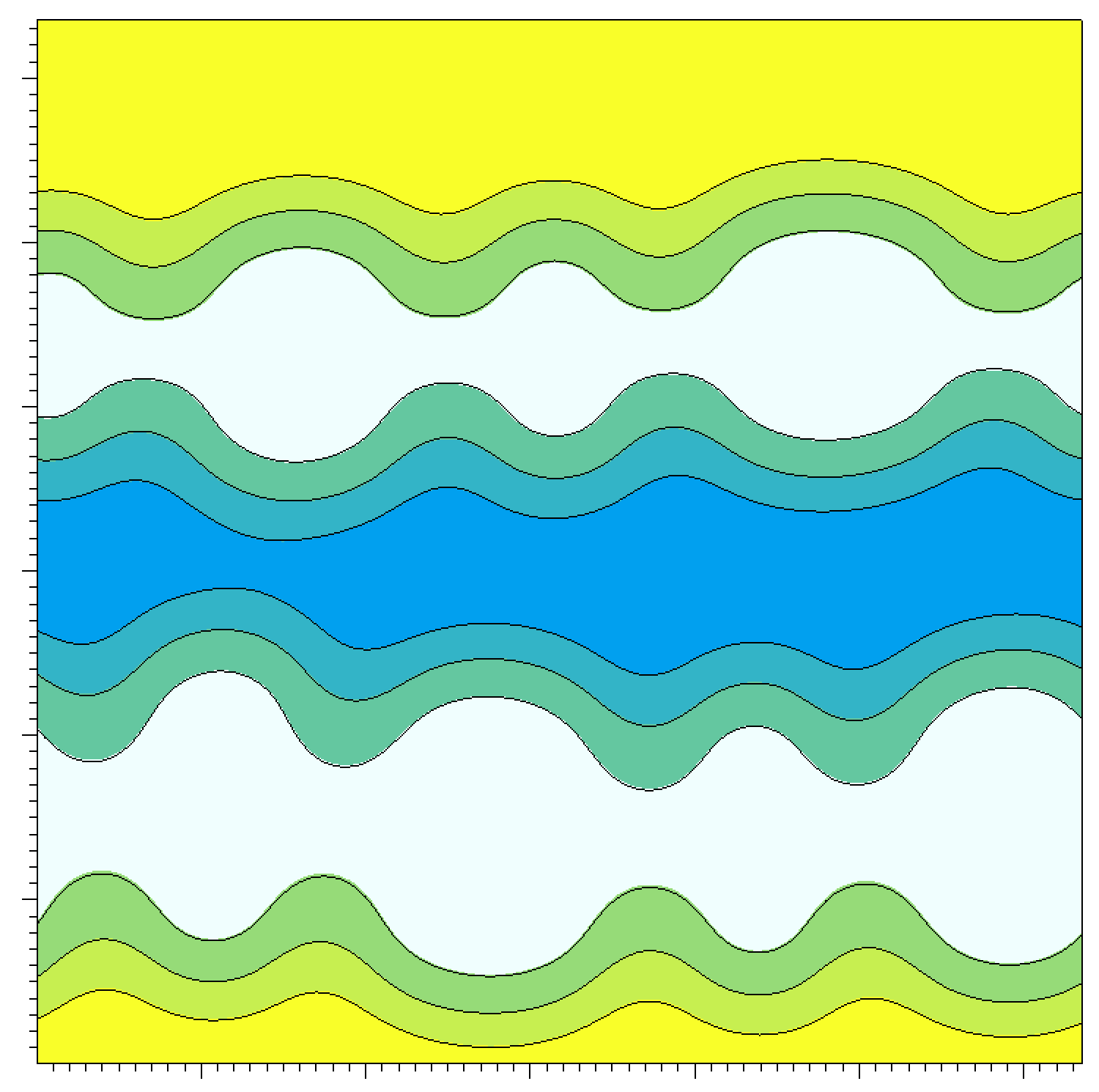}}
{\includegraphics[width=0.23\textwidth]{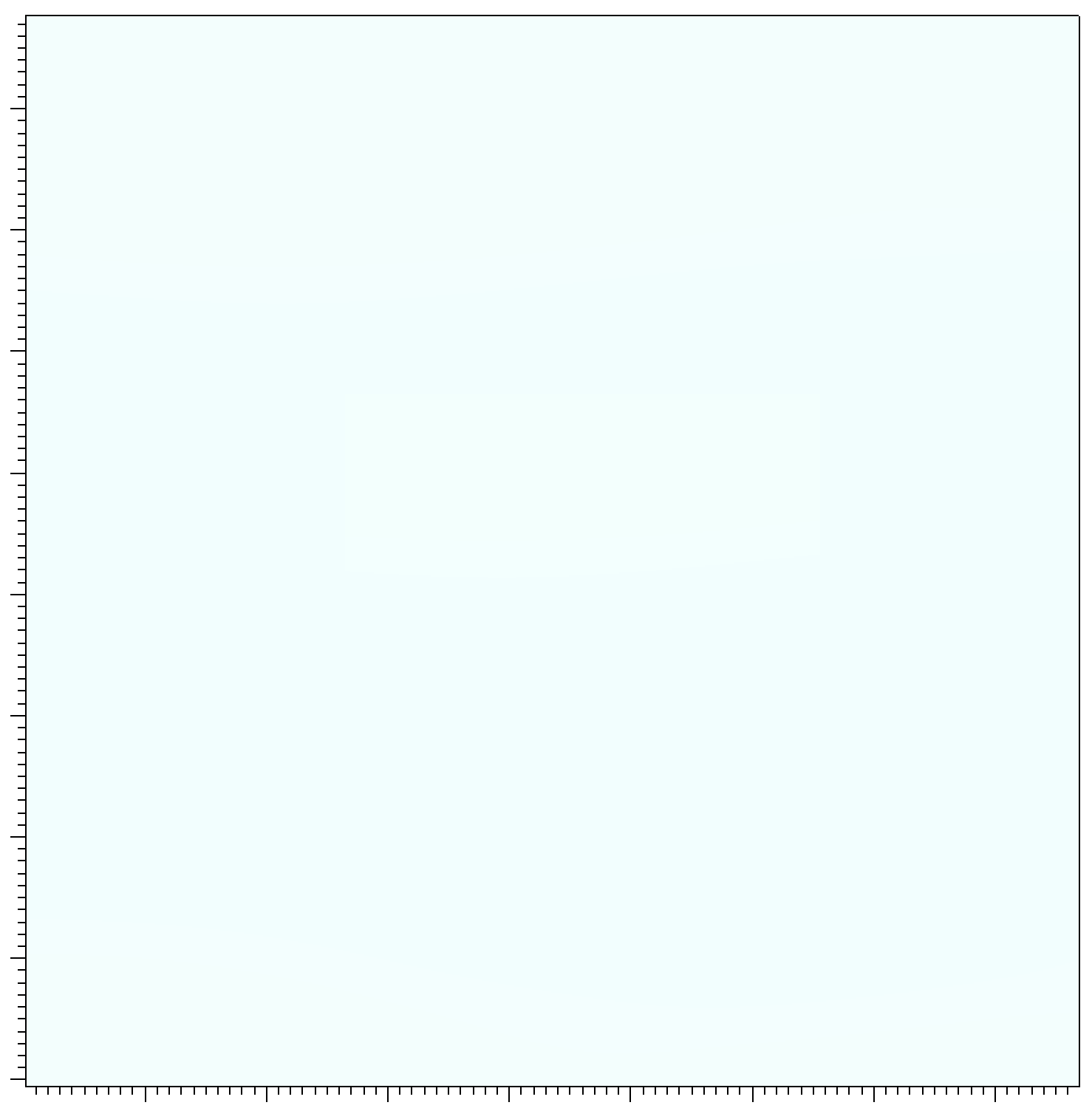}}
{\includegraphics[width=0.23\textwidth]{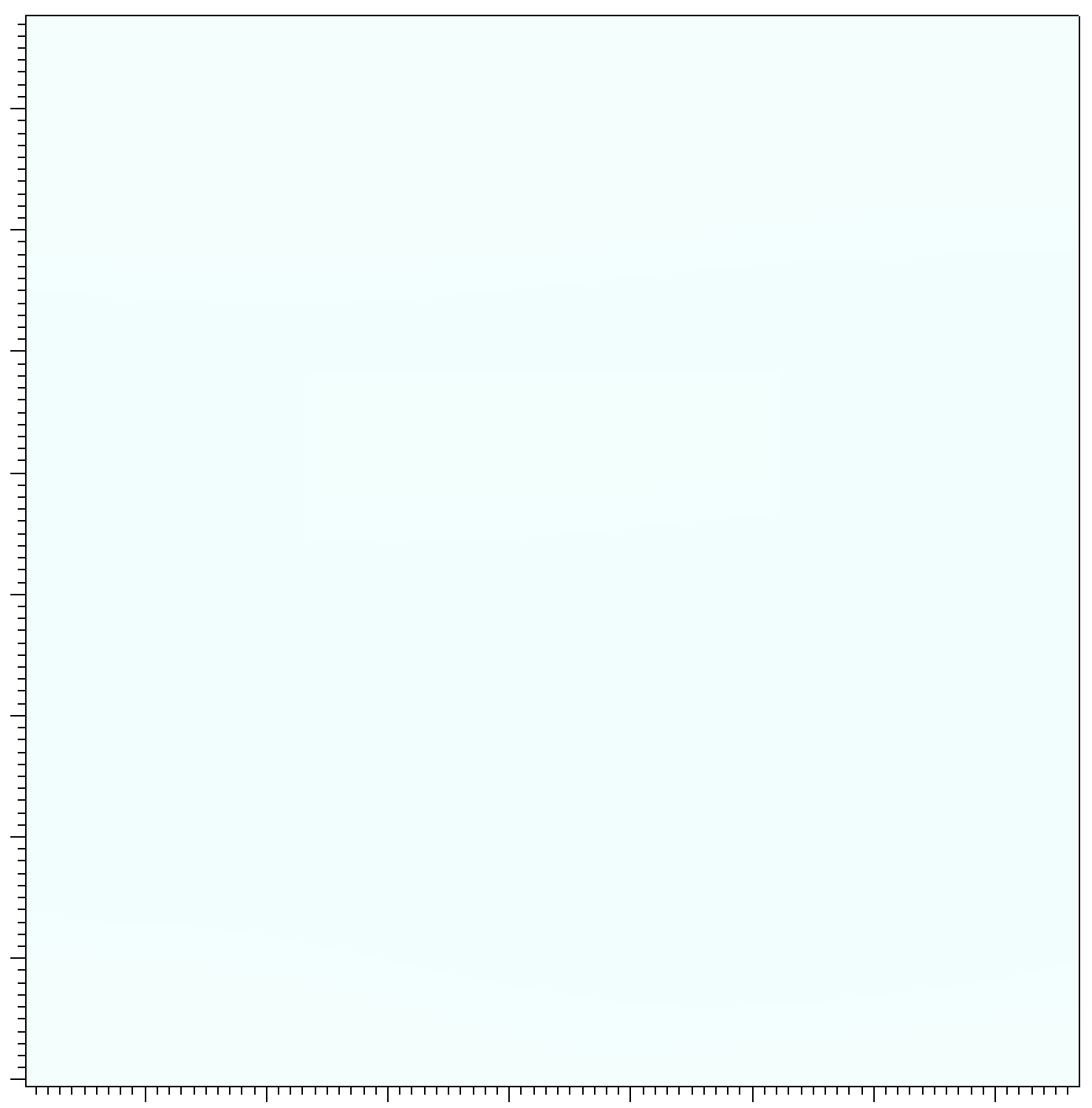}}
\end{center}
\caption{Simulation a: periodic boundary conditions. 
The distribution of the concentration field $u$ (b1-b4)  and  order parameter $v$ (b5-b8)  on the computational domain. 
\color{black} In Fig.~(b7) and (b8),  the order parameter $v$ in the total computational domain is very close to zero, which leads to empty contour plots.  \color{black} }
\label{ex2d:solv}
\end{figure}

To test the effect of the boundary condition, we run two simulations with: (a) homogeneous Neumann boundary conditions, and (b) periodic boundary conditions, respectively.
The contour plots of the concentration field $u$ and the order parameter $v$ are drawn in Fig.~\ref{ex2d:solu}  and Fig.~\ref{ex2d:solv}, respectively.
The numerical results show that both phase separation and order-disorder transitions 
occur at the early stage of the evolution, and an ordered steady state is finally reached after
the phase separation is completed.  As shown in Fig.~\ref{ex2d:solu},  simulation (a) 
only has one phase interface and  simulation (b) has four, with more intensity.
As shown in Fig.~\ref{ex2d:solv}, the order parameter $v$ first develops to the state approaching 
to the upper and lower bounds $\pm 1/2$, then quickly tends to zero as the concentration field $u$ 
coarsens to a steady state. 
Overall, the simulations results  agree well with published results
\cite{Xia1,yangcai12dd21}.

Furthermore, we plot the evolution of the total free energy and the history of the time step size  
in Fig.~\ref{ex2d:energy}. As seen from Fig.~\ref{ex2d:energy}(1), the total free energies 
of all simulations decrease monotonically as the solution evolves to the steady state. 
As compared to simulation (b), the steady state of simulation (a) has lower total free energy due to \color{black} fewer \color{black} phase interfaces.
From Fig.~\ref{ex2d:energy}(2), we observe 
the size of the time step for the both simulations initially keeps to be $\Delta t_{\mathrm{min}}$ due to the fast variation of the solutions, then increases due to phase separation and finally evolves to $\Delta t_{\mathrm{max}}$. By using the adaptive time stepping strategy, the time step is successfully adjusted by five orders of magnitude, which can substantially reduce the computational cost. 
As a comparison, we also run the two simulations
with a fixed time step size $\Delta t=10^{-4}$ and plot the evolutions of the total free-energy in Fig.~\ref{ex2d:energy}~(1).
From the figure, we observe that the evolutions of the total free-energy obtained from the adaptive time stepping strategy
and a fixed small time step size are almost the same,
which validates the accuracy of the adaptive time stepping strategy. The total compute times for the four simulations are listed
in Fig.~\ref{ex2d:energy} (1), which clearly shows that the adaptive time stepping strategy can save 99.9\% compute time as compared with a fixed small time step size.

\begin{figure}[!h]
\begin{center}
{\includegraphics[width=0.48\textwidth]{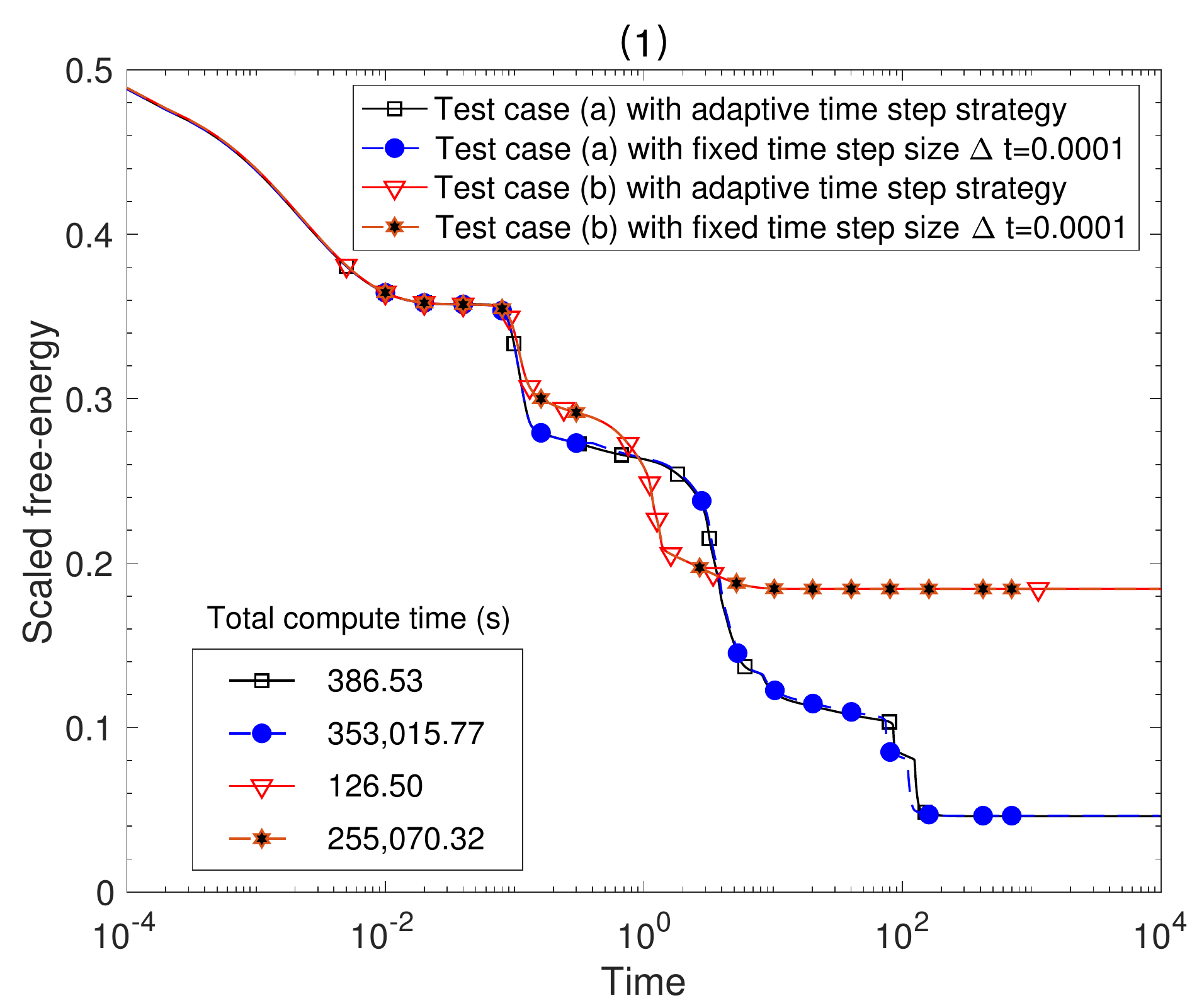}}
{\includegraphics[width=0.48\textwidth]{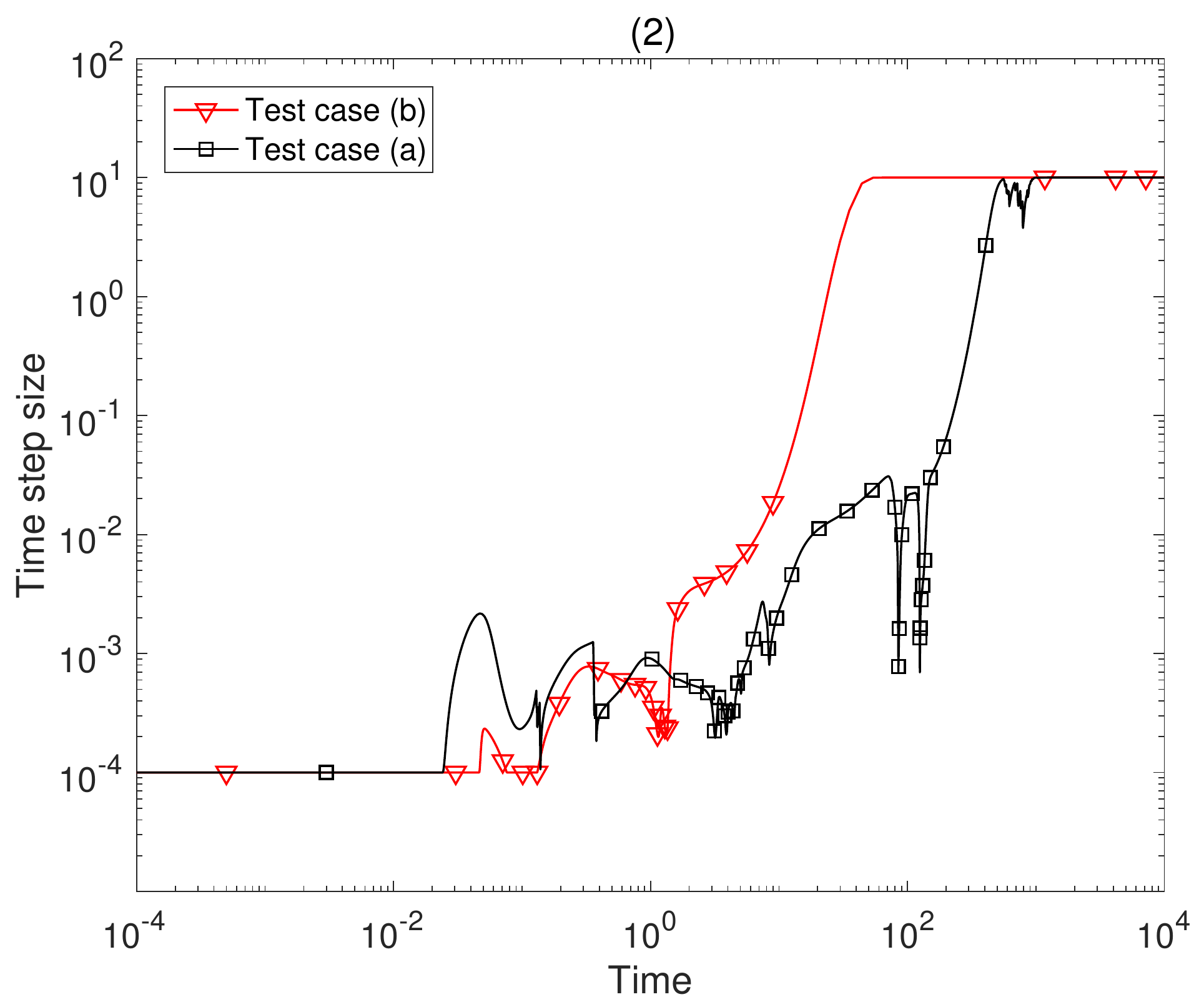}}
\end{center}
\caption{The evolution of the total free-energy (1) and the history of the time step size (2). All test cases are done by using 360 processor cores. }
\label{ex2d:energy}    
\end{figure}

\vspace{0.2cm}
\noindent B. Three dimensional tests
\vspace{0.2cm}

In this part, we consider a three dimensional problem with randomly initial data  $(u^{(0)}, v^{(0)})  = 
(0.55 + \delta_u, \delta_v)$. Here 􏰋$\delta_u$ and $\delta_v$ are 
uniform random distributions in $-0.05$ to $0.05$. The parameters are set to
$\alpha = 4$, $\beta = 2$, $\gamma=0.005$, $\theta = 0.1$, $\rho =0.001$, and $S=10$. The computational domain 
 $\Omega = [0,1]^3$ is covered by a $128\times 128\times 128$ uniform mesh. The time step size is  initially set to
$\Delta t_1 = 10^{-4}$ and  adaptively controlled by the proposed adaptive time stepping strategy with
 $\Delta t_\mathrm{min} = 10^{-4}$, and $\Delta t_\mathrm{max} = 2$.
We run the test with the homogeneous Neumann boundary conditions. 
\begin{figure}[!h]
\begin{center}
\scriptsize{(a1) $t = 1$}\qquad\qquad\qquad\qquad\scriptsize{(a2) $t = 2$}
\qquad\qquad\qquad\qquad\scriptsize{(a3) $t = 10$}\qquad\qquad\qquad\qquad\scriptsize{(a4) $t = 3,000$}\\
{\includegraphics[width=0.038\textwidth]{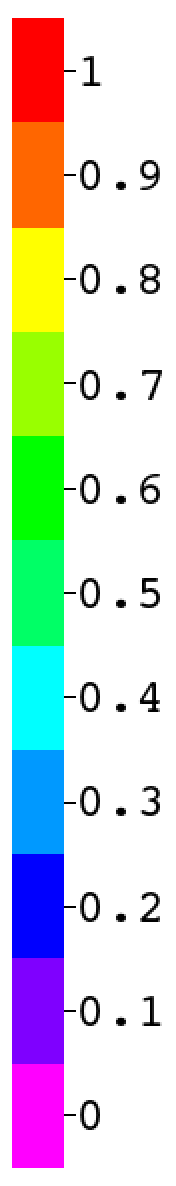}}
{\includegraphics[width=0.23\textwidth]{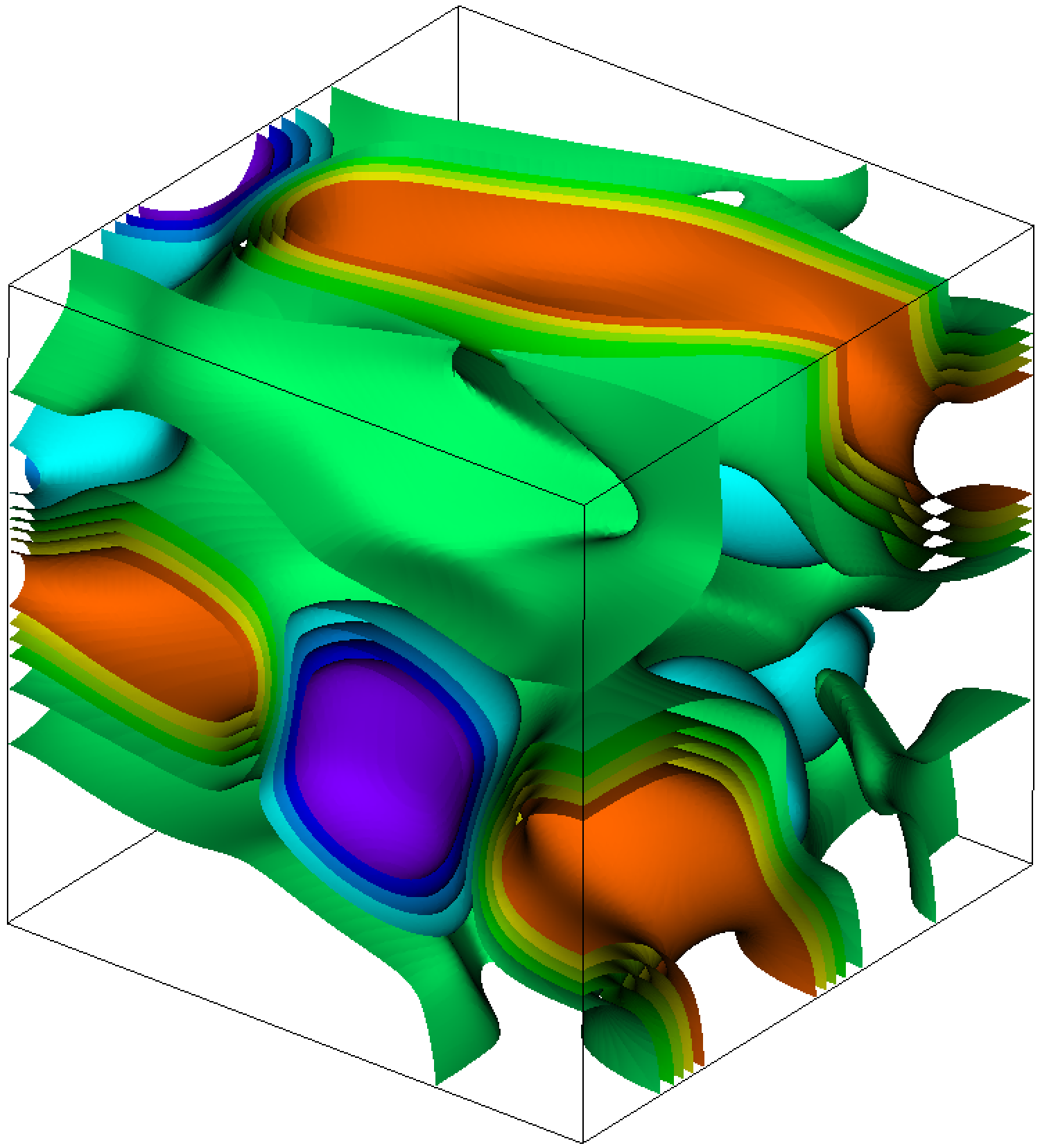}}
{\includegraphics[width=0.23\textwidth]{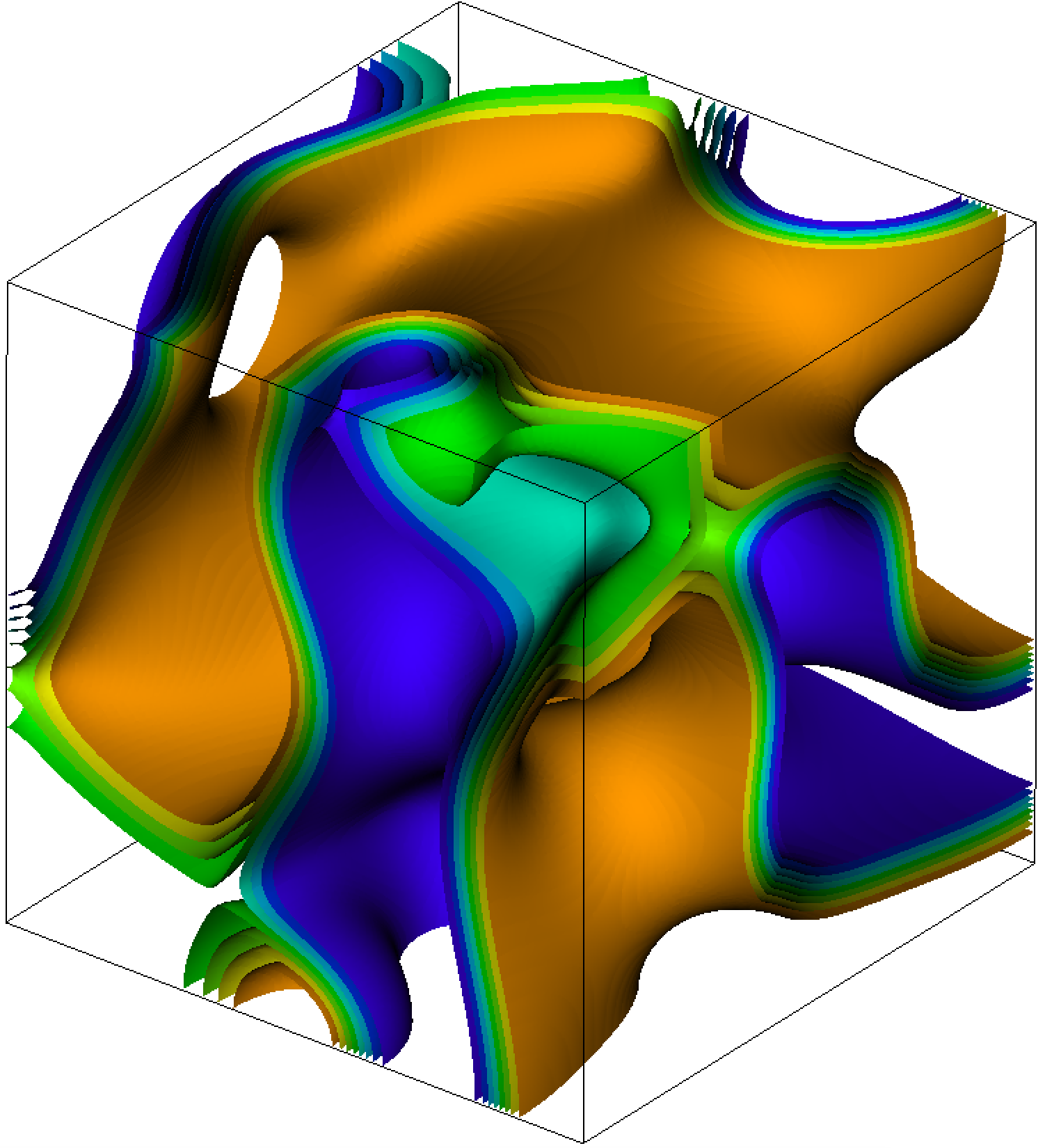}}
{\includegraphics[width=0.23\textwidth]{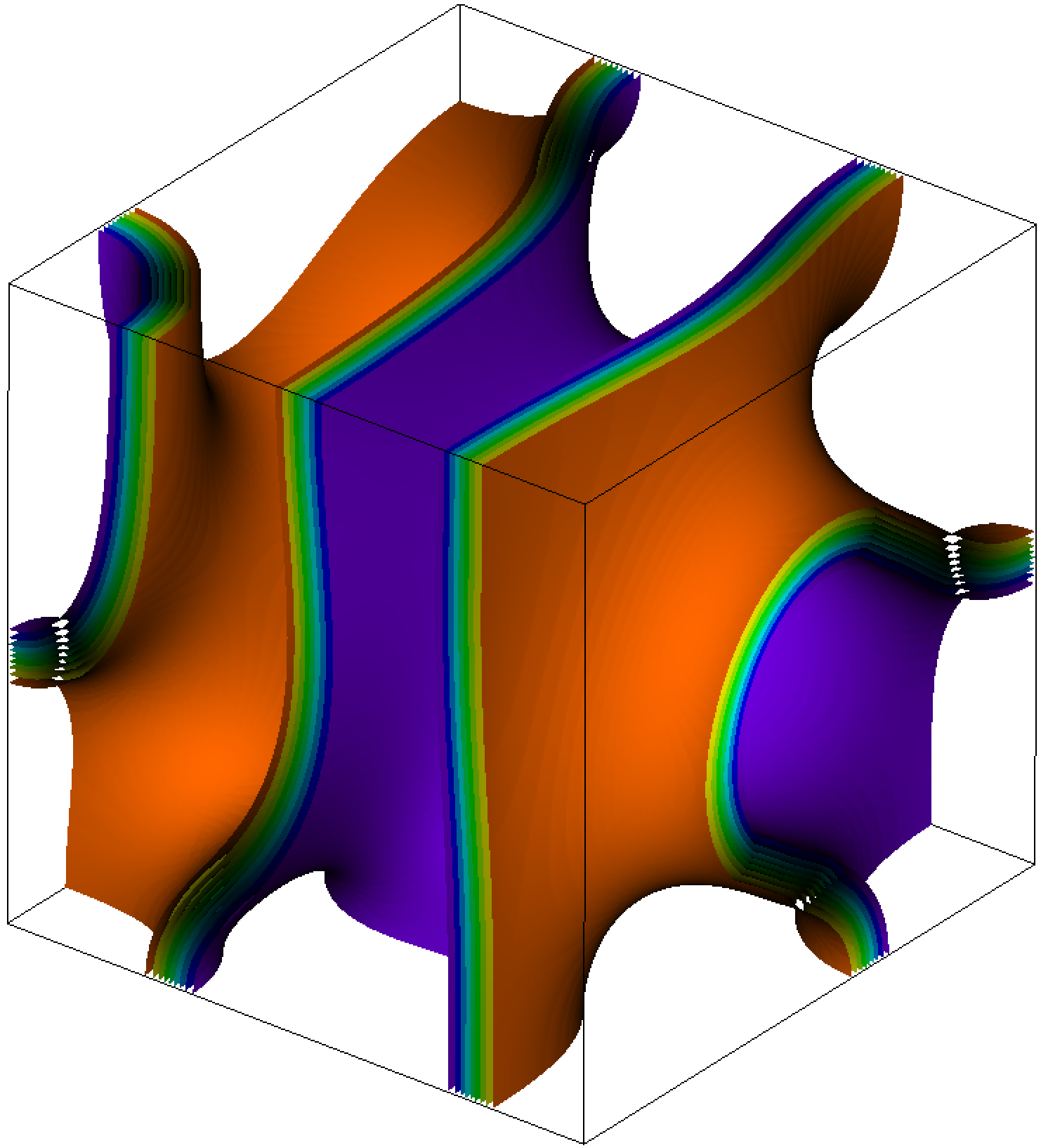}}
{\includegraphics[width=0.23\textwidth]{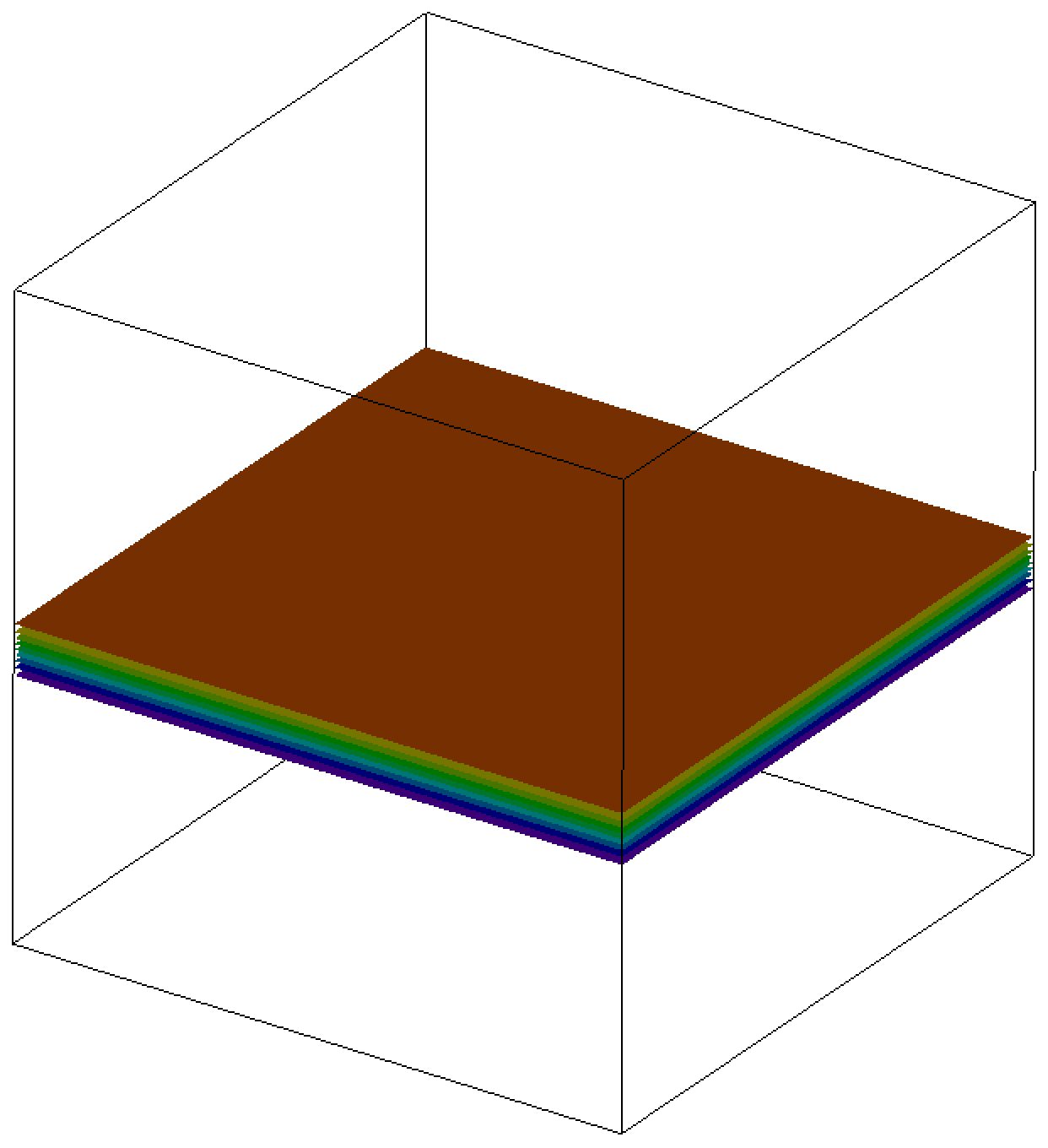}}
\\~~\\
\scriptsize{(b1) $t = 1$}\qquad\qquad\qquad\qquad\scriptsize{(b2) $t = 2$}\qquad\qquad\qquad\qquad\scriptsize{(b3) $t = 10$}\qquad\qquad\qquad\qquad\scriptsize{(b4) $t = 3,000$}\\
{\includegraphics[width=0.042\textwidth]{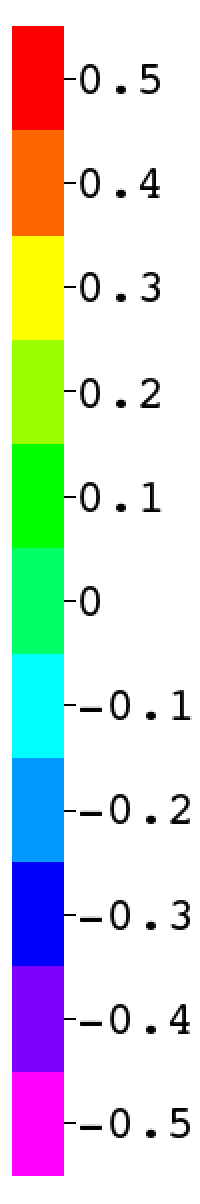}}
{\includegraphics[width=0.23\textwidth]{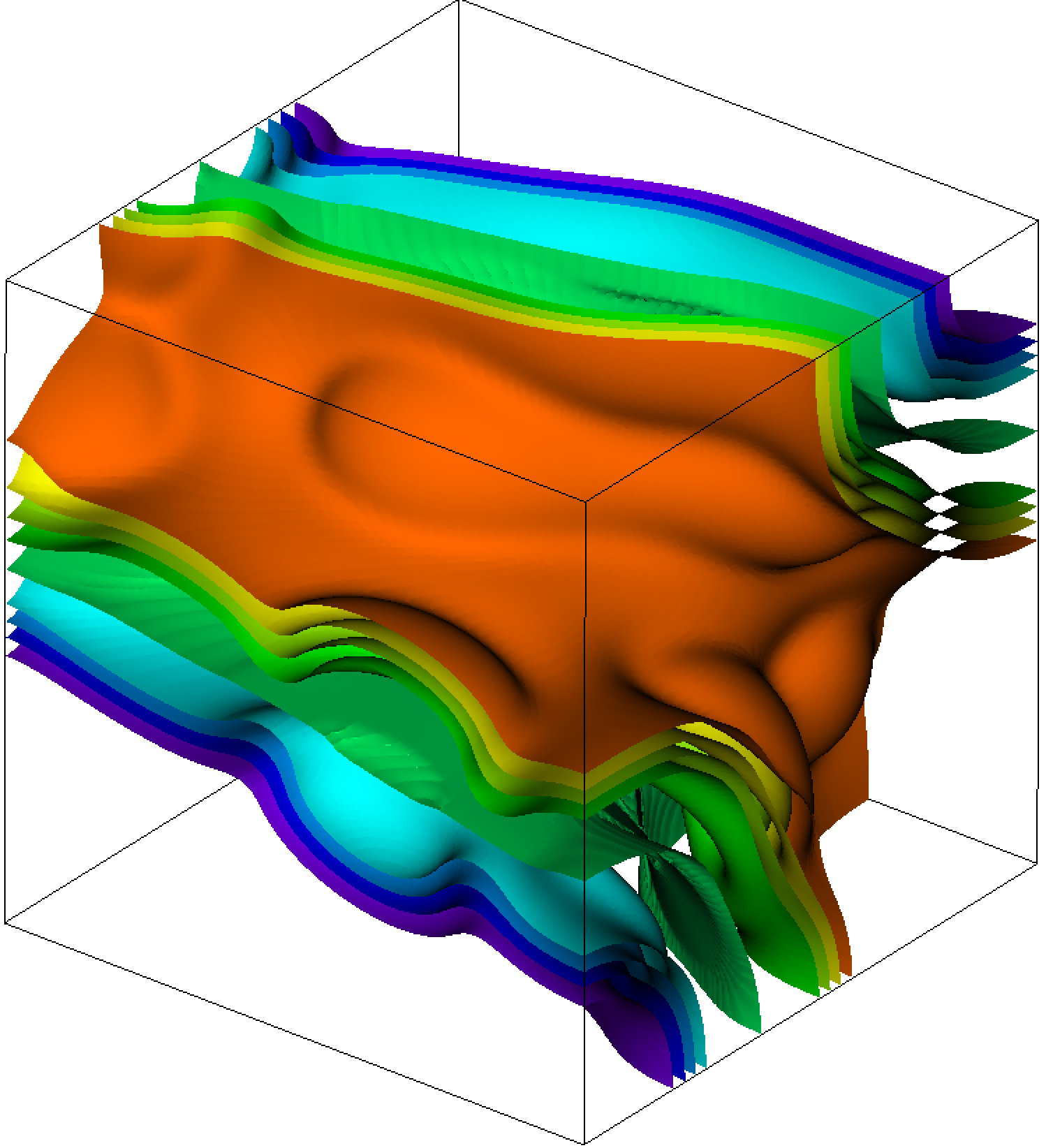}}
{\includegraphics[width=0.23\textwidth]{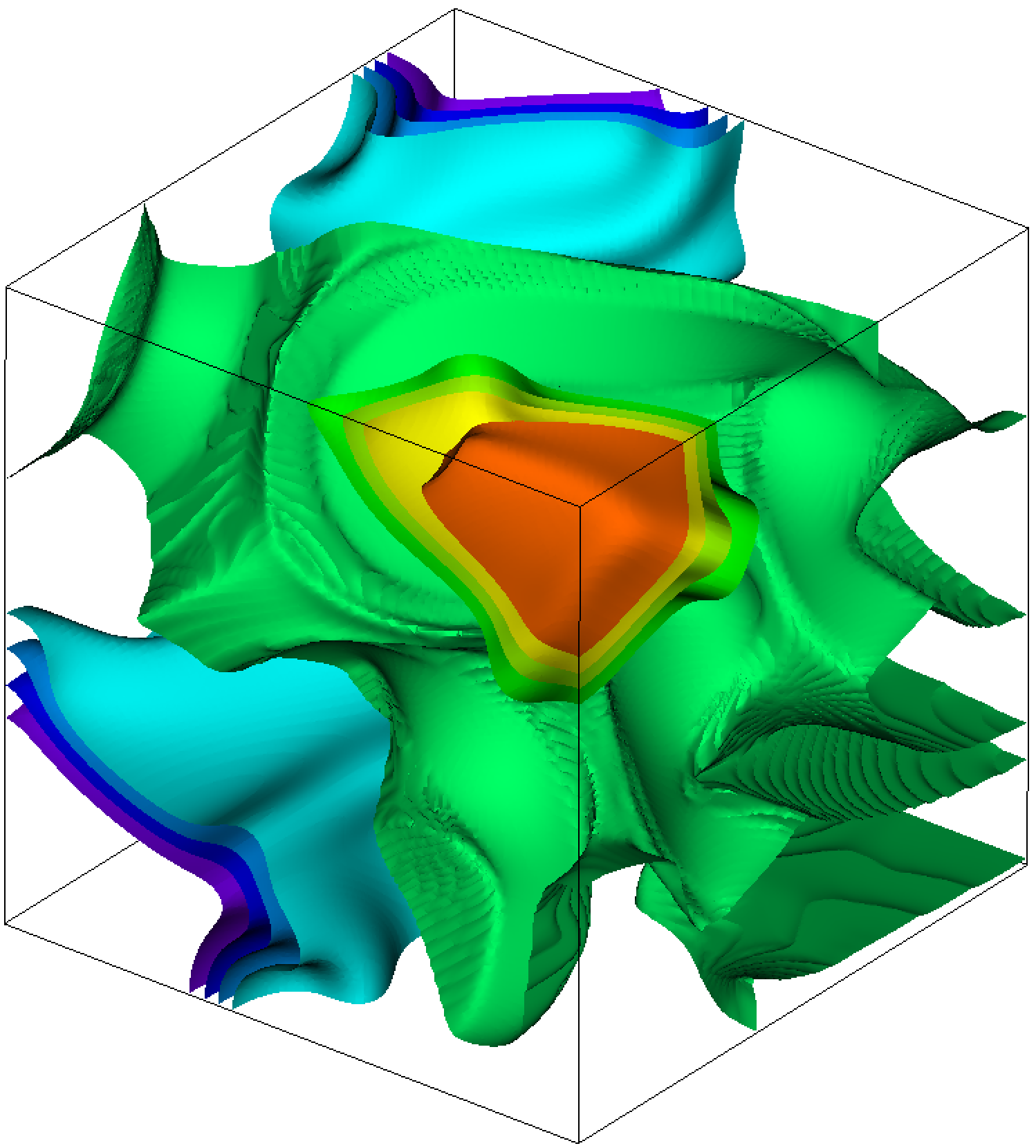}}
{\includegraphics[width=0.23\textwidth]{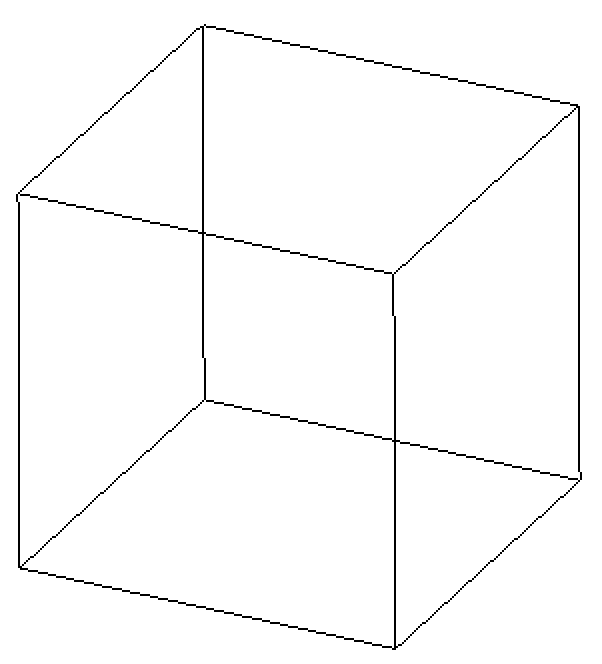}}
{\includegraphics[width=0.23\textwidth]{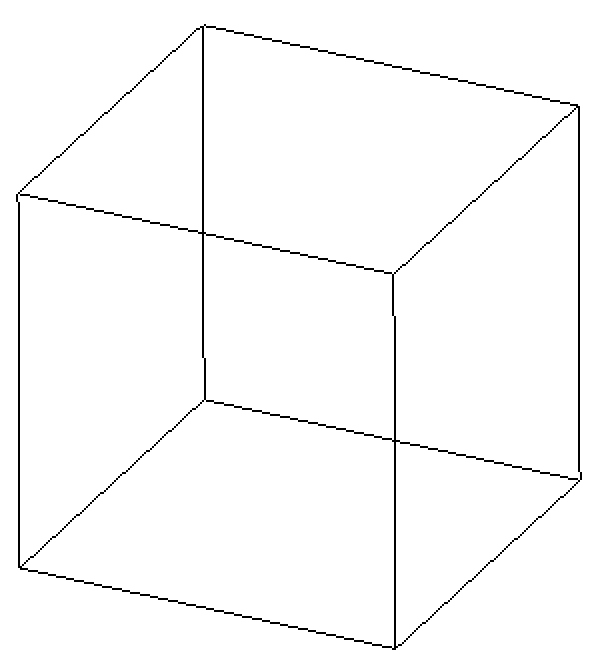}}
\end{center}
\caption{The distribution of the concentration field $u$ (a1-a4) and the order parameter $v$
(b1-b4). \color{black} In Fig.~(b3) and (b4),  the order parameter $v$ in the total computational domain is very close to zero, which leads to empty iso-surface plots.  \color{black}
}
\label{ex3d:isosurface}
\end{figure}

Fig.~\ref{ex3d:isosurface} displays the isosurface plots of the concentration field $u$ 
and the order parameter $v$ at $t=1, 2, 10,$ and 3,000, respectively.  
As seen from Fig.~\ref{ex3d:isosurface}, the phase separation and order-disorder transitions occur at the beginning, and the 
order parameter $v$ quickly tends to zero as the concentration field $u$ coarsens to a steady state, 
which is similar to the two dimensional case.
The evolution of the total free energy and the 
history of the time step size 
are shown in Fig.~\ref{ex3d:energy}. From Fig.~\ref{ex3d:energy}, we observe that the
total free energy decreases monotonically as the solution evolves to the steady state
and the time step size is successfully adjusted from 
$\Delta t_{\min}$ to $\Delta t_{\max}$ by four orders of magnitude. 
In this simulation, the parameter $\eta$ in the adaptive time stepping strategy is initially set as 100 and finally adjusts to $2^{36}\times100$, 
which is ten orders of magnitude larger.
To show the efficiency of the adaptive adjustment of $\eta$, we rerun the simulation in the time interval $[680,\,800]$ by using the adaptive time stepping 
strategy with fixed \color{black}$\eta=$3,200 \color{black} as a comparison. The corresponding compute time and the total number of divergent NKS solvers are 
reported in Table~\ref{tab:comparison}, which shows that one can  save about 50\% compute time by using an automatically adjusted $\eta$.

\begin{figure}[!h]
\begin{center}
{\includegraphics[width=0.48\textwidth]{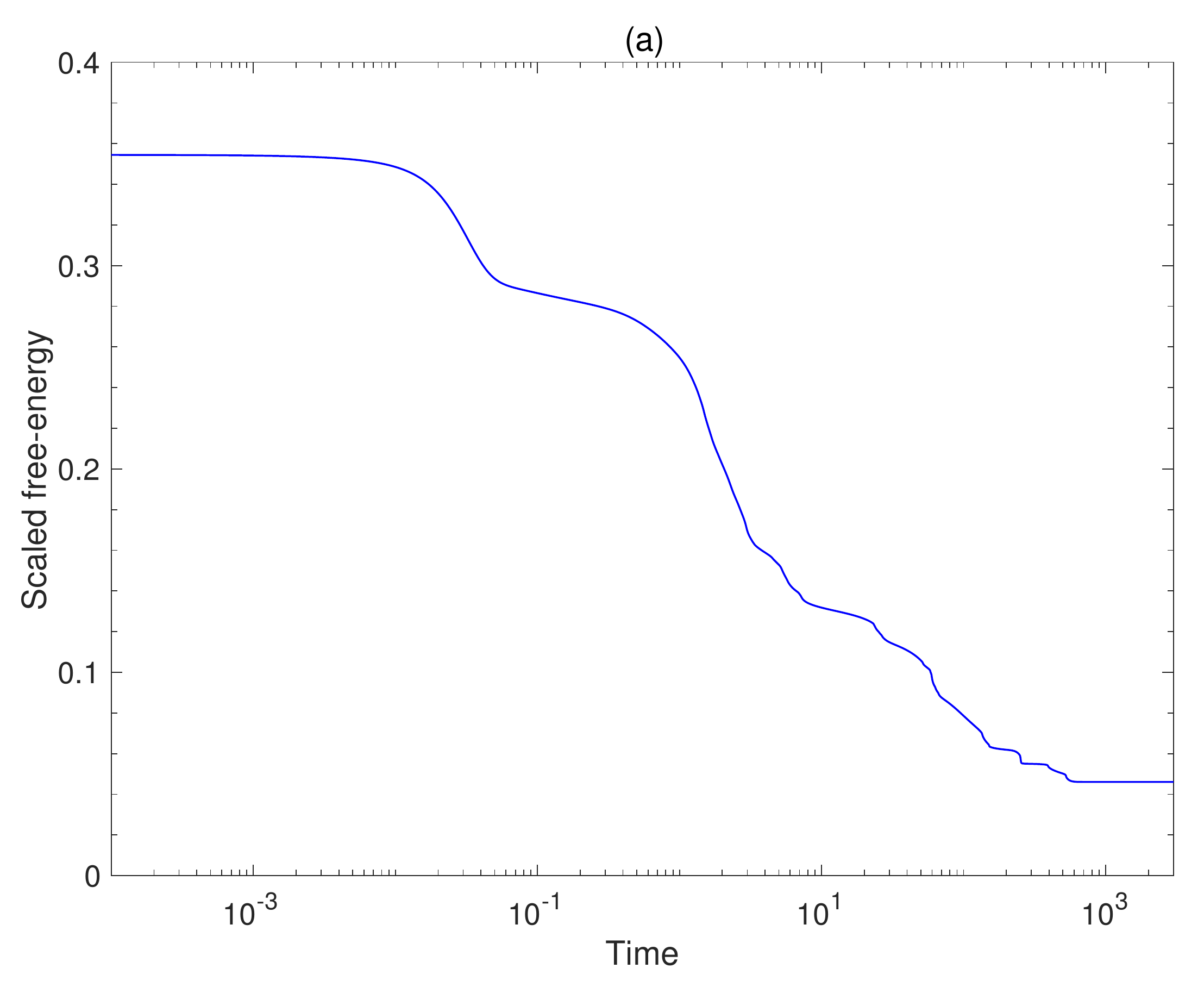}}
{\includegraphics[width=0.48\textwidth]{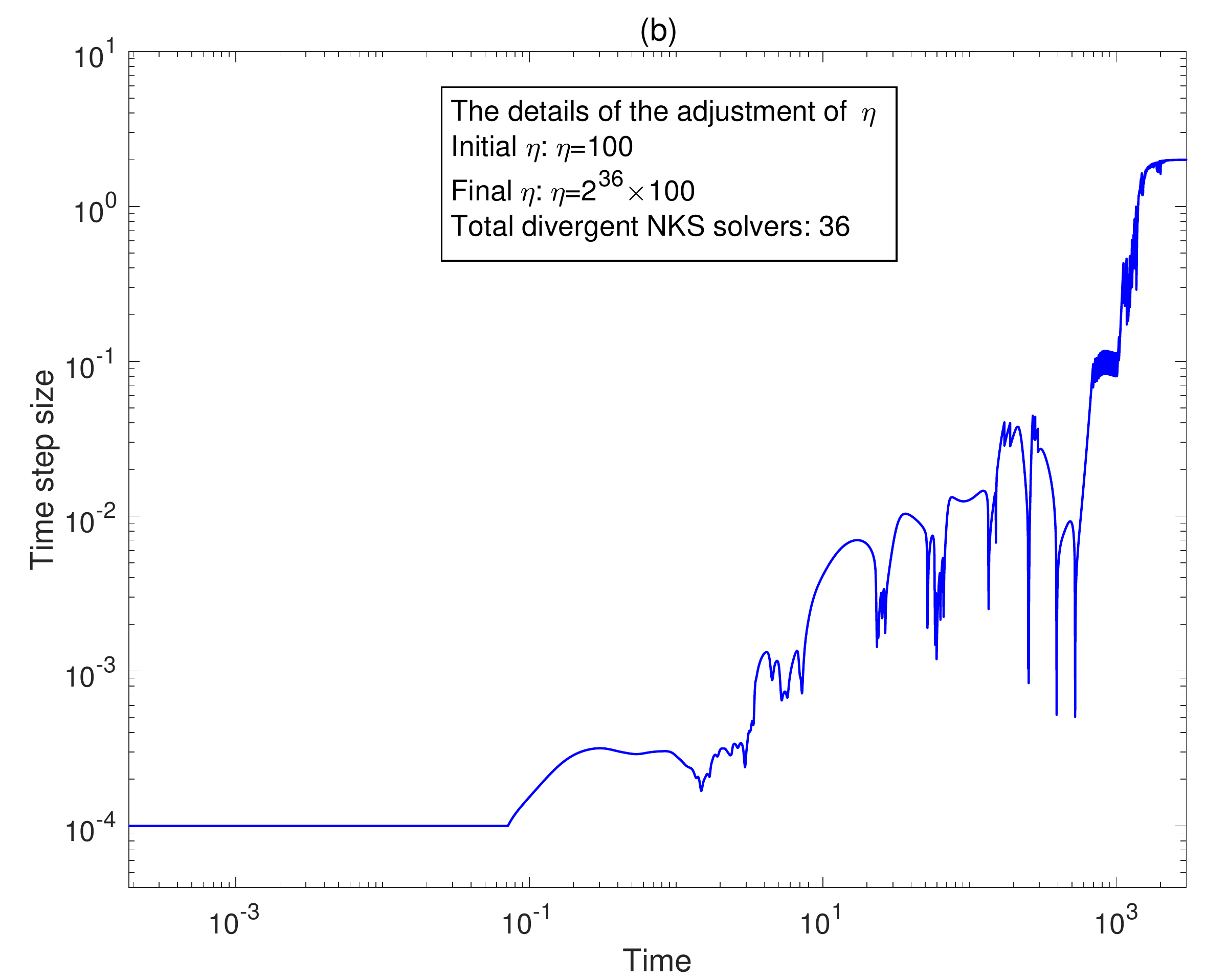}}
\end{center}
\caption{The total free-energy (a) and the history of the time step size (b) in the three dimensional test case. }
\label{ex3d:energy}    
\end{figure}

\begin{table}[!htb]
\caption{
The comparison between the adaptive time stepping strategy with an automatically adjusted $\eta$ and the adaptive time stepping strategy with a fixed $\eta=3,200$.
All test cases are done by using 720 processor cores.
}
\label{tab:comparison} 
\centering
\begin{tabular}{|c|cc|}
\hline
& Total number of divergent NKS solvers& Total compute time (s)\\
\hline
\hline
Fixed $\eta=3200$& 3383& 87,812.6\\

Adjusted $\eta$& 6& 46,422.4\\
\hline
\end{tabular}
\end{table}

\subsection{Performance tuning}
In this subsection, we focus on the parallel performance of the proposed algorithm. 
We run the  two  and three dimensional tests on a $128\times 128$  mesh and a $128 \times 128 \times 128$ mesh with 24 and 512 processors, 
respectively.
To accurately analyze the parallel performance,  we only run  the first $10$ time steps
with a fixed 
time step size $\Delta t =10^{-4}$. 
Homogeneous 
Neumann boundary conditions are applied in all tests.

First, we examine the influence of different subdomain solvers by limiting the 
test to the classical AS preconditioner and fixing the overlapping size to $\delta=1$. 
The ILU factorizations with $0$, $1$, and $2$ levels of fill-in 
and LU factorization are considered. 
The number of Newton iterations and the averaged number of GMRES iterations together with the total compute times are provided in Table~\ref{tab:subdomain}.
From Table~\ref{tab:subdomain}, we can see that 
the number of Newton iterations is insensitive to the subdomain solver.
In addition, the number of GMRES iterations can be reduced by increasing the fill-in level, 
but the total compute time keeps growing due to the increased cost of the subdomain 
solver. We find that the optimal choice in terms of the total compute time is the ILU(0) 
subdomain solver.
In the Newton method, the Jacobian matrices of the different Newton iterations  
have similar structures, so it is possible to save the compute time by 
only performing the subdomain matrix factorizations at the first step of the Newton iteration and reusing the factorized matrices
within the across different Newton iterations within the same time step. 
The results on applying the reuse strategy to the ILU(0) subdomain solver  are listed in the last column of Table~\ref{tab:subdomain}, which 
clearly shows that the reuse strategy can save nearly 20-30\% of the compute time.

\begin{table}[!htb]
\caption{
Performance of the NKS algorithm with different subdomain solvers.
}
\label{tab:subdomain} 
\centering
\begin{tabular}{|c|c|ccccc|}
\hline
\multicolumn{2}{|c|}{Subdomain solver} &ILU(0) &ILU(1) &ILU(2) &LU &ILU(0)-reuse \\
\hline\hline
\multirow{3}{*}{2D test} & Total Newton        &30            &30               &30              &30               &30  \\
                                    & GMRES/Newton  &10.56        &8.56            &8.53           &7.93            &11.7 \\
                                    &Total Time (s)        &3.17          &3.38            &3.81           &8.38            &2.57 \\
                                                                    
\hline
\multirow{3}{*}{3D test} & Total Newton          &20                &20                     &20                    &20                     &20  \\
                                    & GMRES/Newton      &21.45           &18                     &17                    &16.45                &21.6 \\
                                    & Total Time (s)           &74.88          &388.74              &1,442.51            &5,161.80           &53.90 \\
\hline
\end{tabular}
\end{table}

We then investigate the performance of the NKS solver by changing the type of the AS 
preconditioner and the overlapping factor $\delta$. The number of processor cores and 
the mesh size for the two test cases 
are the same with the previous simulations. Based on the previous report, we take 
the ILU(0)-reuse as the subdomain solver throughout the test cases. 
The classical-AS, the left-RAS, and the right-RAS preconditioners with overlapping size $\delta=0,\, 1,\, 2$ 
 are considered. 
The number of Newton iterations, the averaged number of GMRES iterations, and the total compute 
times are listed in Table \ref{tab:overlap}. From Table \ref{tab:overlap}, we can again see that the 
number of Newton iterations is insensitive to the type of the AS preconditioner and the overlapping 
size. We also conclude that the left-RAS and right-RAS preconditioners are superior to 
the classical-AS preconditioner for the two test cases. Moreover, the left-RAS and right-RAS 
preconditioners have  almost the same performance in terms of both the averaged number of GMRES iterations and the compute times. And the minimal compute time 
is achieved when $\delta=1$ for both the two and  three dimensional test cases. 

\begin{table*}[!hbt]
\caption{\upshape
Performance of NKS solver with respect to the types of preconditioner and overlapping sizes. }
\label{tab:overlap} 
\centering
\begin{tabular}{|c|c|ccc|cc|cc|}
\hline
\multicolumn{2}{|c|}{Type of preconditioner}
&\multicolumn{3}{|c|}{classical-AS}
&\multicolumn{2}{|c|}{left-RAS}
&\multicolumn{2}{|c|}{right-RAS}\\
\hline
\multicolumn{2}{|c|}{$\delta$}  &$0$ &$1$ &$2$ &$1$ &$2$   &$1$ &$2$   \\
\hline\hline
\multirow{3}{*}{2D test} & Total Newton        &30              &30         &30           &30             &30              &30            &30       \\
                                    & GMRES/Newton  &21.67          &11.7       &13.17      &6.8            &6.2             &6.83        &6.27     \\
                                    &Total Time (s)        &2.90            &2.57       &2.86        &2.15         &2.21            &2.16         &2.22   \\
\hline
\multirow{3}{*}{3D test} & Total Newton          &20                 &20                 &20              &20             &20               &20         &20      \\
                                    & GMRES/Newton      &43.15            &21.55            &25.1           &11             &10.5            &11         &10.5     \\
                                    & Total Time (s)           &41.88          &43.14             &67.23          &34.45         &50.98        &34.54   &51.06  \\
\hline
\end{tabular}
\end{table*}

\subsection{Weak and strong scaling tests}
In this subsection, we study the weak scalability and strong scalability of the proposed method. 
Based on the observations in the above subsection, we use the left-RAS 
preconditioner with the overlapping factor $\delta=1$ and employ the ILU(0) factorization 
with the reuse strategy as the subdomain solver in all simulations.
To accurately analyze the parallel performance,  we only run  the first $10$ time steps
with a fixed 
time step size $\Delta t =10^{-4}$. 
We first test the weak scalability of the proposed method, in which the subdomain with 
a fixed mesh size is handled by one processor core. 
In the two dimensional  test, the number of processor cores is gradually changed  from $8$ to $512$
with each processor core corresponding to a subdomain of $64\times 32 $ physical grid points.
In the three dimensional test, we set the number of processor cores in a range of $[16,\, 8,192]$, as the physical grid 
points are correspondingly changes   
from $64^3$ to $512^3$. 
 The numbers of Newton and GMRES iterations together 
with the total compute time are provided in Table \ref{tab:ws2d} for the two and three dimensional tests, respectively. The numerical results reported in Table \ref{tab:ws2d} show
 that  the averaged number of  GMRES iterations and the total 
compute time increase slowly as more processors are used for the two test cases. 
The good weak scalability of our method is validated by the simulation.

\begin{table}[!htb]
\caption{\upshape
The weak scalability of the proposed method.
}
\label{tab:ws2d} 
\centering
\begin{tabular}{|c|c|cccc|}
\hline
\multirow{5}{*}{2D test} & Mesh size   & $128^2$ & $256^2$ & $512^2$ & $1,024^2$ \\
                                      &Number of processors  & 8 & 32 & 128 & 512 \\
\cline{2-6}
  &Total Newton                  &30               &30              &30               &30  \\
  &GMRES/Newton            &6.43          &6.83            &7.07                &7.4 \\
  &Total Time (s)                 &5.46           &5.65             &5.68                &6.05 \\
                                                                    
\hline
\hline
\multirow{5}{*}{3D test} & Mesh size & $64^3$ & $128^3$ & $256^3$ & $512^3$ \\
                                     & Number of processors  &16 & 128 & 1,024 & 8,192 \\
\cline{2-6}
& Total Newton                   &20               &20              &20               &20  \\
& GMRES/Newton             &10.9            &11             &11                &11 \\
& Total Time (s)                  &140.06       &156.89       &162.76      &196.33 \\
                                                                    
\hline
\end{tabular}
\end{table}

To study the strong scalability, we run the two  and three dimensional tests 
on a $2,048^2$  mesh and a $512^3$ mesh by increasing the number of processor cores, respectively. 
As reported in Table~\ref{tab:scalability3d}, the number of nonlinear iterations is unchanged  
and the average number of linear iterations increases slightly as the number of used processor core increases. 
As shown in Fig.~\ref{scalability2d}, the total compute time decreases almost by half as the number of processor cores doubles, clearly demontrating
 a good strong parallel efficiency of the proposed algorithm. 
\begin{figure}[!th]
\begin{center}
{\includegraphics[width=0.48\textwidth]{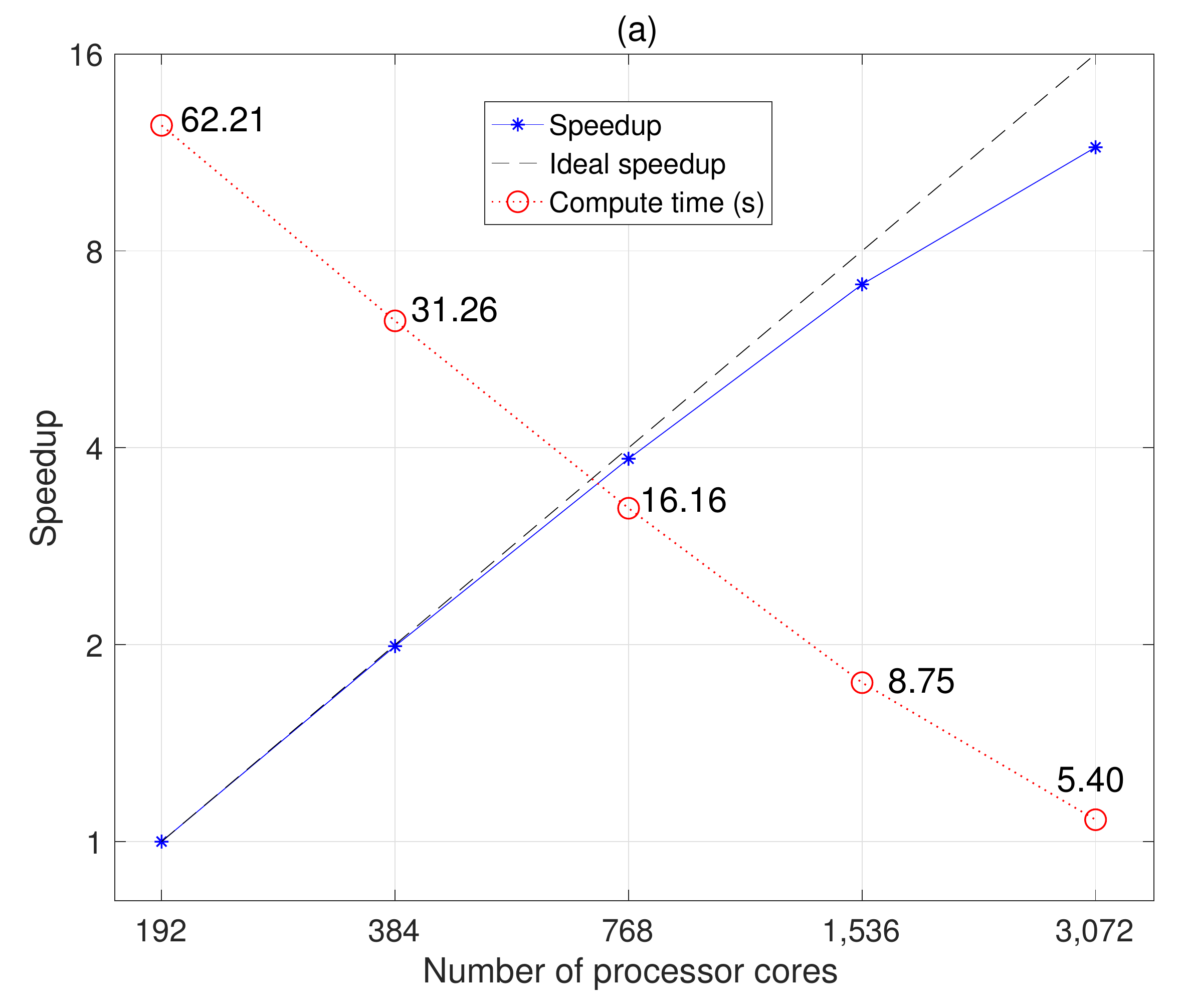}}
{\includegraphics[width=0.48\textwidth]{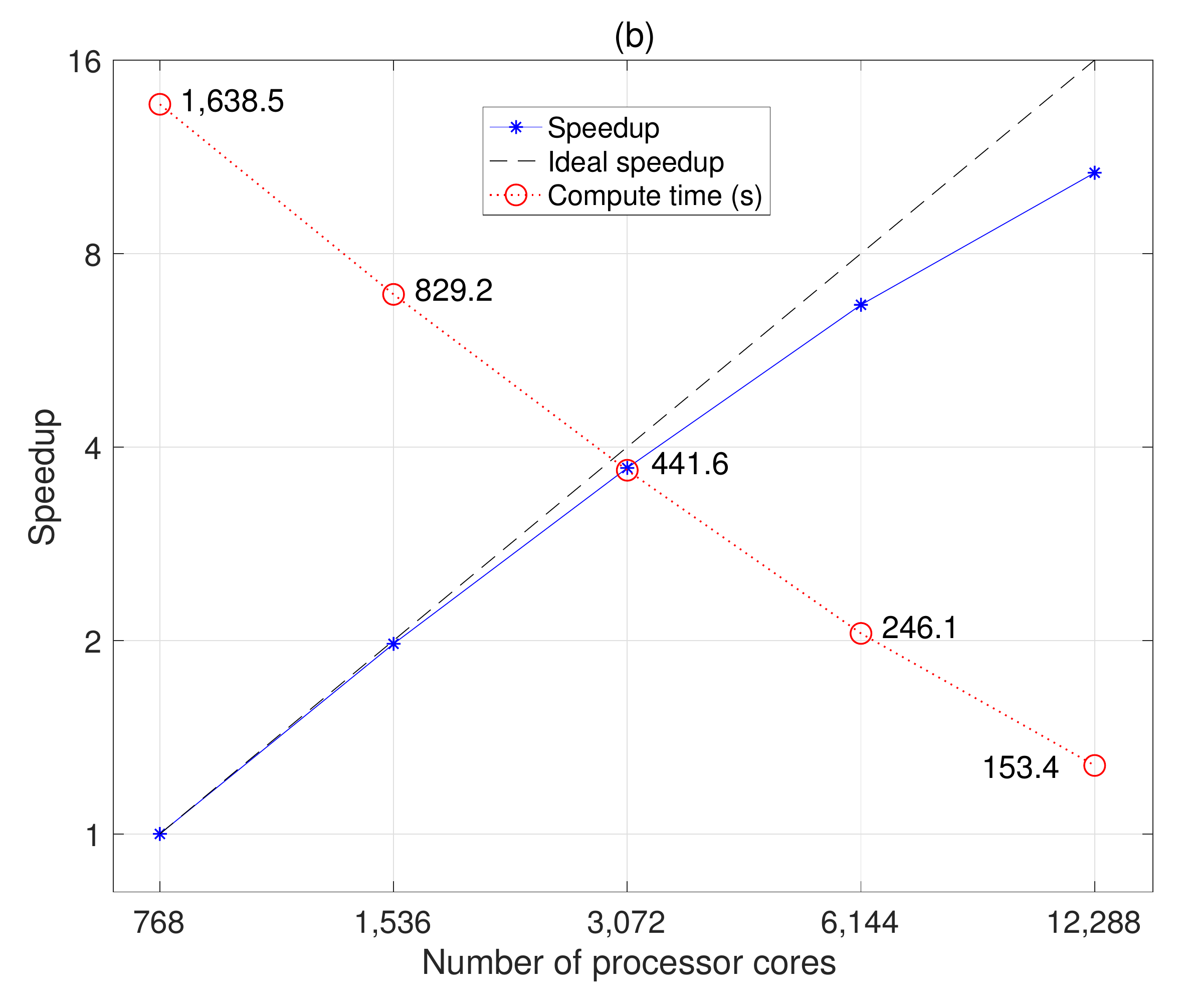}}
\end{center}
\caption{
The total compute time and the strong scalability: (a) the 2D test case, (b) the 3D test case. }
\label{scalability2d}
\end{figure}

\begin{table}[!htb]
\caption{
Performance of the NKS solver with different numbers of processor cores. Here ``NP" denotes the number of processor cores.
}
\label{tab:scalability3d} 
\centering
\begin{tabular}{|c|c|ccccc|}
\hline
 \multirow{3}{*}{2D test}&NP   & 192 & 384 & 768 &1,536 &3,072   \\
\cline{2-7}
&Total Newton        &30       &30        &30        &30           &30  \\
&GMRES/Newton  &7.1  &7.1    &7.27   &7.43     &7.67 \\
\hline
\hline
 \multirow{3}{*}{3D test}& NP & 768 &1,536 &3,072 &6,144 &12,288  \\
\cline{2-7}
&Total Newton        &20       &20        &20        &20           &20  \\
&GMRES/Newton  &10.55  &10.8     &11        &11        &11      \\
\hline
\noalign{\smallskip}
\end{tabular}
\end{table}

\section{Conclusion}
In this paper, an energy stable finite difference scheme is proposed for the coupled
Allen--Cahn/Cahn--Hilliard system.
To deal with the  logarithmic function in the total free energy,
a Taylor expansion approximation is applied to improve the numerical stability and accuracy.
We then prove that the proposed scheme is unconditionally stable and obeys the energy dissipative law.  For long time simulations, an adaptive time
stepping strategy with an automatically adjusted parameter is successfully incorporated into the energy stable
scheme such that the time step size is controlled based on the state of
solution. The nonlinear system constructed by the discretization of the AC/CH system
at each time step is solved by the NKS method. The accuracy and utility of the proposed
method is validated by several two and three dimensional test cases. Large-scale
numerical experiments show that the proposed algorithm enjoys good weak and strong scalability up to
 ten thousands processor cores on
the Sunway TaihuLight supercomputer.
We remark that  the proposed scheme for Allen--Cahn/Cahn--Hilliard system can be generalized to a much broader range of phase field equations
\color{black} with complex computational domain \color{black}
and will conduct further studies in a forthcoming paper.
\color{black}
It should also be noticed that, 
due to the possible absence of \eqref{summation}, \color{black} it is less straightforward to prove the unconditional energy stability \color{black} of the scheme 
for domains with curved geometries.   
\color{black}

\section*{Acknowledgment}

This work was supported in part by NSFC  11871069, Beijing Natural Science Foundation JQ18001, the Strategic Priority Research Program of the Chinese Academy of Sciences XDB22020100, and Beijing Academy of Artificial Intelligence.

\end{document}